\documentclass[a4paper,11pt,abstract=on]{scrartcl}

\usepackage[T2A]{fontenc}
\usepackage[utf8]{inputenc}
\usepackage[russian,english]{babel}

\usepackage{graphicx}%
\usepackage{multirow}%
\usepackage{amsmath,amssymb,amsfonts}%
\usepackage{amsthm}%
\usepackage{mathrsfs}%
\usepackage[title]{appendix}%
\usepackage{xcolor}%
\usepackage{textcomp}%
\usepackage{manyfoot}%
\usepackage{booktabs}%
\usepackage{algorithm}%
\usepackage{algorithmicx}%
\usepackage{algpseudocode}%
\usepackage{listings}%
\usepackage{hyperref}
\usepackage{caption}
\allowdisplaybreaks

\usepackage[
backend=biber,
style=numeric-comp,
giveninits,
]{biblatex}
\usepackage{mathtools}
\usepackage{enumitem}
\usepackage{bbm}
\usepackage{nicefrac}
\usepackage{threeparttable}
\usepackage{tikz}
\usepackage{esvect}

\newcommand{\rm}{\mathrm}

\newcommand{\linegray}{%
  \tikz[baseline=-0.75ex] \draw[-, gray, very thick] (0,0) -- (1,0);%
}
\newcommand{\linegraydashed}{%
  \tikz[baseline=-0.75ex] \draw[dashed, gray, very thick] (0,0) -- (1,0);%
}
\newcommand{\linegraydotted}{%
  \tikz[baseline=-0.75ex] \draw[dotted, gray, very thick] (0,0) -- (1,0);%
}

\DeclareMathOperator{\sign}{sign}
\DeclareMathOperator{\diag}{diag}
\DeclareMathOperator*{\argmax}{argmax}

\DeclareMathOperator*{\spec}{spec}
\DeclareMathOperator*{\dist}{dist}
\DeclareMathOperator*{\trace}{tr}
\DeclareMathOperator*{\tT}{T}
\DeclareMathOperator*{\rank}{rank}
\DeclareMathOperator*{\spa}{span}
\DeclareMathOperator*{\grad}{grad}
\DeclareMathOperator*{\hess}{Hess}

\newcommand{\symA}{A^{\textup{H}}}
\newcommand{\symC}{C^{\textup{H}}}

\newcommand{\scp}[2]{\left\langle{#1}, {#2}\right\rangle}

\newcommand{\bb}{\mathbb}
\newcommand{\mc}{\mathcal}
\newcommand{\mf}{\mathfrak}
\DeclarePairedDelimiter{\abs}{\vert}{\vert}

\DeclarePairedDelimiter{\norm}{\|}{\|}

\newtheorem{theorem}{Theorem}[section]
\newtheorem{proposition}{Proposition}[section]
\newtheorem{corollary}{Corollary}[section]
\newtheorem{lemma}[theorem]{Lemma}
\newtheorem{remark}{Remark}[section]

\begin{document}

\title{Generalization of Zeroth-Order Method for Quotients of Quadratic Functions}

\author{Jonas Bresch\thanks{Technische Universität Berlin, Straße des 17. Juni 136, Berlin, 10587, Germany} \\
{\footnotesize\href{mailto:bresch@math.tu-berlin.de}{bresch@math.tu-berlin.de}}
}

\maketitle

\begin{abstract}
Optimization of quadratic functions 
and their quotients 
is relevant in subspace and iterative optimization methods.
In this paper,
we consider the matrix-free computation
of the generalized operator norm 
and the maximization of a generalized Rayleigh quotient 
when only forward evaluations of
two linear operators $A$ and $B$ are available.
The proposed method samples search directions
uniformly from the full unit sphere, thereby avoiding tangent-space sampling and
explicit access to the metric matrix $B^{\tT}B$. 
The exact line search along each sampled direction 
reduces to a $2\times2$ generalized eigenvalue problem on specific Gram matrices.
In the generic case its maximizing step has a closed form.
We prove that the objective
values converge almost surely to the largest generalized eigenvalue and that the
distance of the iterates to the leading generalized eigenspace converges to zero.
We also relate full-sphere moment estimators to the Riemannian gradient and Hessian.
The numerical experiments on synthetic Gaussian operators 
illustrate the behavior of the methods, 
and further studies of the proposed algorithms 
indicate favorable empirical convergence 
and weak dependence on the tested problem dimensions.
\end{abstract}

\textbf{Keywords.}
operator norm $\cdot$
Rayleigh quotient $\cdot$
Riemannian optimization $\cdot$
zeroth-order optimization $\cdot$
matrix-free methods $\cdot$
randomized line search $\cdot$
Riemannian manifold $\cdot$
generalized sphere

\textbf{MSC.}
65F35 $\cdot$
15A60 $\cdot$
68W20


\section{Introduction}
\label{sec: intro}

The computation of extremal spectral quantities of linear operators under restricted access models 
remains a fundamental challenge in applied mathematics, scientific computing, and data analysis. 
Quantities such as the operator norm, generalized Rayleigh quotients, numerical radius, 
and pseudospectral measures govern stability, transient amplification, and admissible step sizes 
in optimization and inverse problems 
\cite{TikArs43,TikArs77,trefethen2005spectrapseudospectra,ryu2022large,chambolle2011first,bredies2022degenerate}. 
In particular, for non-normal operators, eigenvalues alone fail to capture transient growth phenomena, 
making Rayleigh-type extremal formulations and pseudospectral diagnostics indispensable 
\cite{trefethen1999spectrapseudospectra,farrell1996generalizedstabilitytheorypartIInonautonomousoperators}.

A central object in this context is the generalized Rayleigh quotient,
which generalizes the operator norm and appears in a variety of applications. 
In statistics and machine learning, it underlies \textit{linear discriminant analysis} (LDA) 
and \textit{signal-to-noise ratio} (SNR) optimization, 
where between-class and within-class covariance matrices 
define operators \(A\) and \(B\) \cite{fisher1936taxonomic,rao1948bioclassification}. 
In signal processing, related formulations arise in beamforming and detection theory 
\cite{vantrees2002optimumarray}.
In inverse problems and PDE-constrained optimization, generalized Rayleigh quotients govern stability 
constants and convergence rates when weighted norms or preconditioners are involved 
\cite{engel2001aemigroups,lorenz2023chambolle}. 
Despite their importance, computing these quantities becomes highly nontrivial 
when only black-box access to the operators is available.

\paragraph{Adjoint-free and Matrix-free Constraints.}
In many modern applications, linear maps \(A : \bb R^d \to \bb R^m\) and \(B : \bb R^d \to \bb R^\ell\) 
are accessible only through forward evaluations. 
This setting arises, for example, in large-scale PDE solvers, time-stepping schemes, 
tomographic forward models, and legacy simulation codes 
\cite{benner2020operatorinference,Savanier2021ProximalGA,gunzburger2002perspectives,lorenz2018randomized}. 
In such scenarios, forming matrices, computing adjoints, or inverting operators is infeasible. 
Moreover, discretized adjoints may be inconsistent with the forward operator, 
leading to unreliable results in classical adjoint-based schemes 
\cite{trefethen2005spectrapseudospectra,huber2025convergence,fikl2016comprehensive,luchini2014adjointequation,luchini2024adjointequation}. 
These constraints rule out standard methods such as power-schemes, 
Lanczos procedures, and Krylov subspace methods, 
which rely on repeated adjoint applications or dense linear algebra 
\cite{krylov1921,lanczos1950,parlett1998symmetriceigenvalueproblem,knyazev2001matrixfreekrylovpcgmethod}.
Relying on inexact adjoints \cite[§~1]{chan1998transpose} 
for reducing numerical complexity \cite[§~2.9]{gunzburger2002perspectives}
in the computations \cite[§~28~\&~48,~p.~446]{trefethen2005spectrapseudospectra}
introduces the so-called adjoint mismatch
as studied in detail in \cite[§~1.1]{B8}.

This has motivated the development of \textit{zeroth-order} and \textit{derivative-free} methods, 
which use only function evaluations and random perturbations 
\cite{burke2002optimizingmatrixstability,li2023stochastic,balasubramanian2022zeroth}. 
Recent approaches combine random search, sketching, and consensus-based dynamics 
to approximate spectral quantities in a matrix-free fashion 
\cite{halko2011randomapproximatematrixdecomposition,riedl2024consensusbasedoptimization}. 
However, existing methods and novel, substantially improved algorithms \cite{B8,B9,B12}
for the operator norm, generalized Rayleigh quotient, and the adjoint mismatch, 
often rely on tangent-space sampling on manifolds, 
require access to adjoint operators or metric information, 
or employ heuristic curvature approximations.

\paragraph{Unconstrained Sampling and Zeroth-Order Quasi-Newton Methods.}
This work introduces a fundamentally different approach based on 
sampling on the entire unit sphere. 
Therefore, it can be seen as an unconstrained sampling approach compared to \cite{B8,B9,B12}.
Instead of drawing search directions from tangent spaces of a Riemannian manifold, 
we sample directions globally on the unit sphere,
which removes the need for conditional sampling or metric-dependent constructions. 
This modification decouples the sampling law from the operators and simplifies
implementation. The resulting moment identities provide gradient and curvature
surrogates with known scaling. The one-sided finite-difference gradient estimator
in §~\ref{sec: zeroth order} has an $\mathcal O(\mu)$ smoothing bias; it is therefore
not unbiased for a fixed smoothing radius. By contrast, the quadratic-form moment
identity in §~\ref{sec: estimators} gives an exactly unbiased estimator, up to an
explicit deterministic scaling, of the gradient-inducing vector 
and Hessian-inducing matrix for the quadratic quotient.
Building on this sampling strategy, we develop a zeroth-order optimization framework
for generalized Rayleigh quotient maximization that uses only forward evaluations.
The first component is a randomized direction coupled with an exact projective line
search. 
The second is a regularized quasi-Newton variant based on stochastic
curvature information. 
Unlike classical quasi-Newton methods on secant conditions \cite{gill1972quasinewton,Antoniou2007}, 
the curvature surrogate is formed
directly from quadratic function evaluations. 

\paragraph{Relevance.}
The improved methods \cite{B8,B9,B12} already
blend random search \cite{li2023stochastic,balasubramanian2022zeroth}, 
sketching \cite{li2014sketching}, 
slicing \cite{quellmalz2023slicing,quellmalz2024slicing}, 
projection \cite{li2023stochastic,boumal2023introduction}, 
and consensus-style \cite{riedl2024consensusbasedoptimization} updates 
to balance implementability for legacy and black-box codes 
with provable approximation behavior and empirical robustness 
in non-normal and ill-conditioned settings. 
The proposed generalization and extension 
for zeroth-order and derivative-free,
as well as curvature-based methods 
provide a viable path to reliable operator norms $\norm{A}$,
extremal Rayleigh quotient $\mc R(A,B)$, where $d = m = \ell$,
and generalized operator norm $\norm{A / B}$ diagnostics, 
and may also contribute to pseudospectral analysis.

\paragraph{Contributions, Roadmap, and Scope.}
The paper makes four specific contributions. 
First, it replaces metric-dependent
tangent sampling by full-sphere sampling while retaining a forward-only model 
in §~\ref{sec: preliminaries}.
Second, 
it identifies each exact line-search problem with a $2\times2$ generalized
Rayleigh quotient and gives the maximizing root in the generic case
in §~\ref{sec: construction}. 
Third,
it proves almost-sure convergence of the objective values to the leading generalized
eigenvalue and in distance to the leading generalized eigenspace
in §~\ref{sec: as optimal convergence}.
Fourth, 
it derives moment-based gradient and Hessian surrogates 
and provides a well-posed regularized implementation of the curvature-based method
in §~\ref{sec: estimators}. 
Last, 
in §~\ref{sec: num} proof-of-concept results,
qualitative comparisons with methods from the literature \cite{li2023stochastic,balasubramanian2022zeroth},
and real-world inspired \emph{signal noise-to-ratio} (SNR) computation
extend the theory to the complex setting 
and underline the success of the proposed methods in comparison.

\section{Preliminaries}
\label{sec: preliminaries}

In this section,
we introduce the considered problem of the generalized operator norm,
link it to the extremal generalized Rayleigh quotient,
and introduce tools for zeroth-, first-, and second-order 
Riemannian optimization techniques.

\subsection{Optimization Problem}
\label{sec: optimization problem}

The optimization problem 
\vspace{-5pt}
\begin{equation}
    \label{eq: prob}
    \norm{A/B} \coloneqq \max_{v \in \bb R^{d} \setminus \{0\}} \;
    \frac{\norm{A v}}{\norm{B v}}
\end{equation}
for two rectangular matrices $A \in \bb R^{m \times d}$
and $B \in \bb R^{\ell \times d}$,
which is considered in this paper,
is a generalization of the operator norm 
$\norm{A} \coloneqq \norm{A / I_d} = \max_{v \in \bb R^d \setminus \{0\}}
 \nicefrac{\norm{A v}}{\norm{v}}$
where $B = I_d$ is chosen.
Here, we have to guarantee that $B^{\tT}B \succ 0$,
so that \eqref{eq: prob} is well defined.
Hence, $\ell \geq d$ is a necessary assumption 
on the dimension of the space of the domain and range of $B$.
The sufficient condition on $B$
is equivalently given by $\det(B^{\tT}B) \neq 0$,
$\rank(B) = d$, or $\ker(B) = \{0\}$,
whenever $\ell \geq d$.
Considering the (generalized) spheres 
given by 
\begin{equation*}
    \label{eq: spheres}
    \bb S_D^{d-1} \coloneqq \{v \in \bb R^d \mid \norm{v}_D \coloneqq \sqrt{\scp{v}{D v}} = 1\},
    \quad \text{where} \quad 
    D \succ 0,
\end{equation*}
and we write for the unit sphere,
i.e. for $D = I_d$, $\bb S^{d-1} \coloneqq \bb S_{I_d}^{d-1}$,
yields 
\begin{equation}
    \label{eq: prop sphere gen}
    \norm{A/B}^2
    = \!\!\!\max_{v \in \bb R^d \setminus\{0\}}
    \frac{\scp{v}{A^{\tT}A v}}{\scp{v}{B^{\tT}B v}}
    = \max_{v \in \bb S^{d-1}}
    \frac{\scp{v}{A^{\tT}A v}}{\scp{v}{B^{\tT}B v}}
    = \!\!\max_{v \in \bb S_{B^{\tT}B}^{d-1}}
    \scp{v}{A^{\tT}A v}.
\end{equation}
The latter is due to the scale invariance of the considered (squared) objective
\vspace{-5pt}
\begin{equation}
    \label{eq: obj}
    f(v) 
    \coloneqq 
    \frac{\scp{v}{A^{\tT}A v}}{\scp{v}{B^{\tT}B v}}
    = \frac{\norm{A v}^2}{\norm{B v}^2}
    \vspace{-5pt}
\end{equation}
from \eqref{eq: prob}, respectively \eqref{eq: prop sphere gen}.
The function in \eqref{eq: obj}
is also known as the generalized Rayleigh quotient of $(A^{\tT}A, B^{\tT}B)$.
Furthermore, 
\eqref{eq: prob} is a generalization 
of the extremal generalized Rayleigh quotient computation 
\begin{equation}
    \label{eq: gen rayleigh}
    \mc R(C, D) \coloneqq 
    \max_{v \in \bb R^d \setminus\{0\}} 
    \smash{\frac{\scp{v}{C v}}{ \scp{v}{D v}}}
    = \max_{v \in \bb S_{D}^{d-1}} 
    \scp{v}{C v},
\end{equation}
where $C \in \bb R^{d \times d}$ is an arbitrary, quadratic matrix 
and $D \in \mathcal S_+^d$ is Hermitian positive definite,
considered in \cite{B9}.
Here, we denote
\begin{align*}
    \mc S_+^d \coloneqq 
    &\,\{X \in \mc S^d \mid \scp{v}{X v} > 0 \; \forall v \in \bb R^{d} \setminus\{0\}\} 
    \subset 
    \mc S^d \coloneqq \{X \in \bb R^{d \times d} \mid X = X^{\tT}\}.
\end{align*}
More precisely,
we have the following relation 
$\norm{A/ B}^2
= \mc R(A^{\tT}A, B^{\tT}B)$,
if 
$B^{\tT}B \succ 0$.
If we are not aware of the adjoint $B^{\tT}$ of $B$ \cite{chan1998transpose,trefethen2005spectrapseudospectra,gunzburger2002perspectives,B8}
then the computation and construction for solving 
the extremal generalized Rayleigh quotient
and hence our considered problem \eqref{eq: prob} 
via the generalized $B^{\tT}B$-sphere, i.e. $\bb S_{B^{\tT}B}^{d-1}$,
from \cite{B9} cannot be applied.
Therefore,
we focus on the optimization problem 
\begin{equation}
    \label{eq: prob sphere}
    \norm{A / B}^2 
    = \max_{v \in \bb S^{d-1}} \;
    f(v) 
    = \max_{v \in \bb S^{d-1}} \;
    \frac{\norm{A v}^2}{\norm{B v}^2},
\end{equation}
where we are also interested in the set of maximizers.

\subsection{Riemannian Optimization Techniques}
\label{sec: riemannian optimization}

The problem \eqref{eq: prop sphere gen}
could be solved by Riemannian optimization methods \cite{boumal2023introduction}
via the generalized $B^{\tT}B$-sphere.
Therefore,
we would rely on the tangent space of the (generalized) sphere $\bb S_D^{d-1}$ 
at some sphere point $v \in \bb S_D^{d-1}$ given by
\begin{equation*}
    \label{eq: tangent space}
    \bb T_{Dv} \coloneqq \ker(D h_C[v])
    = \{x \in \bb R^d \mid \scp{x}{D v} = 0\}
    = \spa\{Dv\}^\perp
    \quad \text{for} \quad
    v \in \bb S_D^{d-1},
\end{equation*}
where $h_D(v) \coloneqq \scp{v}{D v} - 1$
is the so-called local defining function.
Hence $\bb S_D^{d-1} = h_D^{-1}(\{0\})$
is a smooth, embedded, $(d-1)$-dimensional Riemannian manifold.
Here, 
for solving \eqref{eq: prob sphere},
we rely on $D = I_d$ 
such that $\bb T_v = \{x \in \bb R^d \mid \scp{x}{v} = 0\} = \spa\{v\}^\perp$
for $v \in \bb S^{d-1}$.
The orthogonal projection onto the tangent space 
for the generalized sphere
is then defined by 
\begin{equation*}
    \label{eq: orth proj}
    P_{Dv} : \bb R^d \to \bb T_{Dv},
    \quad x \mapsto \Bigl(I_d - \tfrac{Dv (Dv)^{\tT}}{\norm{Dv}^2}\Bigr) x
    \quad \text{for} \quad
    v \in \bb S_D^{d-1},
\end{equation*}
and simplifies for \eqref{eq: prob sphere}
to $P_v (x) = (I_d - vv^{\tT}) x$ for $v \in \bb S^{d-1}$.
We further rely on the retraction 
at some sphere point $v \in \bb S_D^{d-1}$
defined by 
\begin{equation}
    \label{eq: retraction}
    R_{Dv} : \bb R^d \to \bb S_D^{d-1},
    \quad x \mapsto \tfrac{v + x}{\norm{v + x}_D^2}.
\end{equation}
and simplifies to $R_{Dv}(x) = \nicefrac{v + x}{\sqrt{1 + \norm{x}_D^2}}$ 
if $x \in \bb T_{Dv}$.
For solving
the optimization problem \eqref{eq: prob sphere},
two possible iterative approaches are
Riemannian first-order \cite{boumal2023introduction}
and zeroth-order Riemannian ascent methods \cite{li2023stochastic}.
We roughly outline them in the following.

\subsubsection{First-Order Riemannian Ascent Methods}
\label{sec: first order}

The general construction 
of first-order Riemannian gradient ascent methods \cite{boumal2023introduction}
for solving \eqref{eq: prob sphere}
utilizes the so-called Riemannian gradient of 
\eqref{eq: obj}
given by 
\begin{equation}
    \label{eq: riemannian gradient}
    \grad f(v) 
    = P_v \circ \nabla f (v),
    \quad v \in \bb S^{d-1},
\end{equation}
where $\nabla f(v)$ denotes the Euclidean gradient and reads as 
\begin{equation}
    \label{eq: euclidean grad}
    \nabla f(v) = \tfrac{2}{\norm{B v}^4}\bigl(\norm{B v}^2 A^{\tT}A v - \norm{A v}^2 B^{\tT}B v\bigr).
\end{equation}
Plugging \eqref{eq: euclidean grad} into \eqref{eq: riemannian gradient} yields 
$\grad f(v) = \nabla f(v)$ 
since $\nabla f(v) \in \spa\{v\}^\perp = \ker(x \mapsto (I_d - vv^{\tT}) x)$
such that $vv^{\tT} \nabla f(v) = 0 \in \bb R^d$.
The iterative algorithm breaks down to 
Alg.~\ref{alg: first order} in §~\ref{sec: alg}.
The critical points of \eqref{eq: prob sphere}
are therefore given by 
\begin{align}
    \label{eq: eigen}
    0 = \grad f(v)
    &\; \Leftrightarrow \; 
    \norm{B v}^2 A^{\tT}A v = \norm{A v}^2 B^{\tT}B v 
    \; \Leftrightarrow \; 
    (B^{\tT}B)^{-1}A^{\tT}A v = \tfrac{\norm{A v}^2}{\norm{B v}^2} v \notag \\
    &\; \Leftrightarrow \; 
    (B^{\tT}B)^{-1/2}A^{\tT}A (B^{\tT}B)^{-1/2} w = \tfrac{\norm{A (B^{\tT}B)^{-1/2} w}^2}{\norm{w}^2} w,
\end{align}
by a change of variables $(B^{\tT}B)^{1/2} v = w$.
Hence, the critical points 
are the eigenvectors 
of $(B^{\tT}B)^{-1/2}A^{\tT}A(B^{\tT}B)^{-1/2}$,
also known as generalized eigenvectors of $(A^{\tT}A, B^{\tT}B)$.
The generalized eigenspaces are defined by 
\begin{equation*}
    \label{eq: gen eigen space}
    G_\lambda \coloneqq \{v \in \bb R^d \setminus \{0\} \mid A^{\tT}A v = \lambda B^{\tT}B v\}
\end{equation*}
where $\lambda$ is a 
generalized eigenvalue given in the set of generalized eigenvalues 
\begin{align*}
    \spec((B^{\tT}B)^{-1/2}A^{\tT}A(B^{\tT}B)^{-1/2}) 
    &= \{\lambda \in \bb R \mid \exists v \in \bb S^{d-1} , A^{\tT}A v = \lambda B^{\tT}B v\} \\
    &= \{\lambda_1,...,\lambda_d\},
    \qquad \lambda_1 \geq \lambda_2 \geq ... \geq \lambda_d,
\end{align*}
by the symmetry of $(B^{\tT}B)^{-1/2}A^{\tT}A(B^{\tT}B)^{-1/2} \in \mc S^d$.
Therewith,
it is 
\begin{equation*}
    \eqref{eq: prob}^2
    = \eqref{eq: prob sphere} 
    = \lambda_1 
    = \max_{\lambda \in \spec((B^{\tT}B)^{-1/2}A^{\tT}A(B^{\tT}B)^{-1/2})} \lambda
\end{equation*}
and $\arg\eqref{eq: prob} \cap \bb S^{d-1} = G_{\lambda_1} \cap \bb S^{d-1}$.

\subsubsection{Zeroth-Order Riemannian Ascent Methods}
\label{sec: zeroth order}

To avoid applications of $A^{\tT}$ and $B^{\tT}$, 
one may replace the
Riemannian gradient \eqref{eq: riemannian gradient} by a finite-difference estimator. 
Let
\begin{equation}
    \label{eq: num grad}
    \widehat{\grad}_{1,\mu} f(v)
    \coloneqq \frac{f(R_v(\mu \tilde x)) - f(v)}{\mu}\,\tilde x,
    \qquad
    \tilde x=P_vx,
    \qquad x\sim\mc N(0,I_d).
\end{equation}
The smoothing radius $\mu>0$ controls the finite-difference bias; 
it does not, by itself, 
guarantee that every realization is an ascent direction. 
Since $\bb E[\tilde x\tilde x^{\tT}]=P_v$ and $P_v\nabla f(v)=\grad f(v)$, 
a Taylor expansion of $f\circ R_v$ gives
\begin{equation}
    \label{eq: num grad bias}
    \bb E[\widehat{\grad}_{1,\mu}f(v)]
    =\grad f(v)+\mathcal O(\mu),
    \qquad \mu\downarrow0,
\end{equation} 
where the remainder is uniform for $v\in\bb S^{d-1}$ because the sphere is compact
and $f$ is smooth under \smash{$B^{\tT}B\succ0$}. Thus the one-sided estimator is
asymptotically unbiased, but is not unbiased for fixed $\mu$.

For a sample count $q\in\bb N$, define the averaged estimator
\begin{equation}
    \label{eq: num grad red}
    \widehat{\grad}_{q,\mu} f(v)
    \coloneqq \tfrac{1}{q}\sum_{i=1}^{q}
    \tfrac{f(R_v(\mu\tilde x_i))-f(v)}{\mu}\,\tilde x_i,
    \quad
    \tilde x_i=P_vx_i,
    \quad x_i\overset{\mathrm{iid.}}{\sim}\mc N(0,I_d).
\end{equation} 
Independence yields
\smash{$\operatorname{Cov}[\widehat{\grad}_{q,\mu}f(v)] =\tfrac{1}{q}\operatorname{Cov}[\widehat{\grad}_{1,\mu}f(v)]$}
the exact covariance reduction,
and hence the trace of the covariance decreases by the factor $\tfrac{1}{q}$. The
corresponding zeroth-order ascent method is summarized in
Alg.~\ref{alg: zeroth order} in Appendix~\ref{sec: alg}.

\subsubsection{Second-Order Riemannian Ascent Methods}
\label{sec: second order}

For capturing even more information concerning the curvature of the considered objective,
one might use the Hessian and Riemannian Hessian.
Here, for the function of interest $f$, cf.~\eqref{eq: obj},
we have for $v \in \bb R^d$ that
\begin{align}
    \label{eq: hess special}
    \begin{aligned}
    \nabla^2 f(v) = \tfrac{2}{\norm{B v}^2}\Bigl[
        &(A^{\tT}A - f(v) B^{\tT}B) + \tfrac{4}{\norm{B v}^4} (B^{\tT}B v)(B^{\tT}B v)^{\tT} \\
        &\quad- \tfrac{2}{\norm{B v}^2}\bigl[(A^{\tT}A v)(B^{\tT}B v)^{\tT} + (B^{\tT}B v)(A^{\tT}A v)^{\tT}\bigr] \Bigr]
    \end{aligned}
\end{align}
for the (Euclidean) Hessian 
and the Riemannian Hessian 
is given \cite{boumal2023introduction} by
\begin{equation}
    \label{eq: riemannian hess}
    \hess f(v) = P_v \circ \nabla^2 f(v) \circ P_v,
    \qquad v \in \bb S^{d-1}.
\end{equation}
Therefore, plugging in \eqref{eq: hess special},
we obtain the simplification of \eqref{eq: riemannian hess} for \eqref{eq: obj} as
\begin{equation}
    \label{eq: riemannian hess special}
    \hess f(v) = \tfrac{2}{\norm{B v}^2} P_v (\underbracket{A^{\tT}A - f(v) B^{\tT}B}_{=:\,H(v)\,\in\,\mc S^d}) P_v,
    \qquad v \in \bb S^{d-1},
\end{equation}
since all remaining terms are in $\spa\{v\}$ 
and therefore in the kernel of $P_v$ 
so that the products with those terms vanish. 
We denote by $H(v)$
the inducing matrix of \eqref{eq: riemannian hess special}.

\subsection{Properties of the Objective}
\label{sec: prop obj}

In this subsection 
we summarize some properties of the considered (squared) objective \eqref{eq: obj}
for the optimization problem \eqref{eq: prob sphere}, respectively \eqref{eq: prob}.

\begin{lemma}
    \label{lem: lipschitz func}
    The objective from \eqref{eq: prob sphere}, i.e. \eqref{eq: obj}, is
    \smash{$2\norm{A}^2\left(
        \tfrac{1}{\lambda_d(B^{\tT}B)}
        +\tfrac{\norm{B}^2}{\lambda_d(B^{\tT}B)^2}
        \right)$}-Lipschitz continuous 
    on $\bb S^{d-1}$.
\end{lemma}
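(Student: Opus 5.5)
We prove the Lipschitz bound directly from the quotient structure, working with two points $v,w \in \bb S^{d-1}$ and comparing numerators and denominators separately. The splitting is
\begin{equation*}
    f(v) - f(w)
    = \frac{\norm{Av}^2 - \norm{Aw}^2}{\norm{Bv}^2}
    + \norm{Aw}^2\Bigl(\frac{1}{\norm{Bv}^2} - \frac{1}{\norm{Bw}^2}\Bigr),
\end{equation*}
and each of the two terms should produce one of the two summands in the claimed constant.

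\emph{Bounds used throughout.} For every $u \in \bb S^{d-1}$ we have $\norm{Bu}^2 = \scp{u}{B^{\tT}B u} \geq \lambda_d(B^{\tT}B) > 0$, which is well defined by the standing assumption $B^{\tT}B \succ 0$. We also have $\norm{Au}^2 \leq \norm{A}^2$ and $\norm{Bu}^2 \leq \norm{B}^2$.

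\emph{First term.} Write $\norm{Av}^2 - \norm{Aw}^2 = \scp{A(v-w)}{A(v+w)}$. Cauchy--Schwarz and $\norm{v+w} \leq 2$ give
\begin{equation*}
    \bigl|\norm{Av}^2 - \norm{Aw}^2\bigr| \leq 2\norm{A}^2\norm{v-w}.
\end{equation*}
Dividing by $\norm{Bv}^2 \geq \lambda_d(B^{\tT}B)$ bounds the first term by $\tfrac{2\norm{A}^2}{\lambda_d(B^{\tT}B)}\norm{v-w}$.

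\emph{Second term.} Rewrite it as
\begin{equation*}
    \norm{Aw}^2\,\frac{\norm{Bw}^2 - \norm{Bv}^2}{\norm{Bv}^2\norm{Bw}^2}.
\end{equation*}
The same polarization argument gives $\bigl|\norm{Bw}^2 - \norm{Bv}^2\bigr| \leq 2\norm{B}^2\norm{v-w}$. Combining this with $\norm{Aw}^2 \leq \norm{A}^2$ and the product of the two denominators being at least $\lambda_d(B^{\tT}B)^2$, the second term is bounded by $\tfrac{2\norm{A}^2\norm{B}^2}{\lambda_d(B^{\tT}B)^2}\norm{v-w}$.

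Adding the two estimates yields exactly the constant stated in Lemma~\ref{lem: lipschitz func}.

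Little here is delicate; the main thing to get right is the constant itself. The factor $2$ comes from $\norm{v+w} \leq 2$ on the sphere. The squared $\lambda_d$ in the second summand comes from bounding the product of both denominators, rather than from a mean-value argument on the Riemannian gradient \eqref{eq: euclidean grad}. The gradient route would give a different, geodesic-type constant, so we use the direct algebraic splitting to match the statement.
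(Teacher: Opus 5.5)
Your proof is correct and follows the paper's own argument. Both use the same splitting $f(v)-f(w)=\frac{\norm{Av}^2-\norm{Aw}^2}{\norm{Bv}^2}+\norm{Aw}^2\bigl(\frac{1}{\norm{Bv}^2}-\frac{1}{\norm{Bw}^2}\bigr)$, the same bounds $2\norm{A}^2\norm{v-w}$ and $2\norm{B}^2\norm{v-w}$ on the differences of squared norms, and $\norm{Bu}^2\geq\lambda_d(B^{\tT}B)$ on $\bb S^{d-1}$. Your polarization identity simply makes explicit the factor $2$ that the paper uses without comment.
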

\begin{proof}
    See Appendix~\ref{sec: app}.
\end{proof}

\begin{lemma}
    \label{lem: lipschitz grad}
    The (extended) gradient 
    of the objective from \eqref{eq: prob sphere}, i.e. \eqref{eq: obj},
    is \smash{$\tfrac{20}{\lambda_{d}(B^{\tT}B)^4} \norm{A}^2 \cdot$} \smash{$\norm{B}^6$}-Lipschitz continuous 
    on $\bb S^{d-1}$.
\end{lemma}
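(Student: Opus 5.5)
We bound the Lipschitz constant of $\nabla f$ on $\bb S^{d-1}$ by taking the supremum of the operator norm of the Euclidean Hessian \eqref{eq: hess special} over an open neighbourhood of the sphere, and then using a path argument. The ``extended'' gradient is the formula \eqref{eq: euclidean grad}, defined on all of $\bb R^d\setminus\{0\}$.

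For $v,w\in\bb S^{d-1}$ with $w\neq -v$ we join them by the normalized segment $\gamma(t)=\tfrac{(1-t)v+tw}{\norm{(1-t)v+tw}}$, i.e.\ the great-circle arc, whose length $\arccos\scp{v}{w}$ is at most $\tfrac{\pi}{2}\norm{v-w}$. Alternatively we use the straight segment $(1-t)v+tw$ together with $0$-homogeneity of $f$. Along the arc the mean value inequality gives
\[
\norm{\nabla f(v)-\nabla f(w)}\le \sup_{u\in\bb S^{d-1}}\norm{\nabla^2 f(u)}\cdot\mathrm{length}(\gamma).
\]
To avoid the constant $\pi/2$, we can instead use the fact that $\nabla f$ is $(-1)$-homogeneous and estimate directly on the chord. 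On the chord $\norm{u}\ge \sqrt{1-\norm{v-w}^2/4}$, and the factor $\norm{B}^2/\lambda_d(B^{\tT}B)\ge1$ in the target constant leaves slack to absorb this. The case $w=-v$ is trivial because $\nabla f(-v)=-\nabla f(v)$, and $\norm{\nabla f(v)}$ is bounded by $4\norm{A}^2\norm{B}^2/\lambda_d(B^{\tT}B)^2\le$ (constant)$\cdot 2/2$ after the same slack argument.

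Next we bound $\norm{\nabla^2 f(u)}$ for $u\in\bb S^{d-1}$ term by term from \eqref{eq: hess special}. Write $\lambda_d\coloneqq\lambda_d(B^{\tT}B)$, so that $\norm{Bu}^2\ge\lambda_d$ and $f(u)\le\norm{A}^2/\lambda_d$. The four contributions are bounded as follows:
\begin{align*}
\norm{A^{\tT}A-f(u)B^{\tT}B}&\le\norm{A}^2+\tfrac{\norm{A}^2\norm{B}^2}{\lambda_d}, &
\tfrac{4}{\norm{Bu}^4}\norm{B^{\tT}Bu}^2&\le\tfrac{4\norm{B}^4}{\lambda_d^2},\\
\tfrac{2}{\norm{Bu}^2}\cdot2\norm{A}^2\norm{B}^2&\le\tfrac{4\norm{A}^2\norm{B}^2}{\lambda_d}. & &
\end{align*}
Multiplying by the prefactor $2/\norm{Bu}^2\le 2/\lambda_d$ and reducing every term to the common form $\norm{A}^2\norm{B}^6/\lambda_d^4$ via $1\le\norm{B}^2/\lambda_d$ gives a total of about $2(1+1+4+4)=20$ times that quantity. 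This is exactly the claimed constant.

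The second Hessian term lacks a factor $\norm{A}^2$, so it must be rechecked. The middle term of \eqref{eq: hess special} is genuinely $\norm{A}^2$-free, and with $\norm{A}=0$ it would be nonzero while $\nabla f\equiv0$. The Hessian as printed is therefore dimensionally inconsistent. I expect the correct coefficient of $(B^{\tT}Bv)(B^{\tT}Bv)^{\tT}$ to be $4f(v)/\norm{Bv}^2$, which I would verify by differentiating \eqref{eq: euclidean grad} directly. With that factor the term contributes $4\norm{A}^2\norm{B}^4/\lambda_d^3$, and the sum stays within $20\norm{A}^2\norm{B}^6/\lambda_d^4$.

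The main obstacle is this careful recomputation of the Hessian, together with the bookkeeping that absorbs both the path-length factor and the off-sphere correction into the slack of $\norm{B}^2/\lambda_d\ge1$. If the straight chord is used, the slack needed from the chord is a factor of about $\bigl(1-\norm{v-w}^2/4\bigr)^{-3/2}$, which can be unbounded near antipodal points. There I would fall back on the geodesic argument, using that $\nabla f$ is tangent and $(-1)$-homogeneous, to compare $\nabla f(v)$ and $\nabla f(w)$ along the sphere.
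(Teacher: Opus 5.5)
Your Hessian bookkeeping is fine, and you are right that \eqref{eq: hess special} has a typo: the coefficient of $(B^{\tT}Bv)(B^{\tT}Bv)^{\tT}$ inside the bracket should be $4\norm{Av}^2/\norm{Bv}^4$. The paper's proof of this lemma never uses that formula. The gap is in the path step. Write $\lambda_d\coloneqq\lambda_d(B^{\tT}B)$ and $\kappa\coloneqq\norm{B}^2/\lambda_d=\kappa(B^{\tT}B)\geq1$. Your term-by-term estimate is $\sup_{u\in\bb S^{d-1}}\norm{\nabla^2 f(u)}\le\tfrac{2\norm{A}^2}{\lambda_d}(1+5\kappa+4\kappa^2)$. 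You reach the target $\tfrac{20\norm{A}^2}{\lambda_d}\kappa^3$ only by spending $\kappa\geq1$ on every term. For $\kappa=1$ (e.g.\ $B=I_d$, the plain operator-norm case) this Hessian bound already equals the full constant, so nothing is left over. Every path argument, however, multiplies by a factor strictly larger than one. Let $\theta$ be the angle between $v$ and $w$. The great-circle arc costs $\theta/(2\sin(\theta/2))$, up to $\pi/2$. The chord costs $\int_0^1\norm{(1-t)v+tw}^{-2}\,\mathrm dt=\theta/\sin\theta$, which exceeds $1$ for every $\theta\in(0,\pi)$ and blows up near antipodal pairs. Your claim that $\norm{B}^2/\lambda_d\geq1$ ``leaves slack'' therefore fails exactly in this boundary case. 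As written, your argument only proves the constant $10\pi\norm{A}^2\norm{B}^6/\lambda_d^4$.

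The paper avoids paths and Hessians altogether. It subtracts the two instances of \eqref{eq: euclidean grad} and uses the common denominator $\norm{Bv}^4\norm{Bw}^4\geq\lambda_d^4$. It then splits the numerator into four terms in which $v-w$ enters only through $\norm{A^{\tT}A(v-w)}$, $\norm{B^{\tT}B(v-w)}$, $\abs{\norm{Bv}^2-\norm{Bw}^2}\le2\norm{B}^2\norm{v-w}$ and $\abs{\norm{Av}^2-\norm{Aw}^2}\le2\norm{A}^2\norm{v-w}$. The chord length thus appears directly, and the coefficients add up to $2(1+1+2+6)=20$. If you want to keep your route, you must create the missing slack yourself, in three steps. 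First, use $\norm{A^{\tT}A-f(u)B^{\tT}B}\le\max\{\norm{A}^2,f(u)\norm{B}^2\}\le\kappa\norm{A}^2$, which gives $\tfrac{2\norm{A}^2}{\lambda_d}(5\kappa+4\kappa^2)$, i.e.\ $18$ instead of $20$ at $\kappa=1$. Second, apply the chord estimate only when $\norm{v-w}<\tfrac25$; there $\norm{u}^2\geq\tfrac{24}{25}$ on the chord and $\nabla^2 f$ is $(-2)$-homogeneous. Third, for $\norm{v-w}\geq\tfrac25$, use $\norm{\nabla f(v)-\nabla f(w)}\le2\sup_{\bb S^{d-1}}\norm{\nabla f}\le\tfrac{4\norm{A}^2}{\lambda_d}(1+\kappa)\le\tfrac25\cdot\tfrac{20\norm{A}^2}{\lambda_d}\kappa^3$.
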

\begin{proof}
    See Appendix~\ref{sec: app}.
\end{proof}

\section{General Construction}
 \label{sec: construction}

In this section, 
we derive the key geometric observation.
After a direction $x^k$ is sampled, the
entire update remains in the two-dimensional subspace $\spa\{v^k,x^k\}$. 
Maximizing the quotient along this projective
line is therefore exactly a $2\times2$ generalized Rayleigh quotient problem.
This interpretation is both the source of the closed-form line search and the
most robust way to implement degenerate cases.

In \cite{B9,B8},
a random step direction for a current iterate $v^k$, 
i.e. a surrogate of the Riemannian gradient, is constructed
in each iteration $k \in \bb N$
in $\bb T_{v^k} \cap \bb S^{d-1}$.
More precisely, it is proposed \cite[Rem.~4.1]{B9} to take 
\begin{equation}
    \label{eq: tangent surrogate}
    x^k \coloneqq \tfrac{P_{B^{\tT}Bv^k} x^k}{\norm{P_{B^{\tT}Bv^k} x^k}} 
    \sim \bb P_{X^k \mid V^k = v^k} = \mc U (\bb T_{B^{\tT}B v^k} \cap \bb S^{d-1})
\end{equation}
with $x^k \coloneqq \frac{x}{\norm{x}}$, where $x \sim \mc N(0, I_d)$.
This is motivated by the property of the Riemannian gradient,
cf. \eqref{eq: riemannian gradient}~f.,
and the fact that \eqref{eq: tangent surrogate} is an unbiased estimator
of the (true) Riemannian gradient \cite[Lem.~4.1]{B9} for solving \eqref{eq: prop sphere gen},
i.e. it holds w.r.t. \eqref{eq: tangent surrogate} that
\begin{equation}
    \label{eq: connection grad length}
    \bb E_{x^k \sim X^k \mid V^k = v^k}[\scp{A^{\tT}A v^k}{x^k}x^k] 
    = \tfrac{2 P_{B^{\tT}B v^k} A^{\tT}A v^k}{d-1}
    = \tfrac{2}{d-1} \grad \norm{A v^k}^2.
\end{equation}
Here, 
since we are not aware of $B^{\tT}B$
and, hence, working on the generalized sphere \cite{B9} is not possible,
we propose a more general 
and unconstrained sampling approach.
More precisely, we choose $x^k \sim \mc U(\bb S^{d-1})$
uniformly distributed on the entire unit sphere \cite[Ex. 3.3.7]{vershynin2018highdimprob}
given by 
\begin{equation}
    \label{eq: x^k}
    x^k \coloneqq \tfrac{x}{\norm{x}},
    \qquad x \sim \mc N(0, I_d)
\end{equation}
with property
$\bb E_{x \sim \mc N(0, I_d)}[x^k(x^k)^{\tT}] = \bb E_{x^k \sim \mc U(\bb S^{d-1})}[x^k(x^k)^{\tT}] = \tfrac{1}{d} I_d$.
Therefore, 
we resolved the issue 
for discussing conditional distributions
as we have $\bb P_{X^k \mid V^k=\cdot} = \bb P_{X^k} = \mc U(\bb S^{d-1})$,
which is necessary for the construction in \cite{B9,B8}, cf.~\eqref{eq: tangent surrogate}.
Afterwards,
we aim to solve the following optimal step size problem 
in each iteration $k \in \bb N$
given by 
\begin{equation}
    \label{eq: sub prob}
    \tau_k \in \argmax_{\tau \in \bb R} s_k(\tau)
    \quad \text{where} \quad 
    s_k (\tau)
    \coloneqq 
    f(R_{v^k}(\tau x^k)),
\end{equation}
which simplifies to a quotient of quadratic functions 
\begin{equation}
    \label{eq: sub prob func}
    s_k(\tau)
    = f(v^k + \tau x^k)
    = \frac{\norm{A(v^k + \tau x^k)}^2}{\norm{B(v^k + \tau x^k)}^2}
    = \frac{a_k + 2 b_k \tau + c_k \tau^2}{d_k + 2 e_k \tau + f_k \tau^2},
\end{equation}
where 
\begin{equation}
    \label{eq: para mgRq}
    \begin{array}{c c c}
        a_k \coloneqq \norm{A v^k}^2, 
        & b_k \coloneqq \scp{A v^k}{A x^k}, 
        & c_k \coloneqq \norm{A x^k}^2, \\
        d_k \coloneqq \norm{B v^k}^2, 
        & e_k \coloneqq \scp{B v^k}{B x^k}, 
        & f_k \coloneqq \norm{B x^k}^2.
    \end{array}
\end{equation}
The proposed algorithm is summarized in Alg.~\ref{alg: main}.
Notably,
the construction 
via the tangent spaces in \cite{B9,B8}, cf.~\eqref{eq: tangent surrogate},
allows for a much simpler quotient in \eqref{eq: sub prob func},
where the denominator reduces to $\tau \mapsto 1 + f_k \tau^2$, cf.~\eqref{eq: retraction}~f.,
and $f_k > 0$.
Now,
the analysis for solving \eqref{eq: sub prob}
involves even more theoretical background 
than just calculating the roots of
\begin{align}
    \label{eq: sub prob stationary}
    s_k'(\tau)
    & = 2 \frac{b_kd_k - a_ke_k
        + \tau (c_kd_k - a_kf_k) 
        + \tau^2 (c_ke_k - b_kf_k)}{(d_k + 2 \tau e_k + \tau^2 f_k)^2}.
\end{align}
Note that 
$d_k = \norm{v^k}_{B^{\tT}B}^2 \geq \lambda_d(B^{\tT}B) > 0$ 
and, for any $\tau \in \bb R \setminus\{0\}$,
whenever $x^k \notin \spa\{v^k\}$,
$d_k + 2 \tau e_k + \tau^2 f_k = \norm{v^k + \tau x^k}^2_{B^{\tT}B}$ and 
\begin{equation}
    \label{eq: sum d e f}
    \norm{v^k + \tau x^k}^2_{B^{\tT}B}
    \geq \lambda_d(B^{\tT}B)^2
    \geq \lambda_d(B^{\tT}B) (1 - \langle x^k,v^k\rangle^2)
    > 0.
\end{equation}

\begin{algorithm}[htb]
    \caption{Full-Sphere Random Line-Search Method}
    \label{alg: main}
    \begin{algorithmic}
        \State{\textbf{Given} $A \in \bb R^{m \times d}$ and $B \in \bb R^{\ell \times d}$ such that $\rank(B) = d$.}
        \State{\textbf{Initialize} with $v^0 \in \bb S^{d-1}$, for instance $v^0 = \tfrac{x}{\norm{x}}$ for $x \sim \mc N(0, I_d)$.}
        \For{$k = 0,1,2,\ldots$}
         \State{\textbf{Sample} $x^k = \tfrac{x}{\norm{x}}$ via $x \sim \mc N(0, I_d)$ as in \eqref{eq: x^k}.}
        \State{\textbf{Calculate} the maximizing projective step from the leading generalized eigenvector of $(M_k,N_k)$ as in Thm.~\ref{thm: svd}; in the generic case use \eqref{eq: opt tau}.}
        \State{\textbf{Update} $v^{k+1}=x^k$ if the maximizing step is $\tau_k=\infty$; otherwise set
        $v^{k+1}=R_{v^k}(\tau_kx^k)=\tfrac{v^k+\tau_kx^k}{\norm{v^k+\tau_kx^k}}$.}
        \EndFor
        \State{\textbf{Return} estimate $\norm{A/B} \approx \tfrac{\norm{A v^k}}{\norm{B v^k}} = \sqrt{\tfrac{a_k}{d_k}}$.}
    \end{algorithmic}
\end{algorithm}


\begin{remark}[Connection to \cite{B8} and \cite{B9}]
    \label{rem: connection}
    All theoretical results are done for the case $C = A^{\tT}A$ and $D = B^{\tT}B$ for a direct connection to \cite{B8}
    as adjoint-free approach and connects to \cite{B9} for $\nicefrac{\scp{v}{C v}}{\scp{v}{D v}}$.
    However, all the novel, presented results remain true for general matrices $(C, D)$,
    where $D \in \mc S_+^d$ just has to be Hermitian, positive definite 
    and we may have 
    \begin{equation*}
    \begin{array}{c c c}
        a_k \coloneqq \langle v^k, C v^k\rangle, 
        & b_k \coloneqq \langle v^k, C x^k\rangle, 
        & c_k \coloneqq \langle x^k, C x^k\rangle, \\
        d_k \coloneqq \langle v^k, D v^k\rangle, 
        & e_k \coloneqq \langle v^k, D x^k\rangle, 
        & f_k \coloneqq \langle x^k, D x^k\rangle.
    \end{array}
\end{equation*}
\end{remark}


\begin{remark}[Distribution of $(v^k, x^k)_{k \in \bb N}$]
    \label{rem: dist x^k v^k}
    The generated sequence 
    of iterates $(v^k)_{k \in \bb N}$ from Alg.~\ref{alg: main}
    is a sequence of samples from a Markov chain $(V^k)_{k \in \bb N}$.
    Here, for each $k \in \bb N$ the law is given by \smash{$P_{V^k} \coloneqq (V^k)_{\#} \bb P = \bb P \circ (V^{k})^{-1}$}
    on a probability space $(\Omega, \mf A, \bb P)$.
    As discussed \eqref{eq: x^k}~ff., 
    the step directions are samples from a random variable $X^k : \Omega \to \bb S^{d-1}$
    with law $\bb P_{X^k} = \mc U(\bb S^{d-1})$
    so that we deal with the surface measure $\sigma_{\bb S^{d-1}}$,
    which resolves the conditional properties from \cite[Rem.~4.1]{B9}.
    The iterates are samples from a conditional distribution \smash{$\bb P_{V^{k+1} \mid V^k}$},
    which defines the Markov kernel.
    Hence, \smash{$\bb P_{V^{k+1} \mid V^k=v^k}$} is a measure for each $v^k \in \bb S^{d-1}$
    and \smash{$\bb P_{V^{k+1} \mid V^k=\cdot}(A)$} is a measurable function 
    for any Borel set $A \in \mc B(\bb S^{d-1})$.
    Throughout the paper 
    we call an event almost surely (a.s.)
    if it happens with probability one,
    i.e. the counter event is with probability zero,
    with respect to $\bb P_{X^k}$.
\end{remark}


For the probabilistic proofs,
we rely on two theoretical results 
concerning the distribution 
of the random step directions, from Rem.~\ref{rem: dist x^k v^k}.


\begin{proposition}[{\cite[Lem.~4.2]{B9}}]
    \label{prop: measure zero affine space}
    Let $M \subset \bb R^d$ be an affine subspace of $\bb R^d$
    and 
    \begin{equation*}
        \varphi : M \setminus \{0\} \to \bb S^{d-1}, 
        \qquad x \mapsto \tfrac{x}{\norm{x}}.
    \end{equation*}
    Then, w.r.t. $\mc U(\bb S^{d-1})$ the uniform distribution on $\bb S^{d-1}$, holds
    \begin{enumerate}
        \item[i)] $\varphi(M)$ is a measure zero set if $0 \in M$ and $\dim(M) < d$,
        \item[ii)] $\varphi(M)$ is a measure zero set if $0 \not\in M$ and $\dim(M) < d-1$.
    \end{enumerate}
\end{proposition}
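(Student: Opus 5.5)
The idea is to show in both cases that $\varphi(M\setminus\{0\})$ lies in a set that is null for the surface measure $\sigma_{\bb S^{d-1}}$, and hence for $\mc U(\bb S^{d-1})$. Throughout we use that $\mc U(\bb S^{d-1})$ is the normalized surface measure, and that a uniform direction can be written as $x/\norm{x}$ with $x \sim \mc N(0,I_d)$. The latter gives the key identity
\begin{equation*}
    \mc U(\bb S^{d-1})(E) = \bb P_{x\sim\mc N(0,I_d)}\bigl(x \in \operatorname{cone}(E)\bigr),
    \qquad
    \operatorname{cone}(E) \coloneqq \{t e \mid t>0,\ e \in E\},
\end{equation*}
valid for Borel $E \subset \bb S^{d-1}$. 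So the task reduces to showing that the cone generated by $\varphi(M\setminus\{0\})$ has Lebesgue measure zero in $\bb R^d$; the Gaussian law is absolutely continuous, so such a set has probability zero.

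\textbf{Case i).} Since $0 \in M$, the set $M$ is a linear subspace. Hence $\varphi(M\setminus\{0\}) = M \cap \bb S^{d-1}$, and its cone is $M\setminus\{0\}$. This is contained in a proper linear subspace, i.e.\ a hyperplane or something smaller, which is Lebesgue-null. Measurability is clear because $M\cap\bb S^{d-1}$ is closed.

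\textbf{Case ii).} Now $0\notin M$ and $\dim M = p < d-1$. Write $M = m_0 + U$ with $\dim U = p$ and $m_0 \notin U$. The cone of $\varphi(M)$ is contained in
\begin{equation*}
    \spa\{m_0\} + U = \spa(M),
\end{equation*}
a linear subspace of dimension $p+1 < d$. It is therefore Lebesgue-null, and we conclude as in Case i). To be careful about measurability, we instead bound by the closed set $\spa(M)\cap\bb S^{d-1}$, which contains $\varphi(M)$ and has measure zero by Case i) applied to $\spa(M)$. So Case ii) in fact reduces directly to Case i).

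The only delicate step is justifying the identity between the uniform measure of $E$ and the Gaussian measure of its cone. This follows from rotation invariance of $\mc N(0,I_d)$ and the uniqueness of the rotation-invariant probability measure on the sphere, as already used in \eqref{eq: x^k}. Alternatively, one can say directly that a great subsphere $\spa(M)\cap \bb S^{d-1}$ of dimension at most $d-2$ is a lower-dimensional submanifold, and hence has zero $(d-1)$-dimensional Hausdorff (surface) measure. We would present the Gaussian-cone argument as the main route, since it stays within the sampling model \eqref{eq: x^k} already used in the paper. We also note that the hypothesis $\dim M < d-1$ in ii) is exactly what makes $\dim \spa(M) < d$; for $\dim M = d-1$ the image is an open hemisphere, which has positive measure.
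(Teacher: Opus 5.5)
Your proof is correct, but it takes a different route from the one the paper points to. The paper gives no argument of its own. It cites \cite[Lem.~4.2]{B9}, and where the result is used it equates it with \cite[Prop.~6.5 \& Cor.~6.12]{Lee2012smoothmanifolds}. Those are the smooth-manifold facts that a lower-dimensional embedded submanifold, or the smooth image of a lower-dimensional manifold, is a null set in the target. On that route, i) says that $M\cap\bb S^{d-1}$ is a great subsphere of dimension $\dim M-1<d-1$. Part ii) treats $\varphi|_M$ as a smooth map from a $(\dim M)$-dimensional manifold into the $(d-1)$-dimensional manifold $\bb S^{d-1}$, so $\dim M<d-1$ is exactly the mini-Sard hypothesis.

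You do two things differently. First, you use the Gaussian representation \eqref{eq: x^k} to turn nullity on the sphere into Lebesgue nullity of the generated cone in $\bb R^d$. After that, the only measure-theoretic fact you need is that a proper linear subspace is Lebesgue-null. Second, you collapse ii) into i) through $\varphi(M\setminus\{0\})\subset\spa(M)\cap\bb S^{d-1}$ with $\dim\spa(M)=\dim M+1<d$.

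Your version is elementary and self-contained. It settles measurability by bounding with a closed set, and it shows plainly why the dimension condition in ii) is sharp: for $\dim M=d-1$ the image is an open hemisphere. The manifold route needs Sard-type measure theory on manifolds. In exchange, it does not rely on the linear structure of $M$, so it would also cover images of nonlinear smooth families of directions.
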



\begin{proposition}[{\cite[Thm.~A.4]{langrenez2024setkirkwooddiracpositivestates},\cite{mityagin2020measurezero}}]
    \label{prop: measure zero roots}
    Let $\varrho : \bb R^{d} \to \bb R$ be a polynomial.
    Then the set 
    \begin{equation*}
        \varrho^{-1}(\{0\}) \cap \bb S^{d-1} = \{x \in \bb S^{d-1} : \varrho(x) = 0 \}
    \end{equation*} 
    is either 
    \begin{enumerate}
        \item[i)] a null set w.r.t. $\mc U(\bb S^{d-1})$ the uniform distribution on $\bb S^{d-1}$,
        or 
        \item[ii)] the entire sphere $\bb S^{d-1}$, i.e. $\varrho \equiv 0$ on $\bb S^{d-1}$.
    \end{enumerate} 
\end{proposition}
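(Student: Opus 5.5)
The plan is to reduce the statement to the classical fact that the zero set of a nonzero polynomial on a Euclidean space $\bb R^{n}$ is Lebesgue-null. I transport this fact to $\bb S^{d-1}$ through a rational chart, namely stereographic projection. Let $p_N = e_d$ be the north pole and let
\begin{equation*}
    \psi : \bb R^{d-1} \to \bb S^{d-1}\setminus\{p_N\},
    \qquad u \mapsto \Bigl(\tfrac{2u}{1+\norm{u}^2}, \tfrac{\norm{u}^2-1}{1+\norm{u}^2}\Bigr)
\end{equation*}
be the inverse stereographic projection. It is a smooth diffeomorphism, and its Jacobian factor relating Lebesgue measure on $\bb R^{d-1}$ to the surface measure is smooth and strictly positive, namely $\bigl(\tfrac{2}{1+\norm{u}^2}\bigr)^{d-1}$. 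Consequently a set $Z \subset \bb S^{d-1}\setminus\{p_N\}$ is a $\sigma_{\bb S^{d-1}}$-null set if and only if $\psi^{-1}(Z)$ is Lebesgue-null. The single point $\{p_N\}$ is null as well, for instance by Prop.~\ref{prop: measure zero affine space}~i) applied to the line $\spa\{p_N\}$.

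Next, let $N$ be the degree of $\varrho$. Each coordinate of $\psi$ is a polynomial divided by $1+\norm{u}^2$, so
\begin{equation*}
    \varrho(\psi(u)) = \frac{q(u)}{(1+\norm{u}^2)^{N}}
\end{equation*}
for a polynomial $q : \bb R^{d-1}\to\bb R$. Since the denominator never vanishes, we get $\psi^{-1}(\varrho^{-1}(\{0\})\cap\bb S^{d-1}) = q^{-1}(\{0\})$. This gives a case split.
\begin{enumerate}
    \item[a)] If $q \equiv 0$, then $\varrho$ vanishes on $\bb S^{d-1}\setminus\{p_N\}$. This set is dense in $\bb S^{d-1}$, so continuity of $\varrho$ gives $\varrho\equiv 0$ on $\bb S^{d-1}$, which is case ii).
    \item[b)] If $q \not\equiv 0$, then $q^{-1}(\{0\})$ is Lebesgue-null in $\bb R^{d-1}$. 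By the first paragraph, the zero set of $\varrho$ on the sphere is then contained in the union of a null set and $\{p_N\}$, which is case i).
\end{enumerate}
For $d=1$ the sphere consists of two points and the claim is immediate, so assume $d\ge 2$.

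The remaining and main step is the Euclidean lemma: a nonzero polynomial $q$ on $\bb R^{n}$ has a Lebesgue-null zero set. I prove it by induction on $n$.
\begin{enumerate}
    \item[$\bullet$] For $n=1$, a nonzero polynomial has finitely many roots, so its zero set is null.
    \item[$\bullet$] For the induction step, write $q(u',u_n)=\sum_{i=0}^{N} c_i(u')u_n^i$ with some $c_{i_0}\not\equiv 0$. By the induction hypothesis, the set $E=\{u' : c_{i_0}(u')=0\}$ is null in $\bb R^{n-1}$.
    \item[$\bullet$] For $u'\notin E$, the slice $u_n\mapsto q(u',u_n)$ is a nonzero univariate polynomial. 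Hence its zero set in $u_n$ is finite and therefore null.
    \item[$\bullet$] The zero set of $q$ is closed, hence measurable. Fubini--Tonelli then gives that its measure is at most $\int_{E}\infty\cdot 0\ldots$ more precisely, it is at most $\int_{\bb R^{n-1}\setminus E} 0\,du' + \lambda^{n}(E\times\bb R) = 0$.
\end{enumerate}
I expect this Fubini slicing to be the only step requiring care, specifically the measurability and the handling of the exceptional fibres over $E$. The stereographic bookkeeping is routine by comparison.
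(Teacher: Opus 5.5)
The paper gives no proof of this proposition; it imports it from the cited references. Those sources rest on the real-analytic version of the Euclidean lemma: a real-analytic function on a connected open set is either identically zero or has a Lebesgue-null zero set, and this is then transferred to the sphere. Your argument is a correct, self-contained alternative for $d\geq 2$, with four working pieces:
\begin{itemize}
\item The inverse stereographic chart is rational with nonvanishing common denominator $(1+\norm{u}^2)^N$, so the pulled-back zero set is exactly the zero set of a polynomial $q$ on $\bb R^{d-1}$.
\item The conformal factor $\bigl(\tfrac{2}{1+\norm{u}^2}\bigr)^{d-1}$ shows that null sets correspond under the chart.
\item The single missing point $p_N$ is handled by density in case a) and by nullity in case b).
\item The Fubini induction is sound. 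The zero set is closed, hence measurable, and the exceptional fibres lie over $E\times\bb R$, a countable union of the null sets $E\times[-k,k]$. The garbled fragment before ``more precisely'' should simply be deleted.
\end{itemize}
Compared with the cited route, yours needs only the polynomial case of the Euclidean lemma and a single chart covering all but one point. So no identity theorem and no connectedness argument for gluing charts is required. The cited route is more general, since it covers arbitrary real-analytic functions on connected analytic manifolds. The paper never uses that generality: the functions $\gamma$ and $\alpha$ in Lem.~\ref{lem: gamma_k} are polynomials.

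One claim in your proposal is wrong: the case $d=1$ is not ``immediate''. The statement is actually false there. On $\bb S^0=\{-1,1\}$ the uniform distribution gives each point mass $\tfrac12$. The polynomial $\varrho(x)=x-1$ vanishes exactly on $\{1\}$, which is neither a null set nor all of $\bb S^0$. Your own proof shows where $d\geq 2$ is needed: density of $\bb S^{d-1}\setminus\{p_N\}$ in case a), and nullity of $\{p_N\}$ in case b). You should therefore state the result for $d\geq 2$ and drop that sentence, rather than asserting the degenerate case.
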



\begin{remark}[Well-definiteness of the update scheme]
    \label{rem: norm not zero}
    For a well defined next iterate $v^{k+1}$,
    we have to ensure 
    that the denominator, i.e. the norm of $v^k + \tau x^k$,
    is not zero, to have, 
    even without normalization 
    for the retraction \eqref{eq: retraction},
    a well defined objective value, cf.~\eqref{eq: sum d e f}~f.
    Therefore,
    we observe that $\norm{v^k + \tau x^k} = 0$ 
    for some $\tau \in \bb R$ holds 
    if and only if
    \begin{equation*}
        x^k \in \spa\{v^k\}
        \quad \text{with} \quad
        \bb P_{X^k}(x^k \in \{v^k,-v^k\}) 
        = 0,
    \end{equation*}
    using Prop.~\ref{prop: measure zero affine space}~i), 
    since $0 \in \spa\{v^k\}$ is a one-dimensional subspace in $\bb R^d$.
\end{remark}

\subsection{Well-Definiteness of the Sequence}

In contrast to \cite{B8,B12,B9}
where the existence of the unique maximizer is clear under certain constructions, 
cf.~\eqref{eq: retraction} and \eqref{eq: sub prob func}~f.,
the existence and well-definiteness for \eqref{eq: sub prob} 
is shown by reformulating the problem 
to an equivalent extremal generalized Rayleigh quotient problem
for ($2\times2$)-matrices.

\begin{theorem}
    \label{thm: svd}
    Assume $N_k\succ0$, with $a_k,b_k,c_k,d_k,e_k,f_k$ as in \eqref{eq: para mgRq}. 
    Then the projective line-search problem \eqref{eq: sub prob} 
    is equivalent to the largest generalized eigenvalue
    problem for
    \begin{equation*}
        M_k=\begin{bmatrix}a_k&b_k\\ b_k&c_k\end{bmatrix},
        \qquad
        N_k=\begin{bmatrix}d_k&e_k\\ e_k&f_k\end{bmatrix}.
    \end{equation*}
    If $\xi_k=(\xi_{k,1},\xi_{k,2})^{\tT}$ is a generalized eigenvector
    associated with the largest generalized eigenvalue, then a maximizing
    projective step is $\tau_k=\xi_{k,2}/\xi_{k,1}$ when $\xi_{k,1}\neq0$;
    the case $\xi_{k,1}=0$ represents $\tau_k=\infty$ and the direction $x^k$.

    In the generic case $\gamma_k\neq0$, 
    the maximizer is finite and equals
    \begin{equation}
        \label{eq: opt tau}
        \tau_k=-\frac{\beta_k}{2\gamma_k}
        -\sign(\gamma_k)
        \sqrt{\frac{\beta_k^2}{4\gamma_k^2}-\frac{\alpha_k}{\gamma_k}},
        \qquad
        \left\{\begin{aligned}
            \alpha_k&=b_kd_k-a_ke_k,\\
            \beta_k&=c_kd_k-a_kf_k,\\
            \gamma_k&=c_ke_k-b_kf_k.
        \end{aligned}\right.
    \end{equation}
\end{theorem}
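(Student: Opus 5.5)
The plan is to rewrite the line-search objective as a Rayleigh quotient in two homogeneous coordinates. For $\xi=(\xi_1,\xi_2)^{\tT}\in\bb R^2\setminus\{0\}$ put $w(\xi)=\xi_1v^k+\xi_2x^k$. Expanding the quadratic forms with the coefficients \eqref{eq: para mgRq} gives
\begin{equation*}
\norm{Aw(\xi)}^2=\xi^{\tT}M_k\xi,
\qquad
\norm{Bw(\xi)}^2=\xi^{\tT}N_k\xi .
\end{equation*}
Consequently $f(w(\xi))=\xi^{\tT}M_k\xi/\xi^{\tT}N_k\xi$, and $s_k(\tau)$ is this quotient at $\xi=(1,\tau)^{\tT}$.

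\textbf{Step 1: reduction to a $2\times2$ problem.} Because $f$ is scale invariant, the retraction in \eqref{eq: sub prob} does not change the value, which is why \eqref{eq: sub prob func} holds. The rays $\xi\in\bb R^2\setminus\{0\}$ modulo scaling form the projective line $\{(1,\tau):\tau\in\bb R\}\cup\{(0,1)\}$. The point $(0,1)$ is $\tau=\infty$, and it represents the direction $x^k$ with value $c_k/f_k=\lim_{\tau\to\pm\infty}s_k(\tau)$. Since $N_k\succ0$, the Rayleigh--Ritz characterization of \eqref{eq: gen rayleigh} (after the substitution $\eta=N_k^{1/2}\xi$) shows that
\begin{equation*}
\sup_{\tau}s_k(\tau)=\max_{\xi\neq0}\frac{\xi^{\tT}M_k\xi}{\xi^{\tT}N_k\xi}=\lambda_{\max}(M_k,N_k),
\end{equation*}
and that this maximum is attained exactly at the generalized eigenvectors of the largest eigenvalue. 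Scaling such a $\xi_k$ to first coordinate $1$ gives $\tau_k=\xi_{k,2}/\xi_{k,1}$ when $\xi_{k,1}\neq0$. When $\xi_{k,1}=0$ the maximizer is the point at infinity, i.e.\ $w=x^k$. This proves the equivalence and the first assertion, and it also gives existence, which is the real content since $\tau$ ranges over a noncompact set.

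\textbf{Step 2: the closed form when $\gamma_k\neq0$.} The critical points of $s_k$ are the zeros of $p(\tau)=\alpha_k+\beta_k\tau+\gamma_k\tau^2$, the numerator in \eqref{eq: sub prob stationary}. I first show that the maximizer is finite.
\begin{itemize}
\item A generalized eigenvector $(0,1)$ would require $M_ke_2=\lambda N_ke_2$, i.e.\ $b_k=\lambda e_k$ and $c_k=\lambda f_k$. Eliminating $\lambda$ gives $c_ke_k-b_kf_k=0$, that is $\gamma_k=0$.
\item So if $\gamma_k\neq0$, both eigen-rays have $\xi_1\neq0$ and correspond to finite $\tau$.
\item If $\lambda_1=\lambda_2$, then $M_k=\lambda N_k$ and $s_k$ is constant, forcing $\gamma_k=0$. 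Hence the two eigenvalues are distinct.
\item The two finite eigen-rays are then the two distinct real roots of $p$, and the discriminant $\beta_k^2-4\alpha_k\gamma_k$ is positive.
\end{itemize}

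\textbf{Step 3: selecting the maximizing root.} This is the step I expect to be the delicate one. The denominator of $s_k'$ is positive, so the sign of $s_k'$ is the sign of $p$. A root $\tau^*$ is a strict local maximum exactly when $p$ changes sign from $+$ to $-$ there, i.e.\ when $p'(\tau^*)<0$. For the root in \eqref{eq: opt tau},
\begin{equation*}
p'(\tau_k)=\beta_k+2\gamma_k\tau_k=-2\gamma_k\sign(\gamma_k)\sqrt{\cdot}=-2\abs{\gamma_k}\sqrt{\tfrac{\beta_k^2}{4\gamma_k^2}-\tfrac{\alpha_k}{\gamma_k}}<0 .
\end{equation*}
So $\tau_k$ is the local maximum and the other root is the local minimum. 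Step 1 says the global maximum is attained at a finite stationary point, and there are only these two. Therefore $\tau_k$ is the global maximizer.

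The only care needed is the sign bookkeeping with $\sign(\gamma_k)$, and checking that $\gamma_k\neq0$ indeed excludes both the point at infinity and the degenerate equal-eigenvalue case.
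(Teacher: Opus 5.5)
Your proposal is correct and follows essentially the same route as the paper: you recast $s_k$ as the $2\times2$ generalized Rayleigh quotient of $(M_k,N_k)$ on the compact projective line, identify the finite stationary points with the roots of $\alpha_k+\beta_k\tau+\gamma_k\tau^2$, and select the root where $\beta_k+2\gamma_k\tau<0$ (the paper phrases this as $\sign(s_k''(\tau_k^{\pm}))=\pm\sign(\gamma_k)$). Your Step~2 is more careful than the paper, which only asserts that the discriminant is nonnegative and that the maximizer is finite; you show that the point at infinity $(0,1)$ is a generalized eigenvector exactly when $\gamma_k=0$, and that $\gamma_k\neq0$ forces distinct eigenvalues, which gives the strict positivity of the discriminant and the finiteness of the global maximizer that the sign argument actually needs.
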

\begin{proof}
    Firstly, 
    notice for $\xi \colon \bb R \to \bb R^2, \tau \mapsto [1,\tau]^{\tT}$,
    it holds
    \begin{equation*}
        \frac{\scp{\xi(\tau)}{M_k \xi(\tau)}}{\scp{\xi(\tau)}{N_k \xi(\tau)}}
        = \frac{a_k + 2 \tau b_k + \tau^2 c_k}{d_k + 2 \tau e_k + \tau^2 f_k} = s_k(\tau).
    \end{equation*}
    Since $N_k\succ0$, 
    this quotient is well defined on the real projective line,
    which is compact. 
    Its maximum is therefore the largest generalized
    eigenvalue of $(M_k,N_k)$, and a maximizing projective coordinate is obtained
    from the corresponding generalized eigenvector as stated.

    Secondly, we discuss the real-valued property 
    of the solution, i.e. the non-negativity of the argument of the square-root:
    The Gram matrices $M_k, N_k \in \bb R^{2\times 2}$ 
    are equivalently given by 
    \begin{align*}
        M_k & \coloneqq [v^k, x^k]^{\tT} A^{\tT}A [v^k, x^k]
        = \left[\begin{smallmatrix}
            \scp{u^k}{u^k} & \scp{u^k}{w^k} \\ \scp{u^k}{w^k} & \scp{w^k}{w^k}
        \end{smallmatrix}\right]
        = [u^k, w^k]^{\tT} [u^k, w^k]
    \end{align*}
    and
    \begin{align*}
        N_k & \coloneqq [v^k, x^k]^{\tT} B^{\tT}B [v^k, x^k]
        = \left[\begin{smallmatrix}
            \scp{y^k}{y^k} & \scp{y^k}{z^k} \\ \scp{y^k}{z^k} & \scp{z^k}{z^k}
        \end{smallmatrix}\right]
        = [y^k, z^k]^{\tT} [y^k, z^k],
    \end{align*}
    where 
    \begin{equation}
        \label{eq: y z}
        u^k \coloneqq A v^k,
        \quad w^k \coloneqq A x^k,
        \quad y^k \coloneqq B v^k,
        \quad \text{and} \quad 
        z^k \coloneqq B x^k.
    \end{equation}
    Recall, Gram matrices 
    always satisfy $M_k, N_k \succeq 0$.
    If $N_k \succ 0$, 
    then the stationary condition $s_k'(\tau) = 0$ from \eqref{eq: sub prob stationary}
    is equivalent to the 2d generalized Rayleigh quotient problem given by 
    \begin{displaymath}
        M_k \xi(\tau) = \eta N_k \xi(\tau), 
        \qquad \text{with} \qquad 
        \xi(\tau) = \left[\begin{smallmatrix}
            1 \\ \tau
        \end{smallmatrix}\right]
        \qquad \text{and optimal} \qquad \eta, \tau \in \bb R, 
    \end{displaymath}
    For $\gamma_k\neq0$, 
    the stationary equation is
    \begin{equation*}
        0=\alpha_k+\beta_k\tau+\gamma_k\tau^2,
        \qquad
        \tau_k^{\pm}=-\frac{\beta_k}{2\gamma_k}
        \pm\sqrt{\frac{\beta_k^2}{4\gamma_k^2}-\frac{\alpha_k}{\gamma_k}}.
    \end{equation*}
    Thirdly, 
    the discriminant is nonnegative because the stationary projective points of
    a real symmetric generalized eigenvalue problem are real. 
    At a stationary point the denominator of $s_k'$ is positive, 
    and it holds
    \begin{equation*}
        \sign\bigl(s_k''(\tau_k^{\pm})\bigr)
        =\sign\bigl(\beta_k+2\gamma_k\tau_k^{\pm}\bigr)
        =\pm\sign(\gamma_k).
    \end{equation*}
    Hence the maximizing root is the smaller root when $\gamma_k>0$ and the
    larger root when $\gamma_k<0$, which is exactly \eqref{eq: opt tau}.
\end{proof}

\begin{remark}[Sub-SVD Problem for \cite{B9}]
    Considering the extremal generalized Rayleigh quotient problem $\mc R(C, D)$ 
    from \eqref{eq: gen rayleigh} \cite{B9},
    then the optimal step size problem 
    reads as sub-SVD problem w.r.t.
    \begin{equation*}
        M_k \coloneqq \left[\begin{smallmatrix}
            a_k & \nicefrac{b_k}{2} \\ \nicefrac{b_k}{2} & c_k
        \end{smallmatrix}\right]
        \quad \text{and} \quad 
        N_k \coloneqq \diag(1, f_k) \succ 0,
        \quad \text{where} \quad 
        d_k = 1, \quad e_k = 0,
    \end{equation*}
    by construction due to $v^k \in \bb S_D^{d-1}$
    and $x^k \in \bb T_{Dv^k}$, cf.~Rem.~\ref{rem: connection}.
\end{remark}

For ensuring that $N_k \succ 0$ from Thm.~\ref{thm: svd},
and translating the property to properties of $\{v^k, x^k\}_{k \in \bb N}$,
recall that $N_k \succ 0$,
if and only if $Bv^k$ and $Bx^k$ are linearly independent,
which is equivalent to $v^k$ and $x^k$ being $B$-linearly independent.

\begin{lemma}
    \label{lem: N_k pos def}
    If $\ker(B) = \{0\}$, 
    then $v^k$ and $x^k$ are $B$-linearly independent a.s.,
    hence $y^k$ and $z^k$ are linearly independent a.s.,
    and $N_k \succ 0$ a.s.
\end{lemma}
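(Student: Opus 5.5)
Since $\ker(B)=\{0\}$, the map $B$ is injective, so $y^k=Bv^k$ and $z^k=Bx^k$ are linearly dependent if and only if $v^k$ and $x^k$ are linearly dependent. Indeed, if $\alpha y^k+\beta z^k=0$ with $(\alpha,\beta)\neq(0,0)$, then $B(\alpha v^k+\beta x^k)=0$, which forces $\alpha v^k+\beta x^k=0$. Hence $B$-linear independence of $v^k,x^k$ is the same as ordinary linear independence. Because $v^k\neq0$ (it lies on $\bb S^{d-1}$), dependence is equivalent to $x^k\in\spa\{v^k\}$, i.e.\ $x^k\in\{v^k,-v^k\}$ since $x^k\in\bb S^{d-1}$.

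The probabilistic step conditions on $V^k=v^k$. By Rem.~\ref{rem: dist x^k v^k}, $x^k$ is drawn from $\mc U(\bb S^{d-1})$ independently of $v^k$, so $\bb P_{X^k\mid V^k=v^k}=\mc U(\bb S^{d-1})$. For fixed $v^k$, the set $\spa\{v^k\}$ is a linear subspace containing $0$ of dimension $1<d$, for $d\geq2$. Prop.~\ref{prop: measure zero affine space}~i) then shows that $\varphi(\spa\{v^k\}\setminus\{0\})=\{v^k,-v^k\}$ is a null set; this is exactly the observation in Rem.~\ref{rem: norm not zero}. Integrating the conditional probability $0$ over the law $P_{V^k}$ by Fubini/the tower property gives $\bb P(X^k\in\spa\{V^k\})=0$. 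Measurability of the set $\{(v,x): x\in\{\pm v\}\}$, a closed set, is clear. So $v^k,x^k$ are linearly independent a.s., hence $B$-linearly independent, and $y^k,z^k$ are linearly independent a.s.

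Finally, $N_k=[y^k,z^k]^{\tT}[y^k,z^k]$ is the Gram matrix computed in the proof of Thm.~\ref{thm: svd}. For $\xi\in\bb R^2\setminus\{0\}$ we have $\scp{\xi}{N_k\xi}=\norm{\xi_1y^k+\xi_2z^k}^2$, which is positive whenever $y^k,z^k$ are independent. Equivalently, $\det N_k=\norm{y^k}^2\norm{z^k}^2-\scp{y^k}{z^k}^2>0$ by strict Cauchy–Schwarz, and $d_k>0$. Hence $N_k\succ0$ a.s.

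The only delicate point is the conditioning argument, namely that the dependence of $v^k$ on past samples does not spoil the null-set property. This is handled by independence of $X^k$ from $V^k$ together with Fubini. I would also remark that the case $d=1$ is degenerate, since then $N_k$ is always singular, and must be excluded, so the statement is meant for $d\geq2$.
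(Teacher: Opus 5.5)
Your proof is correct and follows the paper's argument: injectivity of $B$ turns linear dependence of $y^k,z^k$ into $x^k\in\{v^k,-v^k\}$, which is a null set by Prop.~\ref{prop: measure zero affine space}~i). Beyond the paper, you make the conditioning step explicit (independence of $X^k$ from $V^k$, then Fubini), treat general dependence rather than only colinearity with $\lambda\neq0$, and note the needed restriction $d\geq2$; these are refinements of the same route, not a different one.
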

\begin{proof}
    Let $v^k \in \bb S^{d-1}$, and notice that $B v^k \neq 0$ as well as $B^{\tT}B v^k$.
    To show that $x^k$ and $v^k$ are not $B$-colinear,
    i.e. $y^k$ and $z^k$ are not colinear and hence linearly independent such that $N^k \succ 0$ (a.s.),
    the contrary is assumed.
    To this end, suppose there exists $\lambda \in \bb R\setminus\{0\}$ such that 
    \begin{equation*}
        \lambda y^k = z^k 
        \quad\Leftrightarrow\quad 
        \lambda B v^k = B x^k
        \quad\Rightarrow\quad
        \lambda B^{\tT}B v^k = B^{\tT}B x^k
        \quad\Leftrightarrow\quad 
        \lambda v^k = x^k
    \end{equation*}
    under the assumption that $\ker(B) = \{0\}$ 
    i.e. $B^{\tT} B \succ 0$,
    and hence $(B^{\tT} B)^{-1}$ exists.
    By construction,
    we then have $1 = \norm{x^k} = \norm{\lambda v^k} = |\lambda| \cdot \norm{v^k} = |\lambda|$
    and hence $x^k \in \{v^k, - v^k\} = \spa\{v^k\} \cap \bb S^{d-1}$.
    Similar to the argument in Rem.~\ref{rem: norm not zero},
    we obtain by Prop.~\ref{prop: measure zero affine space}~i) that
    \begin{equation*}
        \bb P_{X^k}(x^k \in \spa\{v^k\} \cap \bb S^{d-1})
        = \bb P_{X^k}(x^k \in \{v^k, -v^k\})
        = 0,
    \end{equation*}
    which yields the assertion.
\end{proof}

Next,
the first case $\gamma_k = 0$ is studied,
where $\gamma_k \neq 0$ would ensure under $N_k \succ 0$
the closed form for the optimal step sizes, see Thm.~\ref{thm: svd} and Lem.~\ref{lem: N_k pos def}.
The second case $\alpha_k = 0$ detects a generalized eigen pair
given by $(v^k, \nicefrac{\norm{A v^k}^2}{\norm{B v^k}^2})$.
Therefore,
we translate the problem to properties of the random variables $v^k \in \bb S^{d-1}$.
In the following, we always assume that $\ker(B) = \{0\}$.
Here, we mainly utilize Prop.~\ref{prop: measure zero roots}.

\begin{lemma}
    \label{lem: gamma_k}
    Let $v^k \in \bb S^{d-1}$ be not a generalized eigenvector of $(A^{\tT}A, B^{\tT}B)$.
    If $x^k$ is sampled as in \eqref{eq: x^k},
    then $\gamma_k \neq 0$ 
    and $\alpha_k \neq 0$ a.s.
\end{lemma}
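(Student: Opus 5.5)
Both $\alpha_k$ and $\gamma_k$ are polynomials in $x = x^k$ once $v^k$ is fixed. So we apply Prop.~\ref{prop: measure zero roots}. Each zero set on $\bb S^{d-1}$ is then a null set unless the polynomial vanishes identically on the sphere, and it suffices to exclude that degenerate case. Write $C = A^{\tT}A$, $D = B^{\tT}B$, and fix $v = v^k$.

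\emph{The case $\alpha_k$.} We have
\begin{equation*}
    \alpha_k(x) = b_k d_k - a_k e_k = \scp{x}{d_k\, C v - a_k\, D v}, \qquad d_k = \norm{Bv}^2, \quad a_k = \norm{Av}^2.
\end{equation*}
This is a linear form in $x$ with coefficient vector $g \coloneqq d_k C v - a_k D v = \tfrac{\norm{Bv}^4}{2}\nabla f(v)$, by \eqref{eq: euclidean grad}. If $\alpha_k \equiv 0$ on $\bb S^{d-1}$, then by homogeneity it vanishes on all of $\bb R^d$, so $g = 0$. Then $C v = f(v) D v$ by \eqref{eq: eigen}, i.e.\ $v$ is a generalized eigenvector, which contradicts the hypothesis. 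Hence $g \neq 0$. The zero set $\{x : \scp{x}{g} = 0\}\cap\bb S^{d-1}$ is then a null set, either by Prop.~\ref{prop: measure zero roots} or directly by Prop.~\ref{prop: measure zero affine space}~i) applied to the hyperplane $g^\perp$. So $\alpha_k \neq 0$ a.s.

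\emph{The case $\gamma_k$.} Here
\begin{equation*}
    \gamma_k(x) = c_k e_k - b_k f_k = \scp{x}{Cx}\scp{x}{Dv} - \scp{x}{Cv}\scp{x}{Dx},
\end{equation*}
a homogeneous cubic polynomial in $x$. Again, vanishing on the sphere is equivalent to vanishing on $\bb R^d$, so we must show that $\gamma_k \not\equiv 0$ as a polynomial whenever $Cv \neq f(v) Dv$. The quickest route is to evaluate at a well-chosen point. At $x = v$ we get $\gamma_k(v) = a_k d_k - a_k d_k = 0$, which is useless, so we expand around $v$ instead. Put $x = v + t h$. The cubic $\gamma_k(v+th)$ has $t^0$-coefficient $0$. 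Its $t^1$-coefficient is
\begin{equation*}
    2\scp{h}{Cv} d_k + a_k \scp{h}{Dv} - \scp{h}{Cv} d_k - 2 a_k \scp{h}{Dv} = \scp{h}{d_k Cv - a_k Dv} = \scp{h}{g}.
\end{equation*}
Choosing $h = g \neq 0$ makes this coefficient $\norm{g}^2 > 0$. Therefore $t \mapsto \gamma_k(v+tg)$ is a nonzero polynomial in $t$, so $\gamma_k \not\equiv 0$ on $\bb R^d$, and by homogeneity also not on $\bb S^{d-1}$. Prop.~\ref{prop: measure zero roots}~i) then gives $\bb P_{X^k}(\gamma_k = 0) = 0$.

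Finally, take the union of the two null events. We may also intersect with the full-measure event of Lem.~\ref{lem: N_k pos def}, so that Thm.~\ref{thm: svd} applies. This yields $\alpha_k \neq 0$ and $\gamma_k \neq 0$ simultaneously a.s.

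The main obstacle is showing that the cubic $\gamma_k$ is not identically zero. I expect the first-order expansion at $v$ to resolve it, because that linear term is exactly $\scp{h}{g}$ with $g \propto \nabla f(v) \neq 0$. This reduces the cubic case to the same non-eigenvector condition that settles the $\alpha_k$ case.
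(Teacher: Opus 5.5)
Your proof is correct, but it rules out the degenerate alternative of Prop.~\ref{prop: measure zero roots} by a different mechanism than the paper.

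\textbf{The paper's route.} It passes to whitened coordinates $y=(B^{\tT}B)^{1/2}x$. It then restricts each polynomial to a hyperplane ($\bb T_{a^k}$, resp.\ $\bb T_{b^k}$) on which one of the two products drops out. If the polynomial vanished identically, two vectors would have to be colinear: $a^k=(B^{\tT}B)^{-1/2}A^{\tT}Av^k$ and $b^k=(B^{\tT}B)^{1/2}v^k$ for $\gamma_k$, resp.\ $b^k$ and $\tilde A b^k$ for $\alpha_k$. Either colinearity makes $v^k$ a generalized eigenvector.

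\textbf{Your route.} You note that $\alpha_k$ is simply the linear form $\scp{x}{g}$ with $g=\tfrac{d_k^2}{2}\nabla f(v^k)$. That case then closes immediately via a hyperplane, with no change of variables. For the cubic $\gamma_k$ you show non-vanishing directly, expanding along $v^k+tg$ and reading off the linear coefficient $\norm{g}^2>0$. I checked that expansion and it is right.

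\textbf{What each buys.} Your version needs no matrix square root. It reduces both assertions to the single condition $\nabla f(v^k)\neq0$. It also shows why the two cases are linked: the first-order Taylor coefficient of $\gamma_k$ at $x=v^k$ is exactly the linear form that defines $\alpha_k$.

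It also avoids a small slip in the paper. In the substituted formula for $\gamma(y)$ the roles of $a^k$ and $b^k$ are interchanged; the correct form is $\scp{y}{\tilde A y}\scp{y}{b^k}-\norm{y}^2\scp{y}{a^k}$. This is harmless there, because the colinearity conclusion is symmetric.

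The paper's approach, in exchange, handles both polynomials with one template. It gives the degenerate case a clean geometric meaning as a colinearity condition in the whitened coordinates.
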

\begin{proof}
    For the first assertion
    assume the contrary $\gamma_k = 0$
    and observe 
    \begin{align*}
    0 &= \gamma_k = \scp{x^k}{A^{\tT}Ax^k}\scp{B^{\tT}Bx^k}{v^k} - \scp{x^k}{B^{\tT}Bx^k}\scp{A^{\tT}Ax^k}{v^k} \notag \\
    \Leftrightarrow \quad 
    0 &= \scp{x^k}{\Bigl(\tfrac{\scp{x^k}{A^{\tT}Ax^k}}{\scp{x^k}{B^{\tT}Bx^k}} B^{\tT}B - A^{\tT}A\Bigr)v^k}.
    \label{eq:gamma_k}
    \end{align*}
    To this end, 
    define the cubic function $\gamma(x)$ in terms of Prop.~\ref{prop: measure zero roots}
    \begin{equation*}
        \gamma(x) \coloneqq \scp{x}{A^{\tT}Ax}\scp{B^{\tT}Bx}{v^k} - \scp{x}{B^{\tT}Bx}\scp{A^{\tT}Ax}{v^k}.
    \end{equation*}
    Then, by Prop.~\ref{prop: measure zero roots}
    the set of roots on the sphere are either measure zero 
    w.r.t. uniform distribution 
    or $\gamma \equiv 0$.
    Therefore, 
    let $B^{\tT}B = \tilde B^{1/2} \tilde B^{1/2}$ due to the Hermitian, positive definiteness, 
    and define 
    \begin{equation}    
        \label{eq: para alpha beta}
        y \coloneqq \tilde B^{1/2}x, 
        \; \tilde A \coloneqq \tilde B^{-1/2} A^{\tT}A \tilde B^{-1/2},
        \; a^k \coloneqq \tilde B^{-1/2} A^{\tT}A v^k, 
        \; b^k \coloneqq  \tilde B^{1/2}v^k,
    \end{equation}
    such that
    \begin{equation*}
        \gamma(y) = \scp{y}{\tilde A y} \scp{y}{a^k} - \norm{y}^2 \scp{y}{b^k}.
    \end{equation*}
    On $\bb T_{a^k}\setminus\{0\}$, it holds $\gamma(y) = - \norm{y}^2 \langle y, b^k\rangle$.
    Under contrary assumption in Prop.~\ref{prop: measure zero roots}, 
    we then have $0 = \norm{y}^2 \langle y, b^k\rangle$ on $\bb T_{a^k}\setminus\{0\}$.
    Since $\norm{y} \neq 0$,
    it necessarily holds $\langle y, b^k\rangle = 0$ on $\bb T_{a^k}\setminus\{0\}$.
    Furthermore this holds for any $y \in \bb T_{a^k}$,
    hence $a^k$ and $b^k$ are colinear,
    i.e. there exists $\lambda \in \bb R\setminus\{0\}$,
    such that 
    \begin{align*}
        \lambda a^k = b^k 
        \quad \Leftrightarrow \quad 
        \lambda \tilde B^{-1/2} A^{\tT}A v^k = \tilde B^{1/2} v^k 
        \quad \Leftrightarrow \quad 
        \lambda A^{\tT}A v^k = B^{\tT}B v^k.
    \end{align*}
    In turn, this implies
    that $v^k$ is a generalized eigenvector of $(A^{\tT}A, B^{\tT}B)$
    and contradicts the assumption.
    This yields the first assertion.
    For the second assertion, observe similarly that 
    \begin{align*}
        0 &= \alpha_k = \scp{v^k}{A^{\tT}Ax^k}\scp{B^{\tT}Bv^k}{v^k} - \scp{v^k}{B^{\tT}Bx^k}\scp{A^{\tT}Av^k}{v^k} \notag \\
        \Leftrightarrow \quad 
        0 &= \scp{x^k}{\Bigl(\tfrac{\scp{v^k}{A^{\tT}Av^k}}{\scp{v^k}{B^{\tT}Bv^k}} B^{\tT}B - A^{\tT}A\Bigr)v^k}.
    \end{align*}
    To this end,
    define the cubic function in terms of Prop.~\ref{prop: measure zero roots}
    \begin{equation*}
        \alpha(x) \coloneqq \scp{v^k}{A^{\tT}Ax}\scp{B^{\tT}Bv^k}{v^k} - \scp{v^k}{B^{\tT}Bx}\scp{A^{\tT}Av^k}{v^k}
    \end{equation*}
    and substituting with the parameters from \eqref{eq: para alpha beta}
    yields 
    \begin{equation*}
        \alpha(y) = \scp{b^k}{\tilde A y} \norm{b^k}^2 - \scp{y}{b^k} \scp{a^k}{b^k}.
    \end{equation*}
    Again, 
    on $\bb T_{b^k}\setminus\{0\}$, 
    one obtains $\alpha(y) = \langle b^k, \tilde A y\rangle \norm{b^k}^2$
    and by taking the contrary to Prop.~\ref{prop: measure zero roots} into account, 
    it holds $0 = \langle b^k,\tilde A y\rangle$,
    since $\norm{b^k}^2 \neq 0$.
    By the symmetry of $\tilde A$
    and since $0 = \langle \tilde A b^k,y\rangle = 0$ for all $y \in \bb T_{b^k}$,
    the colinearity of $b^k$ and $\tilde A b^k$ follows.
    Hence,
    there exists some $\lambda \in \bb R \setminus\{0\}$ such that
    \begin{equation*}
        \lambda \tilde A b^k = b^k
        \;\Leftrightarrow \; 
        \lambda \tilde B^{-1/2}A^{\tT}A \underbracket{\tilde B^{-1/2} \tilde B^{1/2}}_{= I_d} v^k = \tilde B^{1/2} v^k
        \;\Leftrightarrow\; 
        \lambda A^{\tT}A v^k = B^{\tT}B v^k,
    \end{equation*}
    yielding a generalized eigenvector $v^k$ of $(A^{\tT}A, B^{\tT}B)$,
    contradicting the assumption, again.
\end{proof}

\subsection{Relation to Generalized Eigenspaces}
\label{sec: gen eigenspaces}

In this section,
we translated the non-generalized eigenvector condition of the iterates $v^k$
to an assumption on the size of the spectrum of \eqref{eq: eigen}.

\begin{proposition}
    \label{prop: update}
    If $\# \spec((B^{\tT}B)^{-1/2}A^{\tT}A(B^{\tT}B)^{-1/2}) > 1$,
    then 
    \begin{enumerate}[label=\roman*)]
        \item $v^0 = \tfrac{\hat v}{\norm{\hat v}}$, 
        where $\hat v \sim \mc N(0,I_d)$, is not a generalized eigenvector a.s.,
        \item if $\dim(G_{\lambda_1}) = d-1$,
        then $v^1 \in G_{\lambda_1}$ is a leading generalized eigenvector a.s.,
        \item if $\dim(G_\lambda) < d-1$ 
        and $v^k \not \in G_\lambda$, 
        then $v^{k+1} \not\in G_\lambda$ a.s.
    \end{enumerate}
    If $\# \spec((B^{\tT}B)^{-1/2}A^{\tT}A(B^{\tT}B)^{-1/2}) = 1$,
    then $v^0 \in G_{\lambda_1}$ is a generalized eigenvector a.s.
\end{proposition}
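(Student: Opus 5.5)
The plan is to reduce every assertion to Prop.~\ref{prop: measure zero affine space}~i), using that the generalized eigenvectors form a finite union of linear subspaces. Throughout, $\tilde G_\lambda \coloneqq G_\lambda \cup \{0\}$ denotes the corresponding linear subspace, namely the kernel of $A^{\tT}A - \lambda B^{\tT}B$. Two further facts will be used: $\hat v/\norm{\hat v} \sim \mc U(\bb S^{d-1})$ for $\hat v \sim \mc N(0,I_d)$ by \eqref{eq: x^k}, and $x^k$ is independent of $v^k$ by Rem.~\ref{rem: dist x^k v^k}. The independence lets me argue conditionally on a fixed $v^k$.

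For i), note that if $\#\spec > 1$ then each $\tilde G_{\lambda_i}$ is a proper subspace, so $\dim < d$. The set of generalized eigenvectors on the sphere is $\bigcup_{\lambda} \varphi(\tilde G_\lambda)$, a finite union. By Prop.~\ref{prop: measure zero affine space}~i) each piece is a null set, hence so is the union. For the final statement, $\#\spec = 1$ means the symmetric matrix $(B^{\tT}B)^{-1/2}A^{\tT}A(B^{\tT}B)^{-1/2}$ equals $\lambda_1 I_d$. Hence $A^{\tT}A = \lambda_1 B^{\tT}B$, and every $v^0$ lies in $G_{\lambda_1}$; this holds surely, so in particular a.s.

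For iii), fix $v^k \notin G_\lambda$. In every case of Thm.~\ref{thm: svd}, whether $\tau_k$ is finite or $v^{k+1}=x^k$, the new iterate $v^{k+1}$ is a nonzero vector of $\spa\{v^k,x^k\}$. Suppose $v^{k+1} = s v^k + t x^k \in \tilde G_\lambda$. Then $t \neq 0$, since otherwise $v^k \in G_\lambda$. Therefore $x^k \in \spa\{v^k\} + \tilde G_\lambda$, a linear subspace of dimension at most $\dim G_\lambda + 1 < d$. Prop.~\ref{prop: measure zero affine space}~i) makes this event a $\bb P_{X^k}$-null set. Integrating over the law of $v^k$, using independence, gives the a.s.\ statement.

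For ii), which I expect to be the main step, I would proceed as follows.
\begin{enumerate}[label=(\alph*)]
\item By i), $v^0$ is a.s.\ not a generalized eigenvector. By Lem.~\ref{lem: N_k pos def}, a.s.\ $x^0 \notin \spa\{v^0\}$ and $N_0 \succ 0$, so $E \coloneqq \spa\{v^0,x^0\}$ is two-dimensional.
\item By the dimension formula, $\dim(E \cap \tilde G_{\lambda_1}) \geq 2 + (d-1) - d = 1$. So $E$ contains a nonzero $w \in G_{\lambda_1}$ with $f(w) = \lambda_1$.
\item By Thm.~\ref{thm: svd}, the line search maximizes $f$ over all projective directions of $E$, namely $\{v^0+\tau x^0\} \cup \{x^0\}$. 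Hence $f(v^1) = \max_{E\setminus\{0\}} f \geq \lambda_1$. Since $\lambda_1$ is the global maximum, $f(v^1) = \lambda_1$.
\item A vector attains the global maximum of the Rayleigh quotient exactly when it is a critical point with value $\lambda_1$. This follows from \eqref{eq: eigen}, or from the spectral decomposition in $w$-coordinates. Hence $v^1 \in G_{\lambda_1}$.
\end{enumerate}
The delicate points are the ambiguity of the maximizer and the degenerate case $\tau_0=\infty$. The first is harmless, since any maximizer has value $\lambda_1$. The second is also harmless, since the maximizer $x^0$ then also has value $\lambda_1$.
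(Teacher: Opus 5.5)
Your proof is correct and rests on the same two ideas as the paper's. First, the directions that could carry $v^k$ into a small eigenspace project to a null set on the sphere (Prop.~\ref{prop: measure zero affine space}). Second, when $\dim(G_{\lambda_1})=d-1$ the two-dimensional search plane is forced to meet $G_{\lambda_1}$. The packaging differs, and yours is tighter.

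\textbf{Part iii).} The paper works with the affine translate $S_{v,\lambda}=-v+G_\lambda$ from Lem.~\ref{lem: set directions}. It identifies $\bb P(v^{k+1}\in G_\lambda\mid V^k=v^k)$ with $\bb P(x^k\in\varphi(S_{v,\lambda}))$. Since $0\notin S_{v,\lambda}$, this really needs part ii) of Prop.~\ref{prop: measure zero affine space}, and the sign of $\tau_k$ and the case $\tau_k=\infty$ are left implicit. You instead enclose the bad event in $\{x^k\in\spa\{v^k\}+\tilde G_\lambda\}$, a linear subspace of dimension at most $\dim(G_\lambda)+1<d$. Part i) then applies directly, all cases of the line search are covered by one containment, and Lem.~\ref{lem: set directions} is not needed.

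\textbf{Part ii).} The paper parametrizes $S_{v^0,\lambda_1}$ in a basis adapted to $G_{\lambda_1}$ and asserts that the sampled direction ``lies in'' it with probability one. What is meant is that the line $v^0+\tau x^0$ meets $G_{\lambda_1}$, i.e.\ $x^0\notin\tilde G_{\lambda_1}$. The paper then leaves implicit two facts: the exact projective line search attains $\lambda_1$, and every maximizer of $f$ lies in $G_{\lambda_1}$. Your dimension count $\dim(E\cap\tilde G_{\lambda_1})\ge 1$, together with steps (c)--(d), makes exactly these two facts explicit. It also does not require $v^0\notin G$, so the appeal to i) in step (a) is superfluous; only $x^0\notin\spa\{v^0\}$ and $N_0\succ0$ are needed.

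Net effect: your version is a more self-contained and precise rendering of the paper's argument, at no extra cost.
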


The latter proposition 
claims that the proposed method converges 
either after zero or one iteration 
(the convergence after finitely many iterations)
or proceeds, inductively,  for $k \to \infty$
by Lem.~\ref{lem: gamma_k}.
For proving Prop.~\ref{prop: update},
the next lemma discusses the dimension of the projected sets of search directions
as affine subspaces.

\begin{lemma}
    \label{lem: set directions}
    Let $v \in \bb S^{d-1}$ be no generalized eigenvector of $(A^{\tT}A, B^{\tT}B)$,
    i.e. it holds $v \not\in G_\lambda$ 
    for any $\lambda \in \spec((B^{\tT}B)^{-1/2}A^{\tT}A(B^{\tT}B)^{-1/2})$.
    Then 
    \begin{equation*}
        S_{v,\lambda} \coloneqq \{x \in \bb R^d \mid v + x \in G_\lambda\} \not\ni 0
    \end{equation*}
    is an affine space and has dimension $\dim(G_\lambda)$.
\end{lemma}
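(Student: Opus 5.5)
The plan is to identify $S_{v,\lambda}$ as a translate of a linear subspace. The subtlety is that $G_\lambda$ as defined excludes $0$. So I would work with the linear space
\[
\bar G_\lambda \coloneqq G_\lambda \cup \{0\} = \ker(A^{\tT}A - \lambda B^{\tT}B),
\]
which is a linear subspace of $\bb R^d$. Its dimension is the geometric multiplicity of $\lambda$ as an eigenvalue of $(B^{\tT}B)^{-1/2}A^{\tT}A(B^{\tT}B)^{-1/2}$, via the change of variables $w=(B^{\tT}B)^{1/2}v$ from \eqref{eq: eigen}. This is what $\dim(G_\lambda)$ refers to.

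First I would observe that
\[
\{x \in \bb R^d \mid v + x \in \bar G_\lambda\} = -v + \bar G_\lambda,
\]
which is an affine subspace of dimension $\dim(\bar G_\lambda)$, being a translate of a linear space. Next I would check that removing the point $x=-v$, which corresponds to $v+x=0\notin G_\lambda$, does not disturb anything. I would also check that $0\notin S_{v,\lambda}$: indeed $0\in S_{v,\lambda}$ iff $v\in G_\lambda$, which the hypothesis excludes.

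The same hypothesis gives $v\notin \bar G_\lambda$, since $v\neq 0$ as $v\in\bb S^{d-1}$. Hence the affine space $-v+\bar G_\lambda$ does not contain the origin, equivalently it is a proper translate. Moreover, the point $-v$ lies in $-v+\bar G_\lambda$, since $0\in\bar G_\lambda$. So strictly
\[
S_{v,\lambda} = (-v+\bar G_\lambda)\setminus\{-v\}.
\]
I would state that $S_{v,\lambda}$ is an affine space up to this single point, or equivalently identify it with $-v+\bar G_\lambda$ for the purposes of Prop.~\ref{prop: measure zero affine space}. Since $\varphi(-v)=-v$ is a single point of measure zero, this changes nothing downstream. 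The dimension claim is then immediate.

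The only real obstacle is this bookkeeping about the excluded zero vector in the definition of $G_\lambda$. I would handle it by explicitly passing to the closure $\bar G_\lambda$ and noting the one-point discrepancy. The intended downstream use is Prop.~\ref{prop: measure zero affine space}~ii), applied with $0\notin M$ and $\dim M<d-1$ in Prop.~\ref{prop: update}~iii). For that use, the key facts to record are $0\notin -v+\bar G_\lambda$ and $\dim(-v+\bar G_\lambda)=\dim \bar G_\lambda$.
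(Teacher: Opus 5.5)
Your proposal is correct and follows the same route as the paper: write $S_{v,\lambda}$ as the translate $(-v)+G_\lambda$, read off the dimension from $G_\lambda$, and get $0\notin S_{v,\lambda}$ from $v\notin G_\lambda$. Your extra care about the excluded zero vector, giving strictly $S_{v,\lambda}=\bigl(-v+\ker(A^{\tT}A-\lambda B^{\tT}B)\bigr)\setminus\{-v\}$, is a valid refinement that the paper's proof glosses over, and it is harmless downstream; you are also right that the later application needs part~ii) of Prop.~\ref{prop: measure zero affine space}, not part~i) as cited in the proof of Prop.~\ref{prop: update}~iii).
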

\begin{proof}
    Recall that the set $S_{v,\lambda}$ can be rewritten in the following manner:
    \begin{equation*}
        x \in S_{v,\lambda} \quad \Leftrightarrow \quad 
        \exists u \in G_\lambda : v + x = u
        \quad \Leftrightarrow \quad 
        x \in (-v) + G_\lambda.
    \end{equation*}
    This directly provides by the dimension of $G_\lambda$ the dimension of $S_{v,\lambda}$.
    Assume $0 \in S_{v,\lambda}$, this implies $v \in G_\lambda$,
    and contradicts the assumption; 
    this finishes the proof.
\end{proof}

Now Prop.~\ref{prop: measure zero affine space}
i.e.~\cite[Prop.~6.5 \& Cor.~6.12]{Lee2012smoothmanifolds},
can be utilized 
on the finite union of the sets $S_{v,\lambda}$ 
for all $\lambda \in \spec((B^{\tT}B)^{-1/2}A^{\tT}A(B^{\tT}B)^{-1/2})$
and one fixed $v \in \bb S^{d-1}$.
Therefore,
we consider the union of finitely many generalized eigenspaces
\begin{equation*}
    \label{eq: all gen eigen spaces}
    G \coloneq \bigcup_{\lambda \in \spec((B^{\tT}B)^{-1/2}A^{\tT}A(B^{\tT}B)^{-1/2})} G_\lambda
\end{equation*}

\begin{proof}[Proof of Prop.~\ref{prop: update}]
    Assume, firstly, $\# \spec((B^{\tT}B)^{-1/2}A^{\tT}A(B^{\tT}B)^{-1/2}) > 1$:
    For part i). 
    By Prop.~\ref{prop: measure zero affine space}~i),
    the sets 
    $\{\nicefrac{x}{\norm{x}} \cap G_\lambda : x \in \bb R^d\setminus\{0\}\} = \varphi(G_\lambda \setminus \{0\}\}$
    are of measure zero with respect to $\mc U(\bb S^{d-1})$ the uniform distribution 
    since $0 \in G_\lambda$ and $\dim(G_\lambda) \leq d-1 < d$
    according to the construction of $v^0$, i.e. $\bb P_{V^0} = \mc U(\bb S^{d-1})$.
    Hence,
    we have
    \begin{equation}
        \label{eq: prop V^0}
        \bb P_{V^0}(v^0 \in G_\lambda) = 0
        \quad \text{such that} \quad 
        \bb P_{V^0}(v^0 \in G) = 0
    \end{equation}
    by the $\sigma$-subadditivity of the probability measure.
    For part ii). By i) follows that $v^0$ is not a generalized eigenvector a.s., 
    i.e. with $v^0 \not\in G$ it holds 
    $$
        \lambda_1
        \geq \nicefrac{\scp{A^{\tT}A v^1}{v^1}}{\scp{B^{\tT}B v^1}{v^1}} 
        > \nicefrac{\scp{A^{\tT}A v^0}{v^0}}{\scp{B^{\tT}B v^0}{v^0}} 
        > \lambda_2,
        \qquad \text{a.s}.
    $$
    Moreover, by Lem.~\ref{lem: set directions}, 
    it holds $\dim(S_{v^0,\lambda_1}) = \dim(G_{\lambda_1}) = d-1$ a.s.
    Since $v^0 \not\in G_{\lambda_1} \cap \bb S^{d-1}$,
    especially $v^0$ is linearly independent to any basis of $G_{\lambda_1}$ a.s.,
    there is an orthonormal basis $\{u_j\}_{j = 1}^{d-1}$ of $G_{\lambda_1}$ such that $\{v^0, u_1,..., u_{d-1}\}$
    is a basis of $\bb R^d$.
    Now, consider 
    \begin{equation*}
        x \in S_{v^0, \lambda_1} 
        \quad \Leftrightarrow\quad 
        v^0 + x \in G_{\lambda_1}
        \quad \Leftrightarrow\quad 
        x = -\eta_0 v^0 + \sum_{j = 1}^{d-1} \eta_j u_j
    \end{equation*}
    for some $\{\eta_j\}_{j = 0}^{d-1} \in \bb R$.
    This construction follows vice versa
    such that 
    with $\bb P_{X^1} = \mc U(\bb S^{d-1})$, cf.~Rem.~\ref{rem: dist x^k v^k},
    holds
    \begin{equation*}
        \bb P_{V^1 \mid V^0 = v^0}(v^1 \in G_{\lambda_1})
        = \bb P_{X^1}(x^1 \in S_{v^0, \lambda_1})
        = 1,
        \qquad v^0 \not \in G
    \end{equation*}
    By the law of total probability and with \eqref{eq: prop V^0} holds
    \begin{equation*}
        \bb P_{V^1}(v^1 \in G_{\lambda_1})
        = \int_{\bb S^{d-1} \setminus G}
        \bb P_{V^1 \mid V^0 = v^0}(v^1 \in G_{\lambda_1})
        \; \mathrm{d} \bb P_{V^0}(v^0)
        = \bb P_{V^0}(v^0 \not\in G)
        = 1
    \end{equation*}
    and the assertion follows.
    For part iii). First, by Lem.~\ref{lem: set directions},
    it holds $S_{v,\lambda} \not\ni 0$ and is of dimension $\dim(G_{\lambda}) < d-1$ by assumption.
    To this end, 
    Prop.~\ref{prop: measure zero affine space}~i) yields that 
    \begin{equation*}
        \varphi(S_{v,\lambda})
        = \bigl\{\tfrac{x}{\norm{x}} \;\scalebox{1.2}{$\mid$}\; x \in S_{v,\lambda}\bigr\} 
        \subset \bb S^{d-1}
    \end{equation*}
    is a measure zero set with respect to $\mc U(\bb S^{d-1})$ 
    the uniform distribution on $\bb S^{d-1}$.
    Hence
    \begin{equation*}
        \bb P_{v^{k+1} \sim V^{k+1} \mid V^k = v^k}(v^{k+1} \in G_\lambda)
        = \bb P_{x^k \sim \mc U(\bb S^{d-1})}(x^k \in \varphi(S_{v,\lambda}))
        = 0,
        \qquad v \not \in G_\lambda,
    \end{equation*}
    and by the law of total probability 
    we obtain 
    \begin{equation*}
        \bb P_{v^{k+1}\sim V^{k+1}}(v^{k+1} \in G_\lambda)
        = \int_{\bb S^{d-1} \setminus G_{\lambda}} \hspace{-27pt}
        \bb P_{v^{k+1} \sim V^{k+1} \mid V^k = v^k}(v^{k+1} \in G_\lambda)
        \; \mathrm{d} \bb P_{v^k \sim V^k}(v^k)
        = 0.
    \end{equation*}
    Finally,
    by the $\sigma$-subadditivity of the probability measure 
    it holds 
    $$
        \bb P_{v^{k+1} \sim V^{k+1}}(v^{k+1} \in G) = 0,
    $$
    yielding the assertion.

    Assume, 
    secondly, $\# \spec((B^{\tT}B)^{-1/2}A^{\tT}A(B^{\tT}B)^{-1/2}) = 1$: 
    This is the pathological case, 
    and any $v \in \bb S^{d-1}$ is a generalized eigenvector, 
    and so a.s. is $v^0$ by construction.
\end{proof}

As a first and immediate consequence
we can provide the following convergence result for the generated sequence 
of functional values 
\begin{equation}
    \label{eq: update}
    \bigl(f(v^k)\bigr)_{k \in \bb N} 
    = \Bigl(\tfrac{\norm{A v}^2}{\norm{B v}^2}\Bigr)_{k \in \bb N}
    = \Bigl(\tfrac{\scp{v}{A^{\tT}A v}}{\scp{v}{B^{\tT}B v}}\Bigr)_{k \in \bb N}
    = \bigl(\tfrac{a_k}{d_k}\bigr)_{k \in \bb N}.
\end{equation}
Due to Prop.~\ref{prop: update}
it suffices to discuss the cases
where we do not have convergence to the leading generalized eigenspace $G_{\lambda_1}$
after finitely many iterations,
i.e. where $\dim(G_\lambda) < d-1$, or, more specifically, $\dim(G_{\lambda_1}) < d-1$.

\begin{corollary}
    \label{cor: monoton conv}
    Let $\dim(G_{\lambda_1}) < d-1$ 
    and $(a_k)_{k \in \bb N}$ and $(d_k)_{k \in \bb N}$ are the sequences
    of random variables 
    generated by Alg.~\ref{alg: main}. 
    Then the sequence $(\tfrac{a_k}{d_k})_{k \in \bb N}$
    from \eqref{eq: update} converges strictly monotonically increasing, a.s.
\end{corollary}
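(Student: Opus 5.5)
Since $\dim(G_{\lambda_1}) < d-1$, no iterate should ever land in a generalized eigenspace, so Lem.~\ref{lem: gamma_k} applies at every step. By Thm.~\ref{thm: svd}, each update then takes the exact projective maximizer of $s_k$, and this maximizer is strictly better than the current point. The sequence $(a_k/d_k)_{k\in\bb N}$ is bounded above by $\lambda_1$, so strict monotonicity gives convergence by the monotone convergence theorem. It therefore suffices to show that $a_{k+1}/d_{k+1} > a_k/d_k$ for every $k$, almost surely.

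The first step is an almost-sure induction on $k$ showing $v^k\notin G$. The base case is Prop.~\ref{prop: update}~i). For the induction step, note that $\dim(G_{\lambda_1})<d-1$ alone does not cover every eigenspace, since another eigenspace could have dimension $d-1$. In that case, however, every other eigenvalue is simple and $\dim(G_{\lambda_1})=1$, which forces $d=2$ or a two-eigenvalue spectrum. I would check this separately, or simply restrict attention to $G_{\lambda_1}$ together with those eigenspaces where Prop.~\ref{prop: update}~iii) applies.

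Prop.~\ref{prop: update}~iii) with the law of total probability gives $\bb P(v^{k+1}\in G_\lambda)=0$ for each $\lambda$ with $\dim(G_\lambda)<d-1$. Countable subadditivity over $k$ and over the finitely many $\lambda$ then yields that almost surely $v^k\notin G$ for all $k$ simultaneously. Likewise Lem.~\ref{lem: N_k pos def} gives $N_k\succ0$ for all $k$ almost surely.

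The second step proves strict increase conditionally on $v^k\notin G$. Since $s_k(0)=a_k/d_k$ and $s_k'(0)=2\alpha_k/d_k^2$ by \eqref{eq: sub prob stationary}, Lem.~\ref{lem: gamma_k} gives $\alpha_k\neq0$ almost surely, so $\tau=0$ is not stationary. Hence $\max_\tau s_k(\tau) > s_k(0)$: moving slightly in the direction $\sign(\alpha_k)$ already increases $s_k$. By Thm.~\ref{thm: svd} the update attains the projective maximum, either the finite $\tau_k$ from \eqref{eq: opt tau} or $v^{k+1}=x^k$ when $\tau_k=\infty$. By the scale invariance of $f$, $f(v^{k+1})=\max s_k > f(v^k)$. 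Combining this with the first step over all $k$, and using countable intersection of almost-sure events, gives the claim.

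The main obstacle is the first step, namely making the almost-sure avoidance of $G$ rigorous for all eigenspaces under only the hypothesis on $G_{\lambda_1}$. I would first try to argue that an eigenspace of dimension $d-1$ other than $G_{\lambda_1}$ cannot trap the iterates. Monotonicity already forces $f(v^{k})>f(v^0)$, which exceeds the corresponding eigenvalue except when it is $\lambda_1$, so only $G_{\lambda_1}$ is reachable anyway. This observation should close the gap.
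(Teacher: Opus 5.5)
Your proof is correct and follows the same approach as the paper, which gives no written proof and only calls the corollary an ``immediate consequence'' of Prop.~\ref{prop: update}: almost-sure avoidance of $G$, then $\alpha_k\neq0$ by Lem.~\ref{lem: gamma_k}, so $s_k'(0)\neq0$ and the exact projective step of Thm.~\ref{thm: svd} strictly improves $f$, and boundedness by $\lambda_1$ gives convergence. Your extra handling of a possible $(d-1)$-dimensional non-leading eigenspace (necessarily $G_{\lambda_d}$ in a two-eigenvalue spectrum) fills a point the paper passes over; state it with the deterministic weak inequality $f(v^{k+1})=\max s_k\geq s_k(0)=f(v^k)$ together with $f(v^0)>\lambda_d$ a.s., so it is visibly non-circular.
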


\subsection{Almost Sure Optimal Convergence}
\label{sec: as optimal convergence}

Different from the theoretical results from
\cite[Lem.~2.12~\&~2.13]{B9}
and
\cite[Thm. 5.1 \&~5.2]{B9},
it is not possible to directly prove 
from the strict convergence of the sequence 
of the functional values, cf.~Cor.~\ref{cor: monoton conv},
the convergence $\alpha_k = b_kd_k - a_ke_k \to 0$ for $k \to \infty$, 
almost surely.
However, 
the next result proves directly that the directional derivative at the current iterate vanishes asymptotically.
This argument does not require boundedness of the
exact line-search steps, which can fail near degenerate stationary equations.
From this the following statement can be proven.

\begin{theorem}
    \label{thm: a_k conv}
    Let $\dim(G_{\lambda_1}) < d-1$
    and $(a_k)_{k \in \bb N}$, $(b_k)_{k \in \bb N}$, $(c_k)_{k \in \bb N}$,
    $(d_k)_{k \in \bb N}$, $(e_k)_{k \in \bb N}$, and $(f_k)_{k \in \bb N}$
    be the sequences of random variables generated by Alg.~\ref{alg: main}.
    Then $\alpha_k = b_kd_k - a_ke_k \to 0$ for $k \to \infty$, a.s.
\end{theorem}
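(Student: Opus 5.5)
Since $s_k'(0)=2\alpha_k/d_k^2$ by \eqref{eq: sub prob stationary}, I will show that the directional derivative $s_k'(0)$ of $f$ at $v^k$ along the sampled direction $x^k$ tends to zero a.s. The fact that $d_k\in[\lambda_d(B^{\tT}B),\norm{B}^2]$ then transfers this to $\alpha_k$. The argument has three parts: a deterministic sufficient-increase inequality for the exact line search, summation using monotone convergence, and a conditional Borel–Cantelli argument. No bound on $\tau_k$ is needed.

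\textbf{Sufficient increase.} By Cor.~\ref{cor: monoton conv}, $f(v^k)$ is nondecreasing and bounded by $\lambda_1$, so it converges a.s. and $\sum_k \bigl(f(v^{k+1})-f(v^k)\bigr)\le \lambda_1-f(v^0)<\infty$. Since $\tau_k$ is a global maximizer of $s_k$ (Thm.~\ref{thm: svd}), I have $f(v^{k+1})\ge s_k(t)$ for every $t\in\bb R$. I will compare with the small step $t$ along the curve $t\mapsto R_{v^k}(tx^k)\in\bb S^{d-1}$. Using Lem.~\ref{lem: lipschitz grad}, with $L$ the gradient Lipschitz constant, and the scale invariance $f(v^k+tx^k)=f(R_{v^k}(tx^k))$, a descent-lemma type bound gives
\begin{equation*}
s_k(t)\ge f(v^k)+t\,s_k'(0)-\tfrac{\tilde L}{2}t^2,\qquad |t|\le 1/2,
\end{equation*}
with a deterministic constant $\tilde L$ depending only on $A$ and $B$. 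Here $|t|\le 1/2$ keeps $\norm{v^k+tx^k}\ge 1/2$, which controls the second derivative of the retraction curve uniformly. I would rather bound $|s_k''|$ directly from the explicit rational form \eqref{eq: sub prob func}. The denominator is bounded below by $\lambda_d(B^{\tT}B)\norm{v^k+tx^k}^2\ge\lambda_d(B^{\tT}B)/4$, and the coefficients are bounded by $\norm{A}^2,\norm{B}^2$, which gives $\tilde L$ explicitly. Taking $t=\operatorname{clip}(s_k'(0)/\tilde L,\pm1/2)$ then yields
\begin{equation*}
f(v^{k+1})-f(v^k)\ge c\min\{s_k'(0)^2,|s_k'(0)|\}.
\end{equation*}
Hence $\sum_k\min\{s_k'(0)^2,|s_k'(0)|\}<\infty$ surely, not merely a.s., and therefore $s_k'(0)\to0$ and $\alpha_k\to 0$.

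On a first reading this already proves the claim deterministically, since every realization satisfies the inequality. No Borel–Cantelli step is required for $\alpha_k$ itself. The probabilistic content (Lem.~\ref{lem: gamma_k}, Lem.~\ref{lem: N_k pos def}, Cor.~\ref{cor: monoton conv}) is used only to ensure that every step is well defined, i.e.\ $N_k\succ0$ and the maximizer exists, along a.s.\ every path. The statement is "a.s." precisely because of these null sets, plus the countable union over $k$. I will also handle the case $\tau_k=\infty$: there $f(v^{k+1})=f(x^k)=\lim_{|t|\to\infty}s_k(t)$ is still the supremum over $t$, so the inequality persists.

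\textbf{Main obstacle.} The delicate step is the uniform second-order bound along the projective line. Lem.~\ref{lem: lipschitz grad} is stated on $\bb S^{d-1}$, while the segment $v^k+tx^k$ leaves the sphere and can approach $0$ for large $|t|$. Restricting to $|t|\le1/2$ fixes this, and I will verify the constant $\tilde L$ by direct differentiation of \eqref{eq: sub prob func} rather than by appealing to the lemma. If that turns out messy, a fallback is to use $f(v^k+tx^k)$ with the Euclidean Hessian \eqref{eq: hess special}, bounded on the annulus $\{1/2\le\norm{w}\le 3/2\}$ via the homogeneity $\nabla^2 f(w)=\norm{w}^{-2}\nabla^2 f(w/\norm{w})$.
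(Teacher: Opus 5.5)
Your proposal is correct and follows essentially the paper's route: the identity $\alpha_k=\tfrac{d_k^2}{2}s_k'(0)$, the uniform bound $s_k(\tau)-s_k(0)\geq s_k'(0)\tau-\tfrac{L}{2}\tau^2$ for $\abs{\tau}\leq\tfrac12$ (Cor.~\ref{cor: R_k}, which the paper justifies on the annulus $\tfrac12\leq\norm{z}\leq\tfrac32$, as you plan to), maximality of the exact line search, and boundedness of the nondecreasing sequence $f(v^k)\leq\lambda_1$. The only difference is the last step: the paper argues by contradiction with a fixed step $\tau<\min\{\tfrac12,\varepsilon/L\}$ along a subsequence with $\abs{s_{k_j}'(0)}\geq\varepsilon$, whereas your clipped step and telescoping sum give the slightly stronger pathwise summability $\sum_k\min\{s_k'(0)^2,\abs{s_k'(0)}\}<\infty$ (and you handle the case $\tau_k=\infty$ explicitly, which the paper leaves implicit).
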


For proving the almost sure convergence of
\[
    \alpha_k = \tfrac{d_k^2}{2}s_k'(0) \to 0,
    \quad k \to \infty
    \quad \text{(a.s.)}
\]
we use a uniform local Taylor bound.
For the equation, see \eqref{eq: sub prob stationary}.

\begin{corollary}
    \label{cor: R_k}
    There exists a constant $L>0$, independent of $k\in \bb N$, such that
    \begin{equation*}
        s_k(\tau) - s_k(0)
        \geq s_k'(0) \tau - \tfrac{L}{2} \tau^2
        \qquad
        \forall |\tau| \leq \tfrac{1}{2}.
    \end{equation*}
\end{corollary}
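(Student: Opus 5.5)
Since $s_k(\tau)=f(v^k+\tau x^k)$ with $v^k,x^k\in\bb S^{d-1}$, the plan is to apply Taylor's theorem with Lagrange remainder to the scalar function $s_k$ on $[-\tfrac12,\tfrac12]$. For some $\theta$ between $0$ and $\tau$,
\begin{equation*}
    s_k(\tau)-s_k(0)=s_k'(0)\tau+\tfrac12 s_k''(\theta)\tau^2 .
\end{equation*}
It therefore suffices to find a constant $L>0$, independent of $k$, $v^k$ and $x^k$, such that $|s_k''(\theta)|\le L$ for all $|\theta|\le\tfrac12$. This is a deterministic bound, so no almost-sure statements are needed.

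First I would show that the denominator is uniformly bounded away from zero. For $|\theta|\le\tfrac12$ the triangle inequality gives $\norm{v^k+\theta x^k}\ge 1-|\theta|\ge\tfrac12$. Hence
\begin{equation*}
    d_k+2\theta e_k+\theta^2 f_k=\norm{B(v^k+\theta x^k)}^2\ge \lambda_d(B^{\tT}B)\norm{v^k+\theta x^k}^2\ge \tfrac14\lambda_d(B^{\tT}B)>0 .
\end{equation*}
This is exactly why the radius $\tfrac12$ appears: it avoids the degenerate points $\tau=\mp1$ with $x^k=\pm v^k$, so the bound holds for every sample, not only almost surely.

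Next I would bound the second derivative. The chain rule gives $s_k''(\theta)=\scp{x^k}{\nabla^2 f(w)\,x^k}$ with $w=v^k+\theta x^k$, where $\norm{w}\in[\tfrac12,\tfrac32]$. By the explicit Hessian \eqref{eq: hess special}, every term of $\nabla^2 f(w)$ is a product of entries bounded by powers of $\norm{A}$, $\norm{B}$ and $\norm{w}\le\tfrac32$, divided by powers of $\norm{Bw}^2\ge\tfrac14\lambda_d(B^{\tT}B)$. In addition, $f(w)\le\norm{A}^2/\lambda_d(B^{\tT}B)$. Summing gives $\norm{\nabla^2 f(w)}\le L$ with $L$ an explicit polynomial in $\norm{A}$, $\norm{B}$ and $\lambda_d(B^{\tT}B)^{-1}$, which is independent of $k$. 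Since $\norm{x^k}=1$, it follows that $|s_k''(\theta)|\le L$.

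As an alternative route, I could work directly with the quotient $s_k=p/q$, where $p(\tau)=a_k+2b_k\tau+c_k\tau^2$ and $q(\tau)=d_k+2e_k\tau+f_k\tau^2$. Then $s_k''=(p''q^2-pqq''-2p'q'q+2pq'^2)/q^3$. The coefficients satisfy $|a_k|,|b_k|,|c_k|\le\norm{A}^2$ and $|d_k|,|e_k|,|f_k|\le\norm{B}^2$, so the numerator is bounded on $|\theta|\le\tfrac12$, while the denominator satisfies $q^3\ge(\lambda_d(B^{\tT}B)/4)^3$.

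The only real obstacle is the uniform lower bound on the denominator. It must not rely on $x^k\notin\spa\{v^k\}$, and the restriction $|\tau|\le\tfrac12$ handles this cleanly. Everything after that is routine bounding.
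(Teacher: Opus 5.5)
Your proposal is correct and is essentially the paper's argument: Taylor's theorem with Lagrange remainder for $s_k$, plus a uniform bound on $s_k''(\theta)=\scp{x^k}{\nabla^2 f(v^k+\theta x^k)\,x^k}$ over the annulus $\tfrac12\le\norm{v^k+\theta x^k}\le\tfrac32$, where $B^{\tT}B\succ0$ keeps the denominator away from zero. Your explicit bound $\norm{B(v^k+\theta x^k)}^2\ge\tfrac14\lambda_d(B^{\tT}B)$ makes $L$ more transparent than the paper's appeal to Lem.~\ref{lem: lipschitz grad}, which is stated only on the sphere. Of your two routes, the quotient-rule one is the safer, since it does not rely on \eqref{eq: hess special}: there the coefficient of $(B^{\tT}Bv)(B^{\tT}Bv)^{\tT}$ is missing a factor $\norm{Av}^2$, although boundedness is unaffected.
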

\begin{proof}
    For \smash{$|\tau| \leq \tfrac{1}{2}$}
    and $v^k,x^k\in\bb S^{d-1}$ it holds 
    \smash{$\frac{1}{2} \leq \norm{v^k+\tau x^k} \leq\frac{3}{2}$}.
    By the scale invariance
    consider \smash{$\tau \mapsto s_k(\tau) = f(v^k + \tau x^k) = f(\tfrac{v^k + \tau x^k}{\norm{v^k + \tau x^k}})$}.
    This is a concatenation of the objective $f$, cf. ~\ref{eq: obj},
    and $\tau \mapsto v + \tau x$ for $v, x \in \bb S^{d-1}$.
    Then,
    by Taylor's expansion,
    independent from $k \in \bb N$,
    we obtain
    \begin{equation}
        \label{eq: taylor hess}
        s_k(\tau) - s_k(0) 
        = \tau s_k'(0) + \tfrac{\tau^2}{2} s_k''(\vartheta \tau),
        \quad \text{for some} \quad
        \vartheta \in (0,1).
    \end{equation}
    By the chain rule 
    and Lem.~\ref{lem: lipschitz grad}, we have 
    \begin{displaymath}
        |s_k''(\vartheta \tau)|
        \leq |x^{\tT} \nabla^2 f(v + \vartheta \tau x) x|
        \leq L \norm{x}^2
        = L,
    \end{displaymath}
    Hence all line segments under consideration remain in the compact annulus
    $\{z\in\bb R^d: \tfrac{1}{2}\leq\norm z\leq \tfrac{3}{2}\}$, on which the extended quotient
    $f(z)=\norm{Az}^2/\norm{Bz}^2$ has a uniformly bounded Hessian because
    $B^{\tT}B\succ0$, i.e. the extended gradient has a Lipschitz constant, 
    cf.~Lem.~\ref{lem: lipschitz grad}.
    Taylor's theorem, i.e.~\eqref{eq: taylor hess}, 
    gives
    \begin{equation*}
        s_k(\tau)-s_k(0)
        =s_k'(0)\tau+\frac{\tau^2}{2}
        (x^k)^{\tT}\nabla^2f(v^k+\vartheta\tau x^k)x^k
        \geq s_k'(0)\tau-\frac{L}{2}\tau^2,
    \end{equation*}
    where $L > 0$ is the Lipschitz constant from Lem.~\ref{lem: lipschitz grad}.
\end{proof}

\begin{proof}[of Thm.~\ref{thm: a_k conv}]
    Assume that $\abs{s_k'(0)}$ does not converge to zero.
    Then there are
    $\varepsilon>0$ and a subsequence \smash{$(k_j)_{j \in \bb N}$} such that
    \smash{$|s_{k_j}'(0)|\geq\varepsilon$}. 
    Choose \smash{$0<\tau<\min\{\tfrac{1}{2},\tfrac{\varepsilon}{L}\}$} and set
    $\widetilde\tau_k=\sign(s_k'(0))\tau$. 
    Then, Cor.~\ref{cor: R_k} yields
    \begin{equation*}
        s_{k_j}(\tilde \tau_{k_j}) - s_{k_j}(0)
        \geq |s'_{k_j}(0)| \cdot |\tau| - \tfrac{\varepsilon}{2}|\tau| 
        \geq \varepsilon \abs{\tau} - \tfrac{\varepsilon}{2}|\tau| 
        \geq \tfrac{\varepsilon}{2}\abs{\tau}
        > 0.
        \quad j \in \bb N.
    \end{equation*}
    Since $\tau_{k_j}$ maximizes the line search, 
    the same positive lower bound
    holds for
    \begin{align*}
        s_{k_j}(\tau_{k_j}) - s_{k_j}(0) 
        \geq s_{k_j}(\tilde \tau_{k_j}) - s_{k_j}(0)
        \geq \tfrac{\varepsilon}{2}\abs{\tau}
        > 0,
        j \in \bb N.
    \end{align*}
    This contradicts convergence of
    the monotone bounded sequence in Cor.~\ref{cor: monoton conv}. 
    Thus $s_k'(0)\to0$. Since $\alpha_k = \tfrac{d_k^2}{2}s_k'(0)$ and
    $d_k\leq\norm B^2$, the assertion follows.
\end{proof}

\begin{lemma}
    \label{lem: EE_a_k}
    Let $\dim(G_{\lambda_1}) < d-1$
    and $(v^k)_{k \in \bb N}$
    and $(x^k)_{k \in \bb N}$ be the sequence of random variables 
    generated by Alg.~\ref{alg: main}.
    It holds 
    \begin{align*}
        \bb E_{x^k \sim \mc U(\bb S^{d-1}) \mid V^k = v^k}\Bigl[\abs{\tfrac{\alpha_k}{d_k}}^2\Bigr] 
        &= \tfrac{1}{d}\norm[\Big]{\Bigl(\tfrac{\scp{v^k}{A^{\tT}Av^k}}{\scp{v^k}{B^{\tT}Bv^k}} B^{\tT}B - A^{\tT}A\Bigr)v^k}^2 \\
        &= \tfrac{d_k^2}{4d}  \norm{\grad f(v^k)}^2
        \to 0,
        \qquad k \to \infty.
    \end{align*}
\end{lemma}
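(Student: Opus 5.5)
The plan is to compute the conditional second moment exactly, using the independence of $x^k$ from $v^k$ in Rem.~\ref{rem: dist x^k v^k}, and then obtain the limit from Thm.~\ref{thm: a_k conv}.

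\textbf{The identities.} I rewrite $\alpha_k$ as a linear functional of $x^k$. By \eqref{eq: para mgRq},
\begin{equation*}
    \alpha_k = b_kd_k - a_ke_k = \scp{x^k}{\bigl(d_k A^{\tT}A - a_k B^{\tT}B\bigr)v^k}.
\end{equation*}
Dividing by $d_k>0$ gives $\tfrac{\alpha_k}{d_k} = -\scp{x^k}{w^k}$, where
\begin{equation*}
    w^k \coloneqq \bigl(f(v^k)B^{\tT}B - A^{\tT}A\bigr)v^k .
\end{equation*}
The vector $w^k$ is $\sigma(V^k)$-measurable. The law of $x^k$ is $\mc U(\bb S^{d-1})$ independently of $V^k=v^k$, and $\bb E[x^k(x^k)^{\tT}]=\tfrac1d I_d$ by \eqref{eq: x^k}. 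Hence
\begin{equation*}
    \bb E\bigl[\abs{\tfrac{\alpha_k}{d_k}}^2 \,\big|\, V^k=v^k\bigr] = (w^k)^{\tT}\tfrac1d I_d\, w^k = \tfrac1d\norm{w^k}^2 ,
\end{equation*}
which is the first equality. For the second, \eqref{eq: euclidean grad} with $\norm{Bv^k}^2=d_k$ gives $\nabla f(v^k) = -\tfrac{2}{d_k}w^k$. By \eqref{eq: riemannian gradient}~f. we have $\grad f(v^k)=\nabla f(v^k)$, so $\norm{w^k}^2=\tfrac{d_k^2}{4}\norm{\grad f(v^k)}^2$.

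\textbf{The limit.} This is the main obstacle. Thm.~\ref{thm: a_k conv} gives $\alpha_k\to0$ a.s. for the \emph{sampled} directions, but the claim concerns the deterministic quantity $\norm{w^k}$ along a trajectory. I would argue by contradiction with a conditional Borel--Cantelli (Lévy) argument.

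Suppose that with positive probability $\norm{w^{k}}\geq\varepsilon$ for infinitely many $k$. Rotational invariance of $\mc U(\bb S^{d-1})$ gives a constant $p=p(d)>0$ such that
\begin{equation*}
    \bb P\bigl(\abs{\scp{x^k}{w^k}}\geq \tfrac{\varepsilon}{2\sqrt d} \,\big|\, V^k\bigr)\geq p \quad\text{on the event } \{\norm{w^k}\geq\varepsilon\}.
\end{equation*}
Therefore the conditional probabilities of the events $E_k = \{\abs{\alpha_k/d_k}\geq\tfrac{\varepsilon}{2\sqrt d}\}$ sum to infinity on that positive-probability event. Lévy's extension of Borel--Cantelli then shows that infinitely many $E_k$ occur there. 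This contradicts $\alpha_k\to0$ a.s., because $d_k\in[\lambda_d(B^{\tT}B),\norm B^2]$ is bounded above and below.

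Hence $\norm{w^k}\to0$ a.s., and so both expressions tend to zero a.s. If the expectation is also read unconditionally, the same conclusion follows by dominated convergence: $\abs{\alpha_k/d_k}\leq 2\norm A^2\bigl(1+\norm B^2/\lambda_d(B^{\tT}B)\bigr)$ is uniformly bounded.
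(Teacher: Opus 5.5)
Your proposal is correct. The two identities are obtained exactly as in the paper: you write $\alpha_k/d_k=-\scp{x^k}{w^k}$, use $\bb E[x^k(x^k)^{\tT}]=\tfrac1d I_d$, and use $\grad f(v^k)=\nabla f(v^k)=-\tfrac{2}{d_k}w^k$. The limit, however, is handled differently.

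The paper only proves convergence in expectation. Since $\alpha_k\to0$ a.s. by Thm.~\ref{thm: a_k conv} and $\abs{\alpha_k}/d_k$ is uniformly bounded, dominated convergence and the tower property give $\bb E\bigl[\tfrac1d\norm{w^k}^2\bigr]=\bb E\bigl[(\alpha_k/d_k)^2\bigr]\to0$. This is essentially your closing remark. Prop.~\ref{prop: as conv eigenspace} then extracts an almost surely convergent subsequence.

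Your L\'evy--Borel--Cantelli argument instead gives $\norm{\grad f(v^k)}\to0$ almost surely along the whole trajectory. This is strictly stronger: it would let Prop.~\ref{prop: as conv eigenspace} hold without passing to a subsequence. The cost is a martingale tool and an anti-concentration bound.

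Two small precisions would make your argument airtight. First, apply L\'evy's extension with the natural filtration $\mc F_k=\sigma(v^0,x^0,\dots,x^{k-1})$ and $E_k\in\mc F_{k+1}$. Since $x^k$ is independent of $\mc F_k$ and $v^k$ is $\mc F_k$-measurable, $\bb P(E_k\mid\mc F_k)$ is a function of $v^k$ alone, so your conditioning on $V^k$ is legitimate. Second, the constant can be made explicit. By rotational invariance it suffices to bound $\bb P\bigl(\abs{x_1}\geq 1/(2\sqrt d)\bigr)$. Paley--Zygmund with $\bb E[x_1^2]=\tfrac1d$ and $\bb E[x_1^4]=\tfrac{3}{d(d+2)}$ then gives $p\geq\tfrac{3}{16}$, uniformly in $d$.
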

\begin{proof}
    For the $L^1$-convergence,
    recall that Thm.~\ref{thm: a_k conv} shows
    $\alpha_k = b_k d_k - a_k e_k \to 0$ for $k \to \infty$ (a.s.).
    Since 
    \begin{align*}
        \abs[\Big]{\Bigl\langle x^k, \Bigl(\tfrac{\scp{v^k}{A^{\tT}Av^k}}{\scp{v^k}{B^{\tT}Bv^k}} B^{\tT}B - A^{\tT}A\Bigr)v^k \Bigr\rangle}
        &= \frac{\abs{b_k d_k - a_k e_k}}{d_k} 
        = \frac{\abs{\alpha_k}}{d_k} \\
        &\leq \frac{\abs{\alpha_k}}{\lambda_d(B^{\tT}B)} 
        \leq 2\kappa(B^{\tT}B) \norm{A^{\tT}A},
    \end{align*}
    Lebesgue's dominated convergence theorem
    together with the law of total expectation yields the first assertion.
    For the second equality,
    rewrite the conditional expectation 
    w.r.t. the conditional law, which is independent of $v^k$ 
    and given by the uniform distribution on the unit sphere
    \begin{align*}
        &\bb E_{x^k \sim \mc U(\bb S^{d-1}) \mid V^k = v^k}[(\tfrac{\alpha_k}{d_k})^2]\\
        &= \bb E_{x^k \sim \mc U(\bb S^{d-1}) \mid V^k = v^k}
        \scalebox{0.78}{$\Bigl[\Bigl\langle \Bigl(\tfrac{\scp{v^k}{A^{\tT}Av^k}}{\scp{v^k}{B^{\tT}Bv^k}} B^{\tT}B - A^{\tT}A\Bigr)v^k, x^k \Bigr\rangle
        \Bigl\langle x^k, \Bigl(\tfrac{\scp{v^k}{A^{\tT}Av^k}}{\scp{v^k}{B^{\tT}Bv^k}} B^{\tT}B - A^{\tT}A\Bigr)v^k \Bigr\rangle\Bigr]$}
        \\
        &= \scalebox{0.87}{$(v^k)^{\tT}\Bigl(\tfrac{\scp{v^k}{A^{\tT}Av^k}}{\scp{v^k}{B^{\tT}Bv^k}} B^{\tT}B - A^{\tT}A\Bigr)$}
        \bb E_{x^k \sim \mc U(\bb S^{d-1})}[x^k(x^k)^{\tT}] 
        \scalebox{0.87}{$\Bigl(\tfrac{\scp{v^k}{A^{\tT}Av^k}}{\scp{v^k}{B^{\tT}Bv^k}} B^{\tT}B - A^{\tT}A\Bigr)v^k$} 
        \\
        &= \tfrac{1}{d} \norm[\Big]{\Bigl(\tfrac{\scp{v^k}{A^{\tT}Av^k}}{\scp{v^k}{B^{\tT}Bv^k}} B^{\tT}B - A^{\tT}A\Bigr)v^k}^2 \\
        &= \tfrac{1}{d} \norm[\big]{(A^{\tT}A - f(v^k) B^{\tT}B) v^k}^2
        = \tfrac{1}{d} \norm[\big]{H(v^k) v^k}^2,
    \end{align*}
    using \eqref{eq: x^k}~f. 
    Finally,
    we utilize \eqref{eq: riemannian gradient}~f., 
    i.e. $\tfrac{d_k}{2} \grad f(v^k) = H(v^k) v^k$ from \eqref{eq: riemannian hess}~f.
\end{proof}

\begin{figure}[h]
    \centering
    \includegraphics[width=\linewidth]{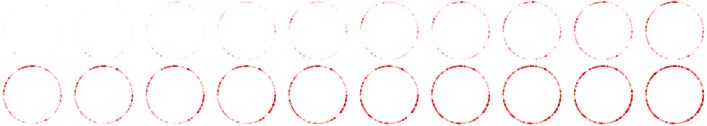}
    \caption{Visualization of the great circle $\bb S^1$ of the $\bb S^3$ 
    with the iterates $v^k \in \bb S^3 \hookrightarrow \bb S^1$,
    whenever $\abs{v_3^k}+\abs{v_4^k} < 10^{-3}$, generated by Alg.~\ref{alg: main}
    for $A = \diag(1,1,0,0)$ and $B = I_4$.
    Here visualizations are provided for the iterations $k \in \{1.000, 1.500,2.000,...,10.000\}$
    (starting: top left, towards: bottom right).}
    \label{fig: leading eigenspace iter}
\end{figure}

The existence of a subsequence, proven in the following proposition,
can be observed in Fig.~\ref{fig: leading eigenspace iter}.
As Alg.~\ref{alg: main} converges after finitely many iterations 
or $\gamma_k \neq 0$ for all $k \in \bb N$ (a.s.), see Prop.~\ref{prop: update} and Cor.~\ref{cor: monoton conv},
the following \textit{global} convergence result can be claimed.
Here, the distance of the iterates $v^k$ to the leading set of generalized eigenspaces
is measured by
\begin{equation*}
    \dist(\mc C, y) \coloneqq \inf_{x \in \mc C} \norm{y - x},
\end{equation*}
which is finite for closed convex sets in finite dimensions 
and attained at some $s \in \mc C$.
Differently \cite{B8,B9},
we are not aware of the convergence for the sequence of random vectors,
even if the dimension of the leading generalized eigenspace is one.
This is due to the unconstrained sampling technique, cf.~Fig.~\ref{fig: leading eigenspace iter}.
However, the convergence is \textit{global} in the sense
that the functional value converges to the global maximum
and the sequence of random variables converges 
to the leading generalized eigenspace.
This is, in general, absent in the literature for zeroth-order methods, 
cf.~\cite{li2023stochastic,balasubramanian2022zeroth} 
and the discussion in \cite[§~2.2.]{B9} for the related generalized Rayleigh quotient.

\begin{proposition}
    \label{prop: as conv eigenspace}
    Let $\dim(G_{\lambda_1}) < d-1$
    and $(v^k)_{k \in \bb N}$ be the sequence of random variables 
    generated by Alg.~\ref{alg: main}.
    Then there exists a subsequence \smash{$(v^{k_j})_{j \in \bb N}$}
    and a random variable $\lambda$ a.s. taking values in \smash{$\spec((B^{\tT}B)^{-1/2}A^{\tT}A(B^{\tT}B)^{-1/2})$}
    such that 
    \smash{$\dist(G_\lambda, v^{k_j}) \to 0$ for $j \to \infty$} a.s.
    Moreover,
    the sequence \smash{$(\tfrac{a_k}{d_k})_{k \in \bb N}$} from \eqref{eq: update}
    converges to $\lambda$ a.s.
\end{proposition}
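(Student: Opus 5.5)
We argue pathwise on the probability-one event on which Cor.~\ref{cor: monoton conv} and Thm.~\ref{thm: a_k conv} hold simultaneously. On this event we obtain (a) a limit $\lambda^\ast$ of $f(v^k)$, (b) vanishing of the Riemannian gradient along a subsequence, and (c) a compactness argument on $\bb S^{d-1}$ that identifies accumulation points as generalized eigenvectors with eigenvalue $\lambda^\ast$.

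\emph{Step 1 (limit of the values).}
By Cor.~\ref{cor: monoton conv} the sequence $f(v^k)=a_k/d_k$ is a.s. monotonically increasing. It is bounded above by $\lambda_1$ because of \eqref{eq: prop sphere gen}. Hence it converges a.s. to a random limit $\lambda^\ast\le\lambda_1$. At this point we do not yet know that $\lambda^\ast$ lies in the spectrum.

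\emph{Step 2 (a subsequence along which the gradient vanishes).}
By Lem.~\ref{lem: EE_a_k},
\[
\bb E\bigl[\tfrac{1}{d}\norm{H(v^k)v^k}^2\bigr]
=\bb E\bigl[(\alpha_k/d_k)^2\bigr]\to0 .
\]
Thus $\norm{H(v^k)v^k}\to0$ in $L^2$. Convergence in probability then yields a subsequence $(k_j)$ with $\norm{H(v^{k_j})v^{k_j}}\to0$ a.s.

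An alternative route avoids a random subsequence issue. Fix $v$ with $\norm{H(v)v}\ge\varepsilon$. Then $\bb P\bigl(|\langle x^k,H(v)v\rangle|\ge\varepsilon/(2\sqrt d)\bigr)\ge p(\varepsilon)>0$ uniformly in $v$, by rotation invariance of $\mc U(\bb S^{d-1})$. Since $x^k$ is independent of the past, the conditional Borel--Cantelli lemma (Lévy's extension) shows that $\norm{H(v^k)v^k}\ge\varepsilon$ infinitely often would force $|\alpha_k|/d_k\ge c\varepsilon$ infinitely often. This contradicts Thm.~\ref{thm: a_k conv}. So in fact $\norm{H(v^k)v^k}\to0$ a.s. along the whole sequence. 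I would present this version, since it is cleaner, and I expect it to be the main obstacle: one must verify the uniform lower bound $p(\varepsilon)$ and handle measurability correctly.

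\emph{Step 3 (identifying accumulation points).}
Take any further subsequence (still denoted $k_j$) with $v^{k_j}\to\bar v\in\bb S^{d-1}$; it exists by compactness. The map $v\mapsto H(v)v=(A^{\tT}A-f(v)B^{\tT}B)v$ is continuous on the sphere, since $B^{\tT}B\succ0$. Hence
\[
A^{\tT}A\bar v=f(\bar v)B^{\tT}B\bar v,
\qquad f(\bar v)=\lim_j f(v^{k_j})=\lambda^\ast .
\]
So $\bar v\in G_{\lambda^\ast}$, and $\lambda^\ast$ is a generalized eigenvalue. We set $\lambda:=\lambda^\ast$; it is measurable as the limit of measurable functions. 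Finally,
\[
\dist(G_\lambda,v^{k_j})\le\norm{v^{k_j}-\bar v}\to0 .
\]
If one wants the statement for every accumulation point, the same argument applies. By the same continuity one even gets $\dist(G_\lambda\cap\bb S^{d-1},v^k)\to0$ for the full sequence: otherwise there would be a subsequence bounded away from $G_\lambda$, and a convergent sub-subsequence of it would contradict the identification just proved. The "moreover" claim is exactly Step 1 combined with the identification $\lambda^\ast=\lambda$.
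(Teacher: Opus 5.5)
Your proof is correct. Your first route (the $L^2$ identity of Lem.~\ref{lem: EE_a_k}, then an a.s.\ convergent subsequence via convergence in probability, then compactness of $\bb S^{d-1}$) is the skeleton of the paper's proof, but you identify the limit differently.

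The paper introduces an auxiliary sequence of eigenvalues $\lambda_j$ with $\norm{(A^{\tT}A-\lambda_jB^{\tT}B)v^{k_j}}\to0$ and sets $\lambda=\lim_j\lambda_j$. At the end it passes from the residual $\norm{(A^{\tT}A-\lambda B^{\tT}B)v^{k_j}}\to0$ to $\dist(G_\lambda,v^{k_j})\to0$. Both steps tacitly need a spectral-gap estimate in the variables $w=(B^{\tT}B)^{1/2}v$. You instead take $\lambda^\ast=\lim_k f(v^k)$ from monotonicity and identify it by continuity of $f$ and of $v\mapsto H(v)v$ at an accumulation point. You then get the distance statement by a sub-subsequence argument, so no perturbation bound is needed.

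The route you say you would present is genuinely different. You combine L\'evy's conditional Borel--Cantelli lemma for the filtration $\mathcal F_k=\sigma(v^0,x^0,\dots,x^{k-1})$, the identity $\alpha_k/d_k=\langle x^k,H(v^k)v^k\rangle$, and the pathwise Thm.~\ref{thm: a_k conv}. This bypasses Lem.~\ref{lem: EE_a_k} entirely and gives $\norm{\grad f(v^k)}\to0$ along the whole sequence, hence $\dist(G_{\lambda}\cap\bb S^{d-1},v^k)\to0$ for all $k$. That is strictly stronger than the proposition. Once $\lambda=\lambda_1$ is known, it already contains the second assertion of Thm.~\ref{thm: convergence as}. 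The price is the martingale form of Borel--Cantelli and a measurable choice of $\tau_k$, both of which the paper's $L^2$ argument avoids.

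The uniform bound you flag as the main obstacle is harmless. By rotation invariance you may take $p=\bb P(\abs{x_1}\geq 1/(2\sqrt d))$. Paley--Zygmund with $\bb E[x_1^2]=1/d$ and $\bb E[x_1^4]=3/(d(d+2))$ (from the proof of Prop.~\ref{prop: rie hess}) even gives $p\geq 3/16$, independently of $d$.
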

\begin{proof}
    The proof is similar to \cite[Prop.~5.4]{B9};
    see Appendix~\ref{sec: app}.
\end{proof}

\begin{theorem}
    \label{thm: convergence as}
    Let $(v^k)_{k \in \bb N}$ be the sequence of random variables generated by Alg.~\ref{alg: main}.
    Then 
    \begin{equation*}
        \tfrac{\norm{A v^k}^2}{\norm{B v^k}^2} 
        \nearrow \lambda_1,
        \quad \text{and} \quad
        \dist(G_{\lambda_1}, v^k) \to 0,
        \quad \text{as} \quad 
        k \to \infty,
        \quad (a.s.).
    \end{equation*}
\end{theorem}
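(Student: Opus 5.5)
The proof splits along the trichotomy of Prop.~\ref{prop: update}, so that finite termination is handled first and the genuinely asymptotic case afterwards.

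\emph{Degenerate cases.} If the spectrum is a singleton, then $v^0\in G_{\lambda_1}$ a.s. Here every iterate stays a leading generalized eigenvector, since every point is one, so $f(v^k)=\lambda_1$ and the distance vanishes. If $\dim(G_{\lambda_1})=d-1$, Prop.~\ref{prop: update}~ii) gives $v^1\in G_{\lambda_1}$ a.s. The exact line search never decreases $f$, because $\tau=0$ is admissible in \eqref{eq: sub prob}. Hence $f(v^k)=\lambda_1$ for all $k\ge1$, which forces $v^k\in G_{\lambda_1}\cap\bb S^{d-1}$ and gives the claim trivially. In both cases ``$\nearrow$'' is read as non-decreasing convergence.

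\emph{Main case $\dim(G_{\lambda_1})<d-1$.} Cor.~\ref{cor: monoton conv} gives strict monotone increase of $f(v^k)=a_k/d_k$. Prop.~\ref{prop: as conv eigenspace} gives a random limit $\lambda\in\spec$ with $f(v^k)\to\lambda$ a.s. It also gives a subsequence with $\dist(G_\lambda,v^{k_j})\to0$. It remains to show $\lambda=\lambda_1$ a.s., and then to upgrade subsequence convergence to convergence of the whole sequence.

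\emph{Identifying $\lambda$ (the main obstacle).} Suppose $\lambda<\lambda_1$ on an event of positive probability. I would use the fact that a non-leading eigenspace is a saddle region: near $G_\lambda\cap\bb S^{d-1}$ with $\lambda<\lambda_1$, the matrix $H(v)=A^{\tT}A-f(v)B^{\tT}B$ has a positive direction. Concretely, $\scp{u}{H(v)u}\ge c>0$ for $u$ in $G_{\lambda_1}$, uniformly in $v$ near that set. Since $x^k$ is uniform on the whole sphere and independent of $v^k$, with probability at least some $p>0$ the direction $x^k$ lies in a fixed cone around $G_{\lambda_1}$. For such $x^k$, the two-dimensional problem of Thm.~\ref{thm: svd} has largest eigenvalue at least $f(v^k)+\delta$ for some fixed $\delta>0$. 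This holds because $M_k-f(v^k)N_k$ has a positive entry $\scp{x^k}{H(v^k)x^k}$ while the off-diagonal $\alpha_k/d_k\to0$ by Thm.~\ref{thm: a_k conv}. Each iteration is therefore a fresh Bernoulli trial with success probability $\ge p$, conditionally on the past. By the conditional Borel--Cantelli lemma (Lévy's extension), infinitely many jumps of size $\ge\delta$ occur a.s. on that event. This contradicts boundedness of $f(v^k)\le\lambda_1$, so $\lambda=\lambda_1$ a.s. The delicate part is making $\delta$ and $p$ uniform in $k$. Because $v^k$ need not stay near $G_\lambda$ along the full sequence, I would use only that $f(v^k)\le\lambda<\lambda_1-\eta$ for all $k$ and apply the positivity of $H$ on a cone around $G_{\lambda_1}$. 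This suffices: $\scp{u}{(A^{\tT}A-tB^{\tT}B)u}\ge(\lambda_1-t)\lambda_d(B^{\tT}B)$ for $u\in G_{\lambda_1}\cap\bb S^{d-1}$, so eigenspace proximity is not even needed.

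\emph{Full-sequence distance.} With $f(v^k)\to\lambda_1$, compactness of $\bb S^{d-1}$ and continuity of $f$ finish the argument. Any accumulation point $\bar v$ satisfies $f(\bar v)=\lambda_1$, hence maximizes $f$ on the sphere and lies in $G_{\lambda_1}$. If $\dist(G_{\lambda_1},v^k)\ge\varepsilon$ along some subsequence, extracting a convergent sub-subsequence would contradict this. Hence $\dist(G_{\lambda_1},v^k)\to0$ a.s.
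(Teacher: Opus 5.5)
Your proof is correct. It rests on the same core mechanism as the paper: a probability, uniform in $v^k$, that the freshly sampled direction lands near the leading eigenspace, combined with monotonicity of $f(v^k)$. The bookkeeping, however, is different.

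\textbf{The paper's route.} It fixes the superlevel set $\mc L=\{f>\tilde\lambda\}$ above the second-largest generalized eigenvalue. It lower-bounds the one-step probability of entering $\mc L$ via Lem.~\ref{lem: uniform lower bound} and uses that $\mc L$ is absorbing to get $\bb P(E_k)<(1-p_{\varepsilon,d})^k\,\bb P(E_0)$. It then identifies the limit as $\lambda_1$ through Prop.~\ref{prop: as conv eigenspace}, since the limit is an eigenvalue exceeding $\tilde\lambda$. Finally, it reads off the distance claim from the expansion of $f(v^k)$ in an eigenbasis of $(B^{\tT}B)^{-1/2}A^{\tT}A(B^{\tT}B)^{-1/2}$.

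\textbf{Your route.} You argue by contradiction on $\{\lambda<\lambda_1\}$: a fixed cone around $G_{\lambda_1}\cap\bb S^{d-1}$, hit infinitely often by the i.i.d.\ directions, forces infinitely many increments of fixed size. You then get the distance claim from compactness and the characterization of the maximizers of $f$ on $\bb S^{d-1}$.

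\textbf{Simplifications available to you.} Your increment bound is best stated as $f(v^{k+1})\geq f(x^k)=f(v^k)+\scp{x^k}{H(v^k)x^k}/\norm{Bx^k}^2$, using the projective point $\tau=\infty$. This holds whatever the off-diagonal entry is, so your appeal to Thm.~\ref{thm: a_k conv} is superfluous. Since the events $\{x^k\in C\}$ are i.i.d.\ with fixed positive probability, the ordinary second Borel--Cantelli lemma suffices in place of L\'evy's extension. Taking $\eta=1/n$ and a countable union would also make Prop.~\ref{prop: as conv eigenspace} unnecessary.

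\textbf{Trade-off.} Your route is more self-contained. It avoids the spherical-cap estimate of Lem.~\ref{lem: uniform lower bound} and the whole $\alpha_k$ machinery, and it gives the distance statement directly in the original coordinates. It also treats the degenerate cases of Prop.~\ref{prop: update} explicitly, which the paper leaves implicit. The paper's route gives in exchange an explicit geometric decay for the probability of not yet having passed $\tilde\lambda$.
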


For proving the main convergence result Thm.~\ref{thm: convergence as}, 
the following constructive lemma is required,
which is visualized in Fig.~\ref{fig: sphere_proj}.
It is similar to \cite[Lem.~5.5]{B9} and \cite[Lem.~2.17]{B8}
using the geodesic distances.
It shows 
that the probability to sample into an $\varepsilon$-neighborhood 
$\bb B_\varepsilon(\tilde v) \coloneq \{x \in \bb R^d \mid \norm{\tilde v - x} < \varepsilon\}$
of an optimal generalized eigenvector is uniformly lower bounded away from zero
w.r.t. the uniform distribution on the unit sphere.

\begin{figure}
    \centering
    \includegraphics[width=0.55\linewidth,clip=true, trim=5pt 5pt 5pt 30pt]{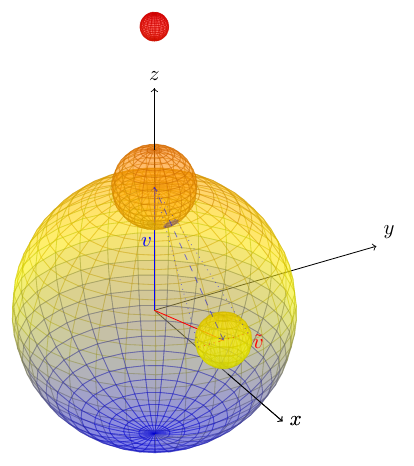}
    \caption{Visualization of the claim in Lem.~\ref{lem: uniform lower bound} 
    for a uniform lower bound on the probability of reaching $\bb B_\varepsilon(\tilde v)$ 
    for any $v \in \bb S^{d-1}$ is provided.
    This is the situation in the proof of Thm.~\ref{thm: convergence as} 
    as it is used in \eqref{eq: uniform lower bound}}.
    \label{fig: sphere_proj}
\end{figure}

\begin{lemma}
    \label{lem: uniform lower bound}
    Let $v, \tilde v \in \bb S^{d-1}$ and $\varepsilon > 0$. 
    Define 
    \begin{align*}
        \mc D_v \coloneqq \left\{ x \in \bb S^{d-1} 
        \;\middle|\; 
        \exists \tau \in \bb R, 
        R_v(\tau x) 
        = \tfrac{v + \tau x}{\norm{v + \tau x}} \in \bb B_\varepsilon(\tilde v) \cup \bb B_\varepsilon(-\tilde v)\right\}.
    \end{align*}
    There exists some $p_{\varepsilon,d} > 0$ 
    such that $\inf_{v \in \bb S^{d-1}} \sigma_{d-1}(\mc D_v) > p_{\varepsilon,d}$.
\end{lemma}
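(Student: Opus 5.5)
The idea is to identify $\mc D_v$ geometrically. For $x \notin \spa\{v\}$, the points $R_v(\tau x)$ with $\tau \in \bb R$, together with $\pm x$ at $\tau=\pm\infty$, trace the whole great circle $\spa\{v,x\}\cap\bb S^{d-1}$ (up to the antipodal identification, which is why both $\bb B_\varepsilon(\tilde v)$ and $\bb B_\varepsilon(-\tilde v)$ appear). The great circle through $v$ meets $\bb B_\varepsilon(\pm\tilde v)$ iff the plane $\spa\{v,x\}$ passes within angular distance of order $\varepsilon$ of $\pm\tilde v$.

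The plan is to exhibit a fixed-size set of directions, uniform in $v$, which is contained in $\mc D_v$. Take $\delta=\min\{\varepsilon,1\}/2$ and consider the cap $\mc C_v \coloneqq \{x\in\bb S^{d-1} \mid \norm{x-w}<\delta \text{ or } \norm{x+w}<\delta\}$, where $w$ is chosen from $\tilde v$ and $v$.

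\textbf{Case 1: $\tilde v\notin\spa\{v\}$.} Set $w \coloneqq P_v\tilde v/\norm{P_v\tilde v}$. Then $\tilde v = \scp{\tilde v}{v}v + \norm{P_v\tilde v}\,w$ lies on the circle through $v$ and $w$. Let $x\in\bb S^{d-1}$ with $\norm{x-w}<\delta$. The map $x\mapsto \spa\{v,x\}\cap\bb S^{d-1}$ is Lipschitz. Concretely, choose $\tau^* = \norm{P_v\tilde v}/\scp{\tilde v}{v}$ (or $\tau=\infty$, i.e. the point $x$ itself, if $\scp{\tilde v}{v}=0$). Normalizing $\scp{\tilde v}{v}v+\norm{P_v\tilde v}x$ gives a unit vector within distance $\le 2\norm{P_v\tilde v}\,\norm{x-w}\le 2\delta\le\varepsilon$ of $\tilde v$, possibly up to sign. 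Here I use that $\norm{\scp{\tilde v}{v}v+\norm{P_v\tilde v}x}\ge 1-\delta\geq\tfrac12$ and the elementary estimate $\norm{a/\norm a - b}\le 2\norm{a-b}$ for unit $b$. This shows $\mc C_v\subset\mc D_v$, taking the $\tau=\infty$ case as a limit via a large finite $\tau$ and openness of the balls.

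\textbf{Case 2: $\tilde v=\pm v$.} Take $\tau=0$, so every $x$ works and $\mc D_v=\bb S^{d-1}$.

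The bound is then rotation invariance. $\sigma_{d-1}$ of a spherical cap of chordal radius $\delta$ does not depend on its center, so $\sigma_{d-1}(\mc D_v)\ge \sigma_{d-1}(\bb B_\delta(w)\cap\bb S^{d-1}) \eqqcolon 2p_{\varepsilon,d}>0$. Positivity holds because an open cap is a nonempty open subset of the sphere, and the explicit value is given by the normalized incomplete beta function. The bound is independent of $v$, and strict inequality follows by taking $p_{\varepsilon,d}$ as half the cap measure.

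The main obstacle is the perturbation estimate in Case 1. One must control, uniformly in $v$, how far the normalized vector $R_v(\tau^*x)$ moves when $w$ is replaced by $x$, including near the degenerate regime where $\scp{\tilde v}{v}\approx0$ and $\tau^*$ blows up. To handle this I would work projectively with $\xi=(\scp{\tilde v}{v},\norm{P_v\tilde v})$, a unit vector, writing the candidate as $[v,x]\xi$ instead of $v+\tau x$. This avoids unbounded $\tau$, matches the projective view of Thm.~\ref{thm: svd}, and makes the Lipschitz bound $\norm{[v,x]\xi-[v,w]\xi}\le\norm{x-w}$ immediate.
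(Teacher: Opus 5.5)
Your proof is correct, and it shares the paper's skeleton: use $\norm{a/\norm{a}-b}\le 2\norm{a-b}$ for unit $b$ to place a spherical cap of $v$-independent size inside $\mc D_v$. The cap you choose, and where its uniformity comes from, are different.

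The paper reduces to $\tau x\in\bb B_{\varepsilon/2}(\tilde v-v)$ up to sign. Its cap is therefore the trace on the sphere of the double cone over that ball, centred at the secant direction $(\tilde v-v)/\norm{\tilde v-v}$ with $\sin\alpha_\varepsilon=\tfrac{\varepsilon/2}{\norm{v-\tilde v}}$. To make this uniform, the paper needs three things: the normalization $\scp{v}{\tilde v}\ge0$ (so that $\norm{v-\tilde v}\le\sqrt2$), a separate case $\norm{v-\tilde v}<\varepsilon$, and an incomplete-Beta bound on the cap area.

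You centre the cap at the tangent direction $w=P_v\tilde v/\norm{P_v\tilde v}$. With $(c,s)=(\scp{\tilde v}{v},\norm{P_v\tilde v})$, the identity $(cv+sx)-\tilde v=s(x-w)$ and $s\le1$ give a fixed radius $\delta$, independent of $\scp{v}{\tilde v}$ and with no sign normalization. The price is the boundary case $c=0$ ($\tau=\infty$), which you handle correctly via openness of the balls; the paper's cone always yields a finite $\tau$.

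What each route buys:
\begin{itemize}
\item The paper's cap grows as $v\to\pm\tilde v$, and it aims at an explicit $p_{\varepsilon,d}$. However, its displayed Beta value should be taken at $\sin^2\beta_\varepsilon$ and normalized by $\mathrm{B}(\tfrac{d-1}{2},\tfrac12)$, so only positivity is really established there.
\item Your route gets positivity from rotation invariance alone, which is all that Thm.~\ref{thm: convergence as} uses.
\item Your halving of the cap measure is what actually gives the strict inequality in the statement. The paper's chain of $\ge$ yields only $\inf_v\sigma_{d-1}(\mc D_v)\ge p_{\varepsilon,d}$.
\end{itemize}

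One minor wording point: $x\mapsto\spa\{v,x\}\cap\bb S^{d-1}$ is not globally Lipschitz, since it degenerates as $x\to\pm v$. You never actually use that claim. The explicit identity does the work, and it already resolves the ``main obstacle'' you flag at the end.
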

\begin{proof}
    See Appendix~\ref{sec: app}.
\end{proof}

Finally, for proving Thm.~\ref{thm: convergence as},
we consider the set 
\begin{align*}
    \label{eq: set L}
    \mc L 
    \coloneqq 
    \left\{v \in \bb S^{d-1} 
    \;\middle|\; 
    \tfrac{\norm{A v^k}^2}{\norm{B v^k}^2} 
    = \tfrac{\scp{v}{A^{\tT}Av}}{\scp{v}{B^{\tT}Bv}} 
    = f(v) 
    > \tilde \lambda \right\},
    \quad \tilde \lambda \coloneqq \max_{\lambda_i < \lambda_1 } \lambda_i.
\end{align*}
and utilize Lem.~\ref{lem: uniform lower bound}
and refer to \cite[Thm.~2.17]{B8}
in light of a similar proof technique. 
On the other hand, one might prove it quite similar by utilizing 
the Borel--Cantelli lemma \cite[VII.~§~5]{renyi1966wahrscheinlichkeitsrechnung}.

\begin{proof}[Proof of Thm.~\ref{thm: convergence as}]
    Take the set $\mc L$ (above) into account 
    and define the events $E_k \coloneqq \{V^k \not\in \mc L\}$.
    Then,
    by the monotonicity of the generated functional values, cf.~Cor.~\ref{cor: monoton conv},
    we have $E_{k+1} \subseteq E_k$.
    Furthermore,
    we have 
    \begin{equation}
        \label{eq: E_k+1}
        \bb P(E_{k+1})
        = \bb P_{V^{k+1}}(v^{k+1} \not\in \mc L)
        = \int_{\bb S^{d-1} \setminus \mc L} \hspace{-10pt}
        \bb P_{V^{k+1} \mid V^k = v^k} (v^{k+1} \not\in \mc L)
        \; \mathrm d \bb P_{V^k}(v^k),
    \end{equation}
    since we have 
    \begin{equation*}
        \bb P_{V^{k+1} \mid V^k = v^k} (v^{k+1} \not\in \mc L)
        = 0,
        \qquad \text{if} \qquad 
        v^k \in \mc L.
    \end{equation*}
    Furthermore,
    we have by the continuity of $f$, cf.~\ref{eq: obj}, from Lem.~\ref{lem: lipschitz func}
    some $\varepsilon > 0$ and $\tilde v \in \bb S^{d-1}$ such that 
    \begin{equation*}
        \bb B_{\varepsilon,\pm \tilde v} 
        \coloneqq (\bb B_{\varepsilon}(\tilde v) \cup \bb B_{\varepsilon}(-\tilde v)) 
        \cap \bb S^{d-1} 
        \subset \mc L
    \end{equation*}
    and hence 
    \begin{align*}
        \bb P_{v^{k+1} \sim V^{k+1} \mid V^k = v^k} (v^{k+1} \not \in \mc L)
        &= \bb P_{x^{k} \sim \mc U(\bb S^{d-1}) \mid V^k = v^k} (R_{v^k}(\tau_k x^k) \not \in \mc L) \\
        &\leq \bb P_{x^k \sim \mc U(\bb S^{d-1})} (x^k \not \in \mc D_{v^k}) \\
        &\leq \bb P_{x^k \sim \mc U(\bb S^{d-1})} (x^k \not \in \bb B_{\varepsilon,\pm \tilde v})
        < 1 - p_{\varepsilon,d}
    \end{align*}
    by utilizing Lem.~\ref{lem: uniform lower bound}
    for some $p_{\varepsilon,d} \in (0,1)$.
    Therewith,
    by \eqref{eq: E_k+1} we obtain 
    \begin{equation*}
        \bb P(E_{k+1})
        < (1 - p_{\varepsilon, d}) \bb P_{V^k}(v^k \not \in \mc L)
        = (1 - p_{\varepsilon, d}) \bb P(E_{k}),
        \qquad k \in \bb N.
    \end{equation*}
    Hence, we iteratively have 
    $\bb P(E_k) < (1 - p_{\varepsilon, d})^k \bb P(E_0)$, 
    where $\bb P(E_0) \leq 1$.
    Now, Borel--Cantelli lemma \cite[VII.~§~5]{renyi1966wahrscheinlichkeitsrechnung}
    yields the assertion,
    due to 
    \begin{equation*}
        \sum_{k = 0}^\infty \bb P (E_k)
        < \bb P(E_0)\sum_{k = 1}^\infty (1 - p_{\varepsilon,d})^k
        = \bb P(E_0) \frac{1}{1 - (1 - p_{\varepsilon,d})}
        = \frac{\bb P(E_0)}{p_{\varepsilon, d}}
        < \infty,
    \end{equation*}
    such that 
    $f(v^k) = \nicefrac{\scp{v^k}{A^{\tT}A v^k}}{\scp{v^k}{B^{\tT}B v^k}} \nearrow \lambda_1$ as $k \to \infty$ a.s.,
    by Prop.~\ref{prop: as conv eigenspace}.

    For the second assertion,
    let $\{u_1,...,u_d\} \subset \bb S^{d-1}$ be an orthonormal basis 
    of eigenvectors of $(B^{\tT}B)^{-1/2} A^{\tT}A (B^{\tT}B)^{-1/2}$.
    We define, due to \eqref{eq: eigen} ff., $w^k \coloneqq (B^{\tT}B)^{1/2} v^k$ such that 
    \begin{align*}
        f(v^k)
        &= \frac{\scp{v^k}{A^{\tT}A v^k}}{\scp{v^k}{B^{\tT}B v^k}}
        = \frac{\scp{w^k}{(B^{\tT}B)^{-1/2}A^{\tT}A (B^{\tT}B)^{-1/2} w^k}}{\norm{w^k}^2} \\
        &= \frac{\sum_{i,j = 1}^d \scp{w^k}{u_i} \scp{w^k}{u_j} \lambda_j \scp{u_i}{u_j}}{\norm{w^k}^2}
        = \frac{\sum_{i = 1}^d \scp{w^k}{u_i}^2 \lambda_i}{\norm{w^k}^2}
        \leq \lambda_1. 
    \end{align*}
    As we know the convergence a.s. for $f(v^k) \nearrow \lambda_1$ as $k \to \infty$,
    we conclude that $|\langle w^k, u_i\rangle| \to 0$ as $k \to \infty$
    if $u_i$ is not an eigenvector corresponding to $\lambda_1$.
\end{proof}

\begin{remark}[Absence of a Convergence Rate]
    \label{rem: absent of rate}
    In contrast to \cite{B8,B9,B12} 
    where it is possible to prove convergence rates 
    of the sequence to the set of global maximizers,
    and to \cite{li2023stochastic} 
    for critical points,
    there is a possibility to reach the other side of the sphere 
    as well as small neighborhoods 
    of the leading generalized eigenspace
    due to Lem.~\ref{lem: uniform lower bound}.
    This heuristically
    explains the absence of a provable global convergence rate.
\end{remark}

\subsection{Estimating the Riemannian Gradient and Hessian}
\label{sec: estimators}

Similar to \cite{B9},
Alg.~\ref{alg: main} can be varied using $q \in \bb N$ iid. sampled search directions
to estimate the Riemannian gradient.
By Thm.~\ref{thm: riemannian style}, 
the scalar $\alpha_k=b_kd_k-a_ke_k$ provides a moment estimator
of the Riemannian gradient
without finite differences.
Moreover,
the identity below shows that a properly rescaled average is an unbiased version of the Riemannian gradient.
Furthermore,
as Thm.~\ref{thm: svd} provides the optimal step size 
for the approximation of the sampled step direction,
the proposed method is the optimal possible construction 
within the sampled two-dimensional projective subspace.
Recall that due to the new sampling procedure, 
the distribution is not uniform; 
however, it remains absolutely continuous
such that the bad events ($\gamma_k = 0$, i.e. $v^k$ is a generalized eigenvector) 
remain as measure zero sets.
Hence the almost sure convergence of Alg. ~\ref{alg:cmgRq} can be obtained
from the theory, which is here provided for a single sample direction.

\begin{algorithm}[htb]
    \caption{Full-Sphere} Zeroth-Order, 
    $q$-Sample Calculation of the Generalized Operator Norm
    \label{alg:cmgRq}
    \begin{algorithmic}[1]
    \State{\textbf{Given} $A\in\bb R^{m\times d}$, $B\in\bb R^{\ell\times d}$ with $\rank(B)=d$, and $q\in\bb N$.}
    \State{\textbf{Initialize} $v^0\in\bb S^{d-1}$.}
    \For{$k=0,1,2,\ldots$}
    \State{\textbf{Sample} $x_i^k=y_i/\norm{y_i}$ with $y_i\overset{\mathrm{iid.}}{\sim}\mc N(0,I_d)$, $i=1,\ldots,q$.}
    \State{\textbf{Form} $\alpha_{k,i}=b_{k,i}d_k-a_ke_{k,i}$ and
    \begin{equation*}
        \widehat g_q^k
        =P_{v^k}\left(\frac{2d}{d_k^2q}\sum_{i=1}^{q}\alpha_{k,i}x_i^k\right).
    \end{equation*}}
    \State{\textbf{Stop} if $\widehat g_q^k=0$; otherwise set $x^k=\widehat g_q^k/\norm{\widehat g_q^k}$.}
    \State{\textbf{Calculate} the maximizing projective step from the leading generalized eigenvector of $(M_k,N_k)$ as in Thm.~\ref{thm: svd}.}
    \State{\textbf{Update} $v^{k+1}=R_{v^k}(\tau_kx^k)$.}
    \EndFor
    \State{\textbf{Return} $\norm{A/B}\approx\norm{Av^k}/\norm{Bv^k} = \sqrt{a_k/d_k}$.}
     \end{algorithmic}
\end{algorithm}

\begin{remark}[Finite Numbers of Samples vs. Tangent Space]
    For finitely many sample, the average need not
    lie exactly in the tangent space, so Alg.~\ref{alg:cmgRq} projects it with $P_{v^k}$
    before normalizing.
\end{remark}

\begin{theorem}
    \label{thm: riemannian style}
    It holds
   \begin{equation*}
        \bb E_{x^k \sim \mc U(\bb S^{d-1}) \mid V^k = v^k}[\alpha_k x^k]
        = \tfrac{d_k^2}{2d}\nabla f(v^k)
        = \tfrac{d_k^2}{2d}\grad f(v^k).
    \end{equation*}
    Equivalently, $\frac{2d}{d_k^2}\alpha_kx^k$ is an unbiased
    estimator of $\grad f(v^k)$.
\end{theorem}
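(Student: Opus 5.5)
The identity is linear in $x^k$ inside the expectation, so the plan is to rewrite $\alpha_k$ as an inner product of $x^k$ with a fixed vector, then apply the second-moment identity of the uniform sphere distribution. Conditioning on $V^k=v^k$ only fixes $v^k$. The law of $x^k$ stays $\mc U(\bb S^{d-1})$, independent of $v^k$, by Rem.~\ref{rem: dist x^k v^k}. Hence $a_k$ and $d_k$ are deterministic constants inside the conditional expectation.

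First, using \eqref{eq: para mgRq}, write
\begin{equation*}
    \alpha_k = b_kd_k - a_ke_k
    = \scp{x^k}{d_k A^{\tT}Av^k - a_k B^{\tT}Bv^k}
    = \scp{x^k}{g_k},
\end{equation*}
where $g_k \coloneqq \norm{Bv^k}^2 A^{\tT}Av^k - \norm{Av^k}^2 B^{\tT}Bv^k$. This is the same manipulation already carried out for $\alpha(x)$ in the proof of Lem.~\ref{lem: gamma_k}.

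Comparing with \eqref{eq: euclidean grad} gives $g_k = \tfrac{\norm{Bv^k}^4}{2}\nabla f(v^k) = \tfrac{d_k^2}{2}\nabla f(v^k)$.

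Second, compute the conditional expectation:
\begin{equation*}
    \bb E[\alpha_k x^k \mid V^k=v^k]
    = \bb E\bigl[x^k (x^k)^{\tT}\bigr] g_k
    = \tfrac{1}{d} g_k
    = \tfrac{d_k^2}{2d}\nabla f(v^k).
\end{equation*}
Here we use $\bb E_{x^k\sim\mc U(\bb S^{d-1})}[x^k(x^k)^{\tT}]=\tfrac1d I_d$, stated after \eqref{eq: x^k}. This is exactly the computation already done in the proof of Lem.~\ref{lem: EE_a_k}, with one factor of $x^k$ removed. Integrability is immediate, since $|\alpha_k|$ is bounded uniformly, as shown there.

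Third, identify the Euclidean gradient with the Riemannian one: $\nabla f(v^k)=\grad f(v^k)$. This holds because $\scp{v^k}{g_k}=d_ka_k-a_kd_k=0$, so $\nabla f(v^k)\in\spa\{v^k\}^\perp$ and $P_{v^k}$ acts as the identity on it, as noted after \eqref{eq: riemannian gradient}. Multiplying by the deterministic factor $\tfrac{2d}{d_k^2}$, which is well defined since $d_k\ge\lambda_d(B^{\tT}B)>0$, gives the unbiasedness statement.

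There is no real obstacle here. The only point requiring care is justifying that $d_k$, $a_k$ and $g_k$ can be pulled out of the conditional expectation, i.e.\ that $x^k$ is independent of $v^k$ under the full-sphere sampling. Without that, the conditional law would depend on $v^k$, as in \eqref{eq: tangent surrogate}, and the factor $\tfrac1d$ would change.
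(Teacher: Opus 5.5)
Your proposal is correct and follows essentially the same route as the paper. You write $\alpha_k x^k = x^k (x^k)^{\tT} g_k$ with $g_k = d_k A^{\tT}Av^k - a_k B^{\tT}Bv^k$, pull out $\bb E[x^k(x^k)^{\tT}] = \tfrac{1}{d} I_d$ using the independence of the full-sphere sample from $v^k$, and identify $g_k = \tfrac{d_k^2}{2}\nabla f(v^k) = \tfrac{d_k^2}{2}\grad f(v^k)$; the paper only phrases this last step as $g_k = d_k H(v^k)v^k$ with $H(v^k)v^k = \tfrac{d_k}{2}\grad f(v^k)$, which rests on the same tangency fact $\scp{v^k}{g_k} = 0$ that you check explicitly.
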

\begin{proof}
    For the first statement,
    simply rewrite the conditional expectation
    \begin{align*}
        &\bb E_{x^k \sim \mc U(\bb S^{d-1}) \mid V^k = v^k}[\alpha_k x^k] \\
        &= \bb E_{x^k \sim \mc U(\bb S^{d-1})\mid V^k = v^k}[(b_k d_k - a_k e_k) x^k] \\
        &= \bb E_{x^k \sim \mc U(\bb S^{d-1})\mid V^k = v^k}[x^k (x^k)^{\tT} \bigl(\langle B^{\tT}B v^k, v^k\rangle A^{\tT}A v^k - \langle A^{\tT}A v^k, v^k\rangle B^{\tT}B v^k\bigr)] \\
        &= \bb E_{x^k \sim \mc U(\bb S^{d-1})}[ x^k (x^k)^{\tT}] \bigl(\langle B^{\tT}B v^k, v^k\rangle A^{\tT}A v^k - \langle A^{\tT}A v^k, v^k\rangle B^{\tT}B v^k\bigr) \\
        &= \tfrac{1}{d} \bigl(\langle B^{\tT}B v^k, v^k\rangle A^{\tT}A v^k - \langle A^{\tT}A v^k, v^k\rangle B^{\tT}B v^k\bigr) \\
        &= \tfrac{d_k}{d} \bigl(A^{\tT}A v^k - \tfrac{a_k}{d_k} B^{\tT}B v^k\bigr)
        = \tfrac{d_k}{d} H(v^k) v^k,
    \end{align*}
    using \eqref{eq: x^k}~f.
    and \eqref{eq: para mgRq}~f., 
    as well as \eqref{eq: riemannian gradient} and \eqref{eq: update},
    i.e. $\frac{d_k}{2} \grad f(v^k) = H(v^k) v^k$.
\end{proof}

Next,
we derive an unbiased estimator of
the Riemannian Hessian \eqref{eq: riemannian hess special}. 
The idea is to obtain a Newton-style or quasi-Newton zeroth-order method \cite{gill1972quasinewton,Antoniou2007}.
Therefore, we first study the expectation 
of second- and fourth-order samples
for an arbitrary, quadratic matrix in Prop.~\ref{prop: rie hess}.
Secondly, we can turn this into an unbiased estimator of the Hessian
by multiplication with the explicit factor from Cor.~\ref{cor: riemannian hessian estimator}.

\begin{proposition}
    \label{prop: rie hess}
    Let $C \in \bb R^{d \times d}$
    and $\symC \coloneqq \tfrac{1}{2}(C + C^{\tT})$ denotes the Hermitian part of $C$.
    It holds
    \begin{equation*}
        \bb E_{x \sim \mc U(\bb S^{d-1})}[\scp{x}{Cx}] 
        = \tfrac{\trace(\symC)}{d} 
        \quad\text{and}\quad
        \bb E_{x \sim \mc U(\bb S^{d-1})}[\scp{x}{Cx} xx^{\tT}] 
        = \tfrac{2\symC + \trace(C)I_d}{d(d+2)}.
    \end{equation*}
\end{proposition}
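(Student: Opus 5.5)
The plan is to work entrywise with the moments of $x\sim\mc U(\bb S^{d-1})$, and to reduce first to the symmetric case. Since $\scp{x}{Cx}=\scp{x}{\symC x}$ for every $x$ and $\trace(C)=\trace(\symC)$, both identities may be proved with $C$ replaced by its Hermitian part $\symC$. This replacement leaves the right-hand sides unchanged.

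The first identity follows directly from the second-moment property stated after \eqref{eq: x^k}:
\begin{equation*}
\bb E[\scp{x}{\symC x}]
=\bb E[\trace(\symC xx^{\tT})]
=\trace\bigl(\symC\,\bb E[xx^{\tT}]\bigr)
=\tfrac{1}{d}\trace(\symC).
\end{equation*}
Here I use linearity of the trace together with $\bb E[xx^{\tT}]=\tfrac{1}{d}I_d$.

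For the second identity I need the fourth moments of the uniform distribution on the sphere. I will use the Gaussian representation $x=g/\norm{g}$ with $g\sim\mc N(0,I_d)$, as in \eqref{eq: x^k}. The key fact is that the radius $\norm{g}$ and the direction $g/\norm{g}$ are independent. Hence
\begin{equation*}
\bb E[g_ig_jg_kg_l]=\bb E\bigl[\norm{g}^4\bigr]\,\bb E[x_ix_jx_kx_l].
\end{equation*}
I evaluate the two known quantities separately:
\begin{itemize}
\item By Isserlis' (Wick's) theorem, $\bb E[g_ig_jg_kg_l]=\delta_{ij}\delta_{kl}+\delta_{ik}\delta_{jl}+\delta_{il}\delta_{jk}$.
\item Since $\norm{g}^2\sim\chi^2_d$, we have $\bb E\norm{g}^4=d^2+2d=d(d+2)$.
\end{itemize}
Together these give
\begin{equation*}
\bb E[x_ix_jx_kx_l]=\frac{\delta_{ij}\delta_{kl}+\delta_{ik}\delta_{jl}+\delta_{il}\delta_{jk}}{d(d+2)}.
\end{equation*}

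Next I compute the $(k,l)$ entry of the target matrix:
\begin{equation*}
\bigl(\bb E[\scp{x}{Cx}xx^{\tT}]\bigr)_{kl}
=\sum_{i,j}C_{ij}\,\bb E[x_ix_jx_kx_l]
=\frac{\trace(C)\,\delta_{kl}+C_{kl}+C_{lk}}{d(d+2)}.
\end{equation*}
Since $C_{kl}+C_{lk}=2\symC_{kl}$, this is exactly the $(k,l)$ entry of $\tfrac{2\symC+\trace(C)I_d}{d(d+2)}$.

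The only step that needs justification beyond bookkeeping is the independence of $\norm{g}$ and $g/\norm{g}$. This follows from the rotation invariance of $\mc N(0,I_d)$: in polar coordinates the density factorizes into a radial part and the surface measure $\sigma_{\bb S^{d-1}}$ (cf.~\cite[Ex.~3.3.7]{vershynin2018highdimprob}).

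If one prefers to avoid Gaussian arguments, there is an alternative route via symmetry and normalization. Sign and permutation invariance of $\mc U(\bb S^{d-1})$ kill all odd moments and show that $\bb E[x_i^4]=c_1$ and $\bb E[x_i^2x_j^2]=c_2$ for $i\neq j$, independently of the indices. Rotation invariance, applied to $\bigl(\tfrac{x_1+x_2}{\sqrt2}\bigr)^4$, gives $c_1=3c_2$. The constraint $\norm{x}^4=1$ gives $dc_1+d(d-1)c_2=1$. Solving yields $c_2=\tfrac{1}{d(d+2)}$, which recovers the same fourth-moment formula.
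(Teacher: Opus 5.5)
Your proof is correct. For the first identity it coincides with the paper's: the trace trick together with $\bb E[xx^{\tT}]=\tfrac1d I_d$.

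For the second identity the routes genuinely differ.

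\textbf{The paper's route.} It argues structurally. The map $\mc M\colon C\mapsto \bb E[\scp{x}{Cx}xx^{\tT}]$ on $\mc S^d$ is $O(d)$-equivariant. Schur's lemma, applied to the splitting $\mc S^d=\spa(I_d)\oplus\{C\in\mc S^d:\trace(C)=0\}$, then forces $\mc M(C)=\varkappa_1 C+\varkappa_2\trace(C)I_d$. The two constants are read off from $\mc M(I_d)$ and $\mc M(e_1e_1^{\tT})$, i.e.\ from $\bb E[x_1^4]$ and $\bb E[x_1^2x_2^2]$.

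\textbf{Your route.} You compute the entire fourth-moment tensor of $\mc U(\bb S^{d-1})$ from the Gaussian one. You use the independence of $\norm{g}$ and $g/\norm{g}$ together with $\bb E\norm{g}^4=d(d+2)$, and then contract indices against $C$.

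\textbf{What yours buys.} Your argument is more elementary and self-contained.
\begin{enumerate}
\item It needs no real representation theory: neither the irreducibility of the traceless symmetric matrices under $O(d)$, nor the fact that their equivariant endomorphisms are scalars.
\item It treats non-symmetric $C$ directly.
\item It makes explicit a step the paper leaves implicit. Isserlis' theorem is a statement about Gaussian vectors, and it is not obvious how it yields the spherical moments $\tfrac{3}{d(d+2)}$ and $\tfrac{1}{d(d+2)}$; the factor $\bb E\norm{g}^4$ in your proof is exactly that link.
\end{enumerate}

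\textbf{What the paper's buys.} It shows a priori that any rotation-invariant law gives an answer of the form $\varkappa_1\symC+\varkappa_2\trace(C)I_d$. Hence only two scalar moments ever need computing, which is also what makes the tangent-sphere analogue in Cor.~\ref{cor: riemannian hessian estimator} quick.

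Your alternative symmetry argument is essentially the tensor-level version of that reduction. There the constants come from the $\pi/4$-rotation identity $c_1=3c_2$ and $\norm{x}^4=1$ rather than from Gaussian moments.
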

\begin{proof}
    See Appendix~\ref{sec: app}.
\end{proof}

\begin{corollary}
    \label{cor: riemannian hessian estimator}
    Let $A \in \bb R^{m \times d}$ and $B \in \bb R^{\ell \times d}$.
    Then
    \begin{align*}
        \bb E_{x \sim \mc U(\bb S^{d-1})}
        \Bigl[\bigl(\norm{A x}^2 - f(v)\norm{B x}^2\bigr)(xx^{\tT} - \tfrac{1}{d+2} I_d)\Bigr] 
        = \tfrac{2}{d(d+2)} H(v).
    \end{align*}
    and 
    \begin{align*}
        \bb E_{y \sim \mc U(\bb T_v \cap \bb S^{d-1})}
        \Bigl[\bigl(\norm{A y}^2 - f(v)\norm{B y}^2\bigr)(yy^{\tT} - \tfrac{1}{d+1} P_v)\Bigr]
        &= \tfrac{\norm{B v}^2}{(d-1)(d+1)} \hess f(v).
    \end{align*}
\end{corollary}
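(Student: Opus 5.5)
Both identities follow from Prop.~\ref{prop: rie hess} applied to the symmetric matrix $H(v)=A^{\tT}A-f(v)B^{\tT}B$ from \eqref{eq: riemannian hess special}. For fixed $v$ the scalar $f(v)$ is a constant, so $\norm{Ax}^2-f(v)\norm{Bx}^2=\scp{x}{H(v)x}$ is a quadratic form in $x$. Since $H(v)\in\mc S^d$, its Hermitian part is $H(v)$ itself.

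For the first identity, expand the expectation into two pieces by linearity. The second-moment part is given by Prop.~\ref{prop: rie hess} directly:
\begin{equation*}
\bb E[\scp{x}{H(v)x}\,xx^{\tT}]=\tfrac{2H(v)+\trace(H(v))I_d}{d(d+2)}.
\end{equation*}
The correction part uses the first statement of Prop.~\ref{prop: rie hess}:
\begin{equation*}
\tfrac{1}{d+2}\,\bb E[\scp{x}{H(v)x}]\,I_d=\tfrac{\trace(H(v))}{d(d+2)}I_d.
\end{equation*}
Subtracting, the trace terms cancel exactly and $\tfrac{2}{d(d+2)}H(v)$ remains. The shift $-\tfrac{1}{d+2}I_d$ in the statement is chosen precisely to remove the isotropic trace term.

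For the second identity, reduce the tangent-space sphere to a standard sphere of one lower dimension. Assume $d\geq2$, fix $v\in\bb S^{d-1}$, and choose $U\in\bb R^{d\times(d-1)}$ whose columns form an orthonormal basis of $\bb T_v$. Then $U^{\tT}U=I_{d-1}$ and $UU^{\tT}=P_v$. If $z\sim\mc U(\bb S^{d-2})$, then $y=Uz\sim\mc U(\bb T_v\cap\bb S^{d-1})$, because $U$ is an isometry of $\bb R^{d-1}$ onto $\bb T_v$ and so pushes the uniform measure forward to the uniform measure. With $\widetilde H\coloneqq U^{\tT}H(v)U\in\mc S^{d-1}$ we have $\scp{y}{H(v)y}=\scp{z}{\widetilde Hz}$. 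Applying Prop.~\ref{prop: rie hess} in dimension $d-1$ gives
\begin{equation*}
\bb E[\scp{z}{\widetilde Hz}\,zz^{\tT}]=\tfrac{2\widetilde H+\trace(\widetilde H)I_{d-1}}{(d-1)(d+1)},
\qquad
\bb E[\scp{z}{\widetilde Hz}]=\tfrac{\trace(\widetilde H)}{d-1}.
\end{equation*}
Conjugating by $U$ turns $U\widetilde HU^{\tT}$ into $P_vH(v)P_v$ and $UI_{d-1}U^{\tT}$ into $P_v$. Subtracting $\tfrac{1}{d+1}\,\bb E[\scp{z}{\widetilde Hz}]\,P_v$ again cancels the trace term, leaving
\begin{equation*}
\tfrac{2}{(d-1)(d+1)}\,P_vH(v)P_v.
\end{equation*}
Finally, \eqref{eq: riemannian hess special} gives $2P_vH(v)P_v=\norm{Bv}^2\hess f(v)$, which is the claimed identity.

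The only step that is not purely computational is justifying that $y=Uz$ is uniformly distributed on $\bb T_v\cap\bb S^{d-1}$. This follows from the rotation invariance of the surface measure under linear isometries, and it is what allows Prop.~\ref{prop: rie hess} to be used with $d$ replaced by $d-1$. Everything else is bookkeeping of the trace terms.
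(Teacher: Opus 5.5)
Your proposal is correct and follows the paper's route. Both identities come from applying Prop.~\ref{prop: rie hess} to the symmetric matrix $H(v)$, with the $-\tfrac{1}{d+2}I_d$ (resp.\ $-\tfrac{1}{d+1}P_v$) shift cancelling the trace term, and your reduction $y=Uz$ with $z\sim\mc U(\bb S^{d-2})$ supplies the justification for the tangent-sphere moments that the paper's proof only asserts.

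One caveat, shared with the paper's proof rather than introduced by you: the final step relies on \eqref{eq: riemannian hess special}, but $P_v\nabla^2 f(v)P_v$ also contains the term $-\tfrac{2}{\norm{Bv}^2}\bigl(P_vB^{\tT}Bv\,(\grad f(v))^{\tT}+\grad f(v)\,(P_vB^{\tT}Bv)^{\tT}\bigr)$. That term vanishes only at critical points or when $B^{\tT}Bv\in\spa\{v\}$, so the second identity holds exactly only with $\hess f(v)$ read as $\tfrac{2}{\norm{Bv}^2}P_vH(v)P_v$.
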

\begin{proof}
    See Appendix~\ref{sec: app}.
\end{proof}

\begin{algorithm}[htb]
    \caption{
    Curvature-Based},
    $q$-Sample Calculation of the Generalized Operator Norm
    \label{alg:czmgRq}
    \begin{algorithmic}[1]
    \State{\textbf{Given} $A \in \bb R^{m \times d}, B \in \bb R^{\ell \times d}$ with $\rank(B) = d$
    and $q\in\bb N$.}
    \For{$k=0,1,2,\ldots$}
    \State{\textbf{Construct} the projected gradient estimate $\widehat g_q^k\in\bb T_{v^k}$ as in Alg.~\ref{alg:cmgRq}.}
    \State{\textbf{Sample} $z_i^k=P_{v^k}y_i/\norm{P_{v^k}y_i}$ with $y_i\overset{\mathrm{iid.}}{\sim}\mc N(0,I_d)$, $i=1,\ldots,q$.}
    \State{\textbf{Form} the unbiased tangent Hessian estimate
    \begin{equation*}
        \widehat{H}_q^k
        =\frac{(d-1)(d+1)}{d_kq}
        \sum_{i=1}^{q}
        \left(c_{k,i}-\frac{a_k}{d_k}f_{k,i}\right)
        \left(z_i^k(z_i^k)^{\tT}-\frac{1}{d+1}P_{v^k}\right).
    \end{equation*}}
    \State{\textbf{Construct} a matrix $Q_k \in \bb R^{d \times (d-1)}, Q_k Q_k^{\tT} = P_{v^k}, Q_k^{\tT} Q_k = I_{d-1}$.}
    \State{\textbf{Solve on the tangent space}
    \begin{equation*}
        - Q_k^{\tT}\widehat{H}_q^k Q_k p^k = Q_k^{\tT} \widehat g_q^k,
        \qquad p^k \in \bb R^{d-1},
    \end{equation*}}
    \State{\textbf{Set} $x^k = Q_k p^k/\norm{Q_k p^k} \in \bb T_{v^k}$; \textbf{if} $p^k=0$, \textbf{stop}.}
    \State{\textbf{Calculate} the maximizing projective step from the leading generalized eigenvector of $(M_k,N_k)$ as in Thm.~\ref{thm: svd}.}
    \State{\textbf{Update} $v^{k+1}=R_{v^k}(\tau_kx^k)$.}
    \EndFor
    \State{\textbf{Return} $\norm{A/B}\approx\norm{Av^k}/\norm{Bv^k}$.}
    \end{algorithmic}
\end{algorithm}

\begin{remark}[Finite Numbers of Samples vs. Rank-deficient Sample Matrix]
    Notably, finite-sample matrix can be
    rank deficient or indefinite, so a well-posed quasi-Newton step requires
    tangent-space projection and regularization.
\end{remark}


As for the variance-reduced random step directions in Alg.~\ref{alg:czmgRq}, 
the random step directions, fulfilling the linear equation 
involving the variance-reduced estimator for the Riemannian gradient and Hessian,
is not uniformly drawn anymore. 
However, it remains absolutely continuous and, hence,
the update procedure with the optimal step size is still well-defined, almost surely

\section{Computational Complexity}
\label{sec: complexity}

We distinguish forward-operator calls from vector arithmetic. 
Let $T_A$ and $T_B$ denote the costs of applying $A$ and $B$ to one vector. 
Speaking of matrices, this is in $\mc O(md) + \mc O(\ell d) = \mc O(d(m+\ell))$.
In Alg.~\ref{alg: main}, $Av^k$ and $Bv^k$ can be cached,
and each iteration needs one new application of each operator to $x^k$. 
The remaining work is $\mathcal O(d+m+\ell)$,
while the $2\times2$ generalized eigenproblem costs $\mathcal O(1)$. 
Thus the per-iteration cost is $T_A+T_B+\mathcal O(d+m+\ell)$,
with linear storage in the input and output vector dimensions.

The $q$-sample gradient variant requires $q$ applications of each operator
and $\mathcal O(q(d+m+\ell))$ vector work. 
Because $A$ and $B$ are linear,
the images of the averaged direction can be assembled from the sampled images, 
so no additional forward call is required for the line search.

For the curvature variant, 
explicitly assembling the stochastic Hessian costs $\mathcal O(qd^2)$
and a dense factorization costs $\mathcal O(d^3)$. 
This dense implementation is not suitable for the large-scale matrix-free regime. 
Instead, the rank-one representation applies $\widehat H_q^k$ to a vector in $\mc O(qd)$ operations.
An iterative tangent-space solve with $r$ inner iterations
therefore costs $\mathcal O(rqd)$ in addition to the forward applications
used for the gradient and curvature samples. Storing all sampled
directions costs $\mathcal O(qd)$.

These costs clarify the intended use: 
Alg.~\ref{alg: main}, respectively Alg.~\ref{alg:cmgRq}, is fully matrix-free,
whereas the curvature variant, cf.~Alg.~\ref{alg:czmgRq}, 
is scalable only when the regularized system is solved
through matrix-vector products rather than by dense assembly.

\section{Numerical Experiments}
\label{sec: num}

To support the theoretical results with evidence 
this section contains,
first, proof-of-concept results 
on Gaußian matrices
for the proposed Alg.~\ref{alg: main}.
Secondly, 
the methods from \cite{B8,B9} and \cite{li2023stochastic}
are qualitatively compared with Alg.~\ref{alg: main}, respectively Alg.~\ref{alg:cmgRq},
on Gaußian matrices of different sizes 
and specifically constructed $B$ 
controlling the ill-conditioning of the setting.
Thirdly, Alg.~\ref{alg:czmgRq}
is quantitatively compared to the methods from \cite{B8,B9}
and Alg.~\ref{alg:cmgRq}.
Lastly, a genuine matrix-free example 
from \textit{signal-to-noise ratio} 
indicates the state-of-the-art behavior 
of the proposed methods.

All algorithms are implemented in Python and the code is publicly available%
\footnote{%
\url{https://github.com/JJEWBresch/ZerothOrderGeneralizedOperatorNorm}.}.
The experiments are performed on an off-the-shelf MacBook Pro 2020 
with Intel Core i5 (4‑Core CPU, 1.4~GHz) and 8~GB~RAM.

\subsection{Proof-of-concept Results}
\label{sec: num proof of concept}

In this first discussion,
we aim to substantiate the success of the proposed methods,
cf.~Alg.~\ref{alg: main},
and the $q$-sample variance-reduced approach, cf.~Alg.~\ref{alg:cmgRq},
by proof-of-concept experiments.

Here,
we choose $d \in \{10,100,1000\}$ for the Gaußian matrices
$A \in \bb R^{d \times d}$ and $B \in \bb R^{2d \times d}$.
For a qualitative comparison,
we consider the resulting \emph{relative quotient error},
defined as 
\begin{equation*}
    \textrm{RQE}_k 
    \coloneqq 
    \frac{\norm{A/B}^2 - \tfrac{\norm{A v^k}^2}{\norm{B v^k}^2}}{\norm{A/B}^2},
\end{equation*}
and the relative error of the generalized eigen equation
\begin{equation*}
    \textrm{MSQR}_k 
    \coloneqq 
    \min_{n\leq k} \; \norm[\Big]{A^{\tT}A v^n - \tfrac{\norm{A v^n}^2}{\norm{B v^n}^2} B^{\tT}B v^n}^2,
\end{equation*}
i.e. the \emph{minimal squared residual} in the eigenvector equation.
Additionally,
we consider the sequence $\abs{\alpha_k}_{k \in \bb N} = \abs{b_kd_k - a_ke_k}_{k \in \bb N}$, 
respectively $\abs{b_k}_{k \in \bb N}$ from \cite{B9}.
Both latter
are estimating the length of the Riemannian gradients%
---a stochastic diagnostic for stationarity---%
since
\begin{equation}
    \label{eq: riemannian gradient estimate}
    \bb E_{x^k\sim\mc U(\bb S^{d-1})\mid V^k=v^k}
    \left[\left(\tfrac{2\sqrt d}{d_k^2}\alpha_k\right)^2\right]
    =\norm{\grad f(v^k)}^2.
\end{equation}
see Thm.~\ref{thm: riemannian style},
and \cite{B8,B9}, respectively.
Furthermore, 
the sequence of step sizes is reported in Fig.~\ref{fig: proof of concept} and~\ref{fig:exp_x}.

The resulting means over $50$ runs of Alg.~\ref{alg: main} and~\ref{alg:cmgRq}
for different estimates 
and numbers of samples are reported in Fig.~\ref{fig: proof of concept}.

We observe a tremendous improvement by the $q$-sampling approach 
compared to a single, 
uniform at random step direction on the unit sphere. 
Besides the observable linear convergence rate of the RQE, 
the MSQE as well as the estimate for the length of the true Riemannian gradient, 
is doing much better than a sublinear behavior.
The latter is absent in theory for the proposed method,
as discussed in Rem.~\ref{rem: absent of rate} and Appendix~\ref{sec: alg}.
However,
a sublinear rate is proven for the baseline methods \cite{B8,B9,B12} for $\mc R(A^{\tT}A, B^{\tT}B)$, 
cf.~\eqref{eq: gen rayleigh}.
Notably, 
in \cite{li2023stochastic} it is only proven for critical points.
Furthermore,
since we do not report a minimal value until the $k$-th iteration of $|\alpha_k| = |b_k d_k - a_k e_k|$ 
(third row, Fig.~\ref{fig: proof of concept}),
we observe a linear convergence behavior to critical points%
--without a square for the estimates. 
Combined with the (observable linear) convergence to the global maximum, 
as it is proven in Thm.~\ref{thm: convergence as},
this has clear advantages to state-of-the-art methods
\cite{li2023stochastic,li2014sketching} from §~\ref{sec: zeroth order}, 
cf.~Alg.~\ref{alg: zeroth order} from Appendix~\ref{sec: alg}.

\begin{figure}
\resizebox{\linewidth}{!}{%
    \begin{threeparttable}
    \begin{tabular}{c c c c}
        & $d = m = \tfrac{\ell}{2} = 10$ & $d = m = \tfrac{\ell}{2} = 100$ & $d = m = \tfrac{\ell}{2} = 1000$ \\
        \rotatebox{90}{
        \hspace{0.1cm} $\abs{b_kd_k - a_ke_k}$ \hspace{0.6cm} $\textrm{MSQR}_k$ \hspace{2.1cm} $\textrm{RQE}_k$}
        & \includegraphics[width=0.5\linewidth, clip=true, trim=10pt 190pt 30pt 80pt]{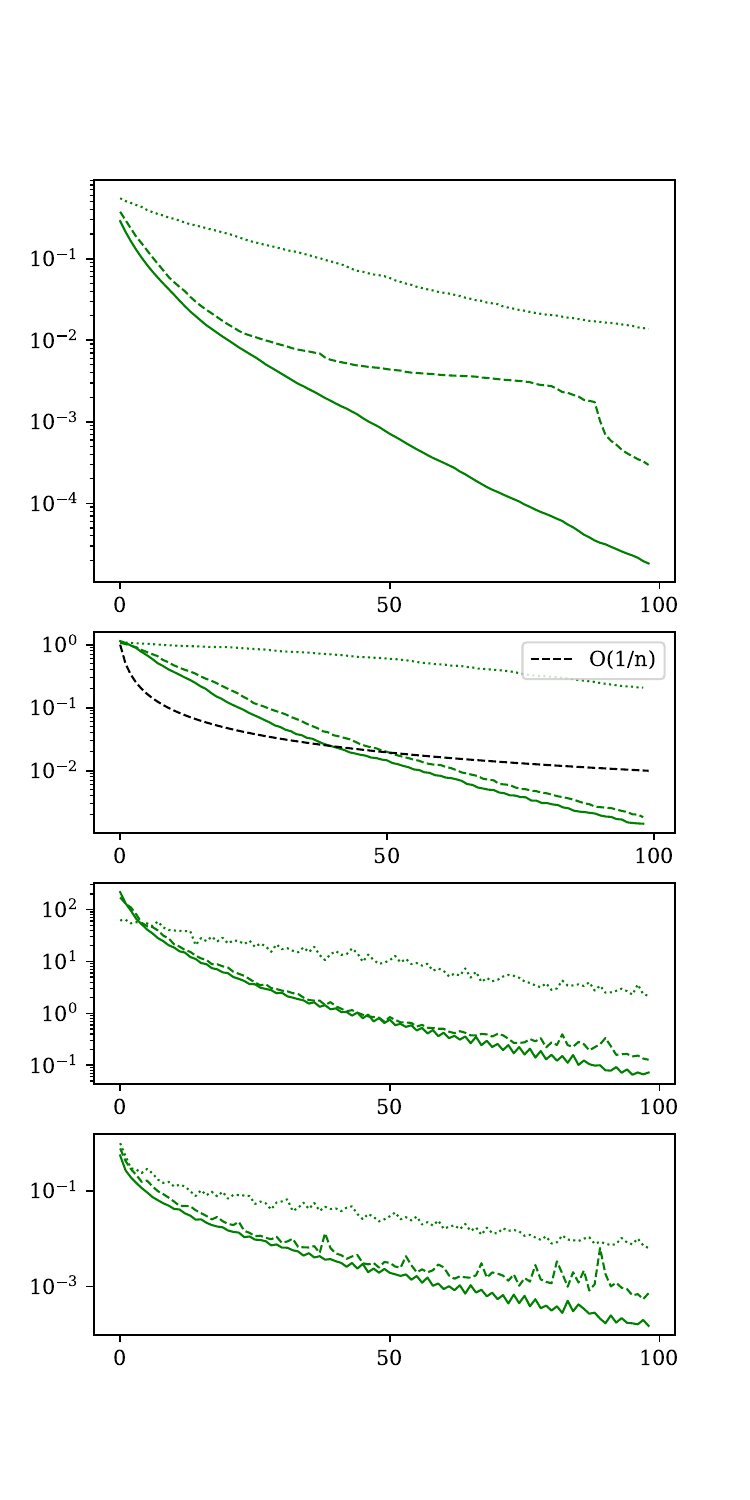}
        & \includegraphics[width=0.5\linewidth, clip=true, trim=10pt 190pt 30pt 80pt]{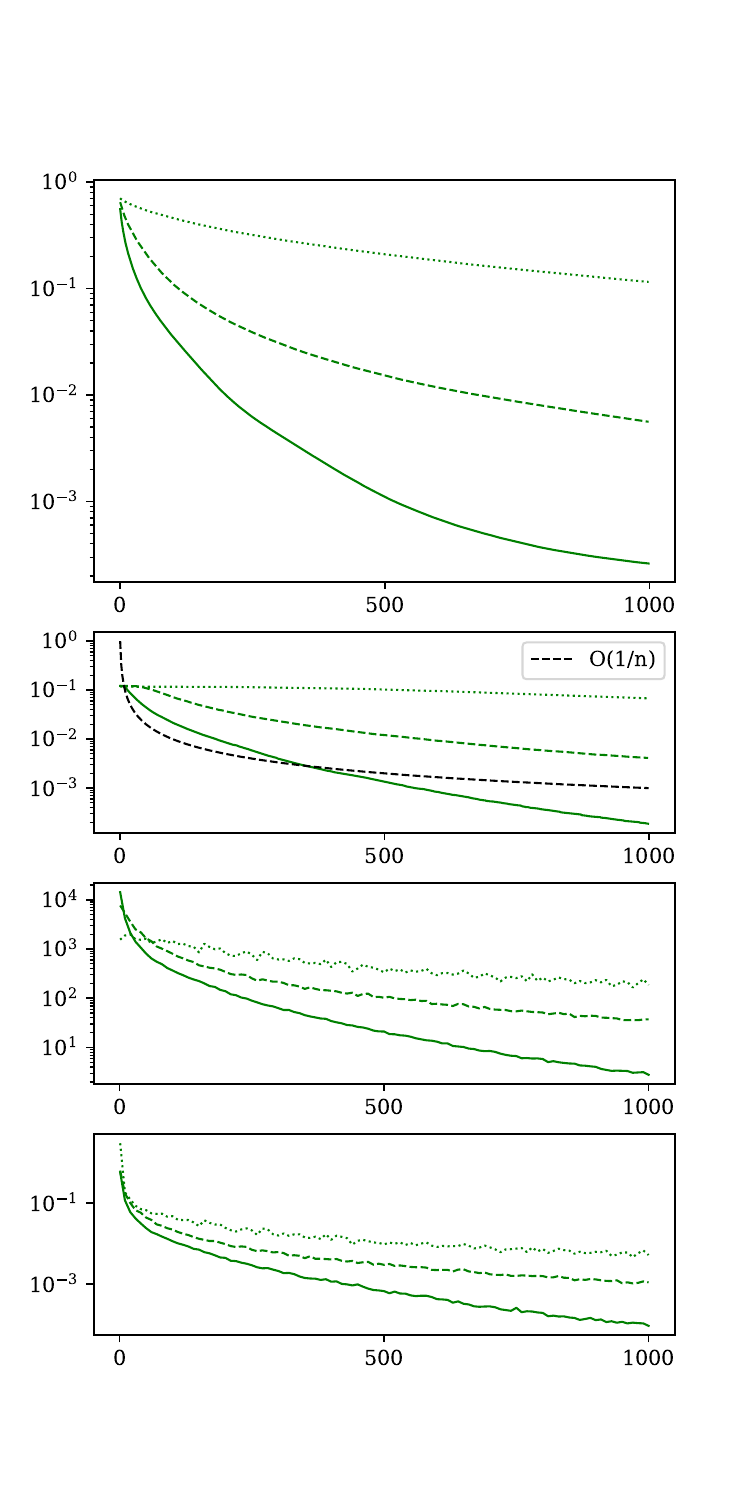}
        & \includegraphics[width=0.5\linewidth, clip=true, trim=10pt 190pt 30pt 80pt]{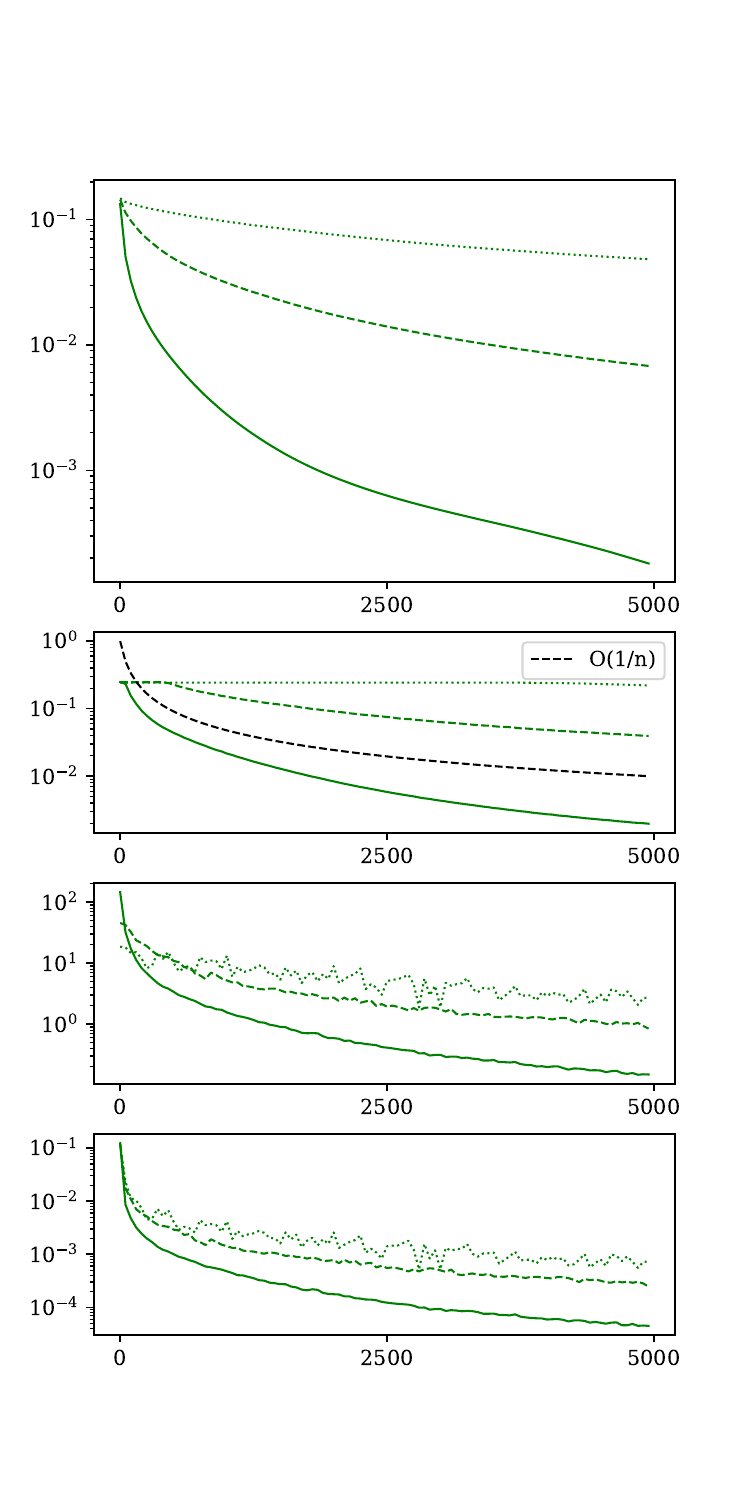}\\
        & Iterations $k$ & Iterations $k$ & Iterations $k$
    \end{tabular}
    \begin{tablenotes}
    \item[] \scalebox{1.2}{\hspace{3.4cm} $q=1$ \linegraydotted \hspace{2cm} $q=10$ \linegraydashed \hspace{2cm} $q=100$ \linegray}
    \end{tablenotes}
    \end{threeparttable}
    }
    \caption{Proof-of-concept experiment 
    on Alg.~\ref{alg:cmgRq} 
    for different sizes of matrices $(A, B)$.
    For all quantities 
    the mean over $50$ runs
    on random Gaußian matrices $(A, B)$ is reported.%
    }
    \label{fig: proof of concept}
\end{figure}

\subsection{Comparison with \cite{B9} and \cite{li2023stochastic}}
\label{sec: num comp}

As indicated in §~\ref{sec: num proof of concept}
a comparison with the recently proposed zeroth-order methods \cite{B8,B9}
is valuable.
In \cite{B9},
it is empirically shown,
and theoretically verified due to the optimal step-size selection 
in the two-frame $\{v^k, x^k\}$, see §~\ref{sec: construction},
that the tangent space construction, cf.~\eqref{eq: tangent surrogate},
outperforms state-of-the-art methods from \cite{li2023stochastic,boumal2023introduction},
cf.~Alg.~\ref{alg: zeroth order}, 
for the extremal generalized Rayleigh quotient \eqref{eq: gen rayleigh}
on the $B^{\tT}B$-sphere.
Here, we additionally compare the proposed method 
with the zeroth-order Riemannian method 
equipped with an Armijo line search
for the generalized operator norm 
on the unit sphere as baseline.
This is an improved version of \cite{li2023stochastic}
since convergence to critical points is theoretically only verified 
under constant step size, see Appendix~\ref{sec: app}.


\begin{figure}
    \resizebox{\linewidth}{!}{%
    \begin{threeparttable}
    \begin{tabular}{c c c c}
        & $d = m = \tfrac{\ell}{2} = 10$ & $d = m = \tfrac{\ell}{2} = 50$ & $d = m = \tfrac{\ell}{2} = 100$ \\
        \rotatebox{90}{\hspace{0.1cm} ${\color{orange}\abs{b_kd_k - a_ke_k}}, {\color{blue}\abs{b_k}}$ \hspace{0.4cm} $\textrm{MSQR}_k$ \hspace{1.9cm} $\textrm{RQE}_k$}
        & \includegraphics[width=0.5\linewidth, clip=true, trim=10pt 180pt 30pt 80pt]{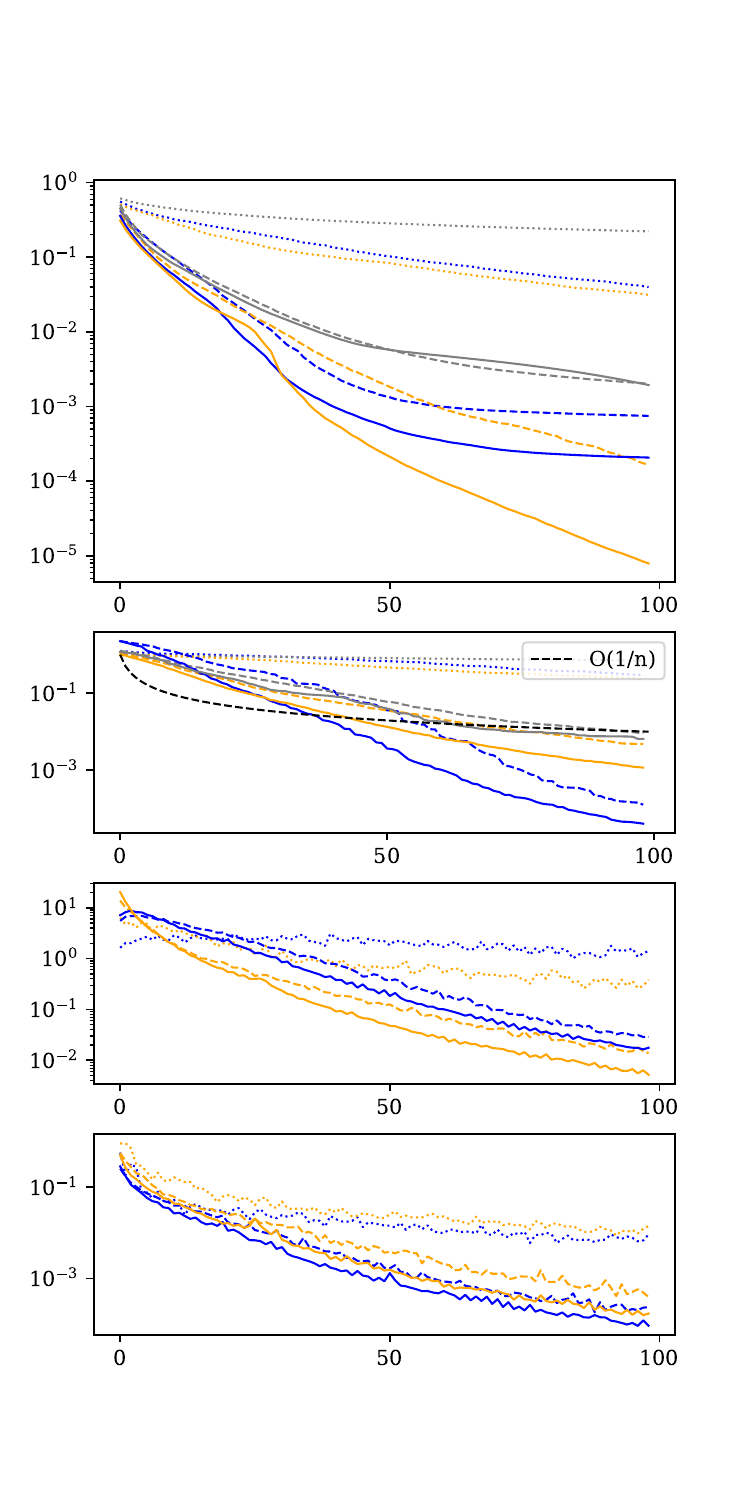}
        & \includegraphics[width=0.5\linewidth, clip=true, trim=10pt 180pt 30pt 80pt]{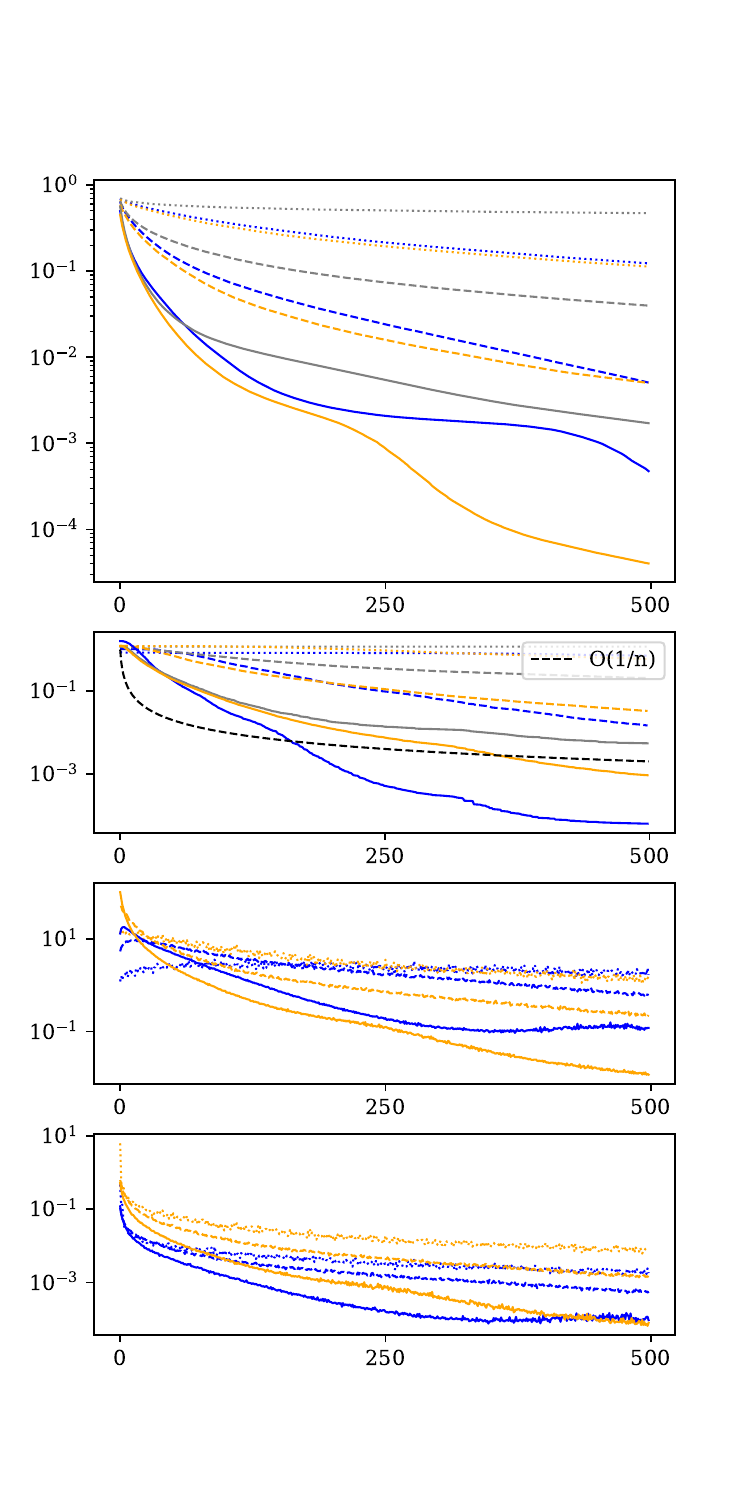}
        & \includegraphics[width=0.5\linewidth, clip=true, trim=10pt 180pt 30pt 80pt]{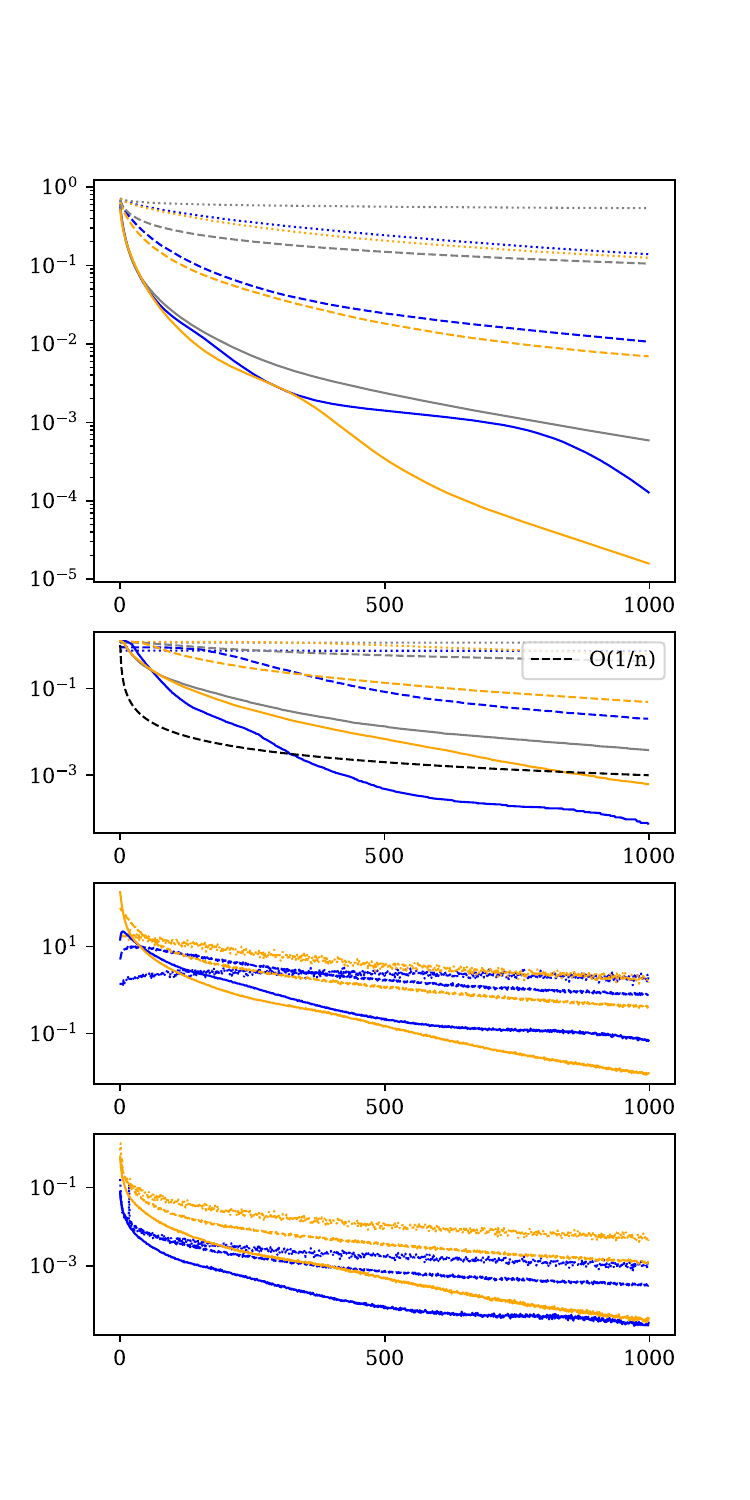}\\
        & Iterations $k$ & Iterations $k$ & Iterations $k$
    \end{tabular}
    \begin{tablenotes}
    \item[] \scalebox{1.2}{\hspace{4cm} $q=1$ \linegraydotted \hspace{2cm} $q=10$ \linegraydashed \hspace{2cm} $q=100$ \linegray}
    \end{tablenotes}
    \end{threeparttable}
    }
    \caption{Comparison 
    of Alg.~\ref{alg:cmgRq} (orange),
    \cite[Alg.~2]{B9} (blue) (on the extremal generalized Rayleigh quotient \eqref{eq: gen rayleigh}),
    and \cite{li2023stochastic} with Armijo line search, $(\mu_k)_{k \in \bb N} \in \mc O(1/k)$ (gray) 
    for the generalized operator norm
    for different sizes of matrices $(A, B)$.
    For all quantities 
    the mean over $50$ runs
    on random Gaußian matrices $(A, B)$ is reported.
    }
    \label{fig:exp_x}
\end{figure}

The experiments from §~\ref{sec: num proof of concept} are repeated.

It can be observed that the performances of both methods,
the proposed one, cf.~Alg.~\ref{alg: main}, respectively~\ref{alg:cmgRq},
and the zeroth-order $q$-sample approach \cite{B9} 
for the corresponding extremal generalized Rayleigh quotient 
are comparable in lower dimensions.
In higher dimensions ($m \gg 10$) 
the new sampling procedure,
cf.~\eqref{eq: x^k},
has lower mean error than the method
using \eqref{eq: tangent surrogate} from \cite{B9}.
This observation can be linked to the wider sampling space $\bb S^{d-1}$, 
which is not restricted to the tangent space $\bb T_{v^k}$ at any iteration.
Moreover, the projection in each iteration, especially for \cite[Alg.~2]{B9}, 
requires (at least) matrix-matrix multiplications,
where Alg.~\ref{alg:cmgRq} does not.
This reduces the computation time in higher dimensions
as it can be observed in Fig.~\ref{fig:exp_xt}.

Even though it is not possible to prove any convergence rate 
for the considered estimates, cf.~Rem.~\ref{rem: absent of rate},
as it is possible in \cite{B8,B9,B12}, 
Fig.~\ref{fig:exp_x} and~\ref{fig:exp_xt} indicate a convergence rate $\mathcal O(\tfrac{1}{k})$ 
for MSQR and a linear rate for RQE.

\begin{figure}
    \resizebox{\linewidth}{!}{%
    \begin{threeparttable}
    \begin{tabular}{c c c c}
        & $d = m = \tfrac{\ell}{2} = 10$ & $d = m = \tfrac{\ell}{2} = 50$ & $d = m = \tfrac{\ell}{2} = 100$ \\
        \rotatebox{90}{\hspace{1.0cm} $\textrm{MSQR}_t$ \hspace{2.8cm} $\textrm{RQE}_t$}
        & \includegraphics[width=0.5\linewidth, clip=true, trim=10pt 40pt 30pt 50pt]{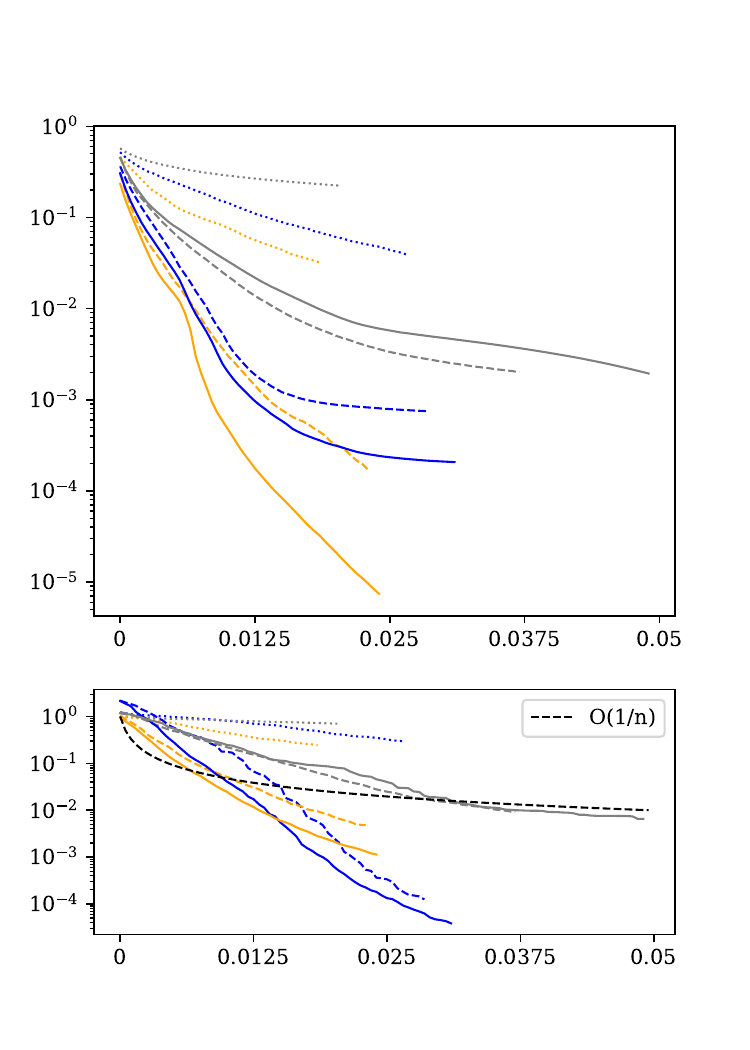}
        & \includegraphics[width=0.5\linewidth, clip=true, trim=10pt 40pt 30pt 50pt]{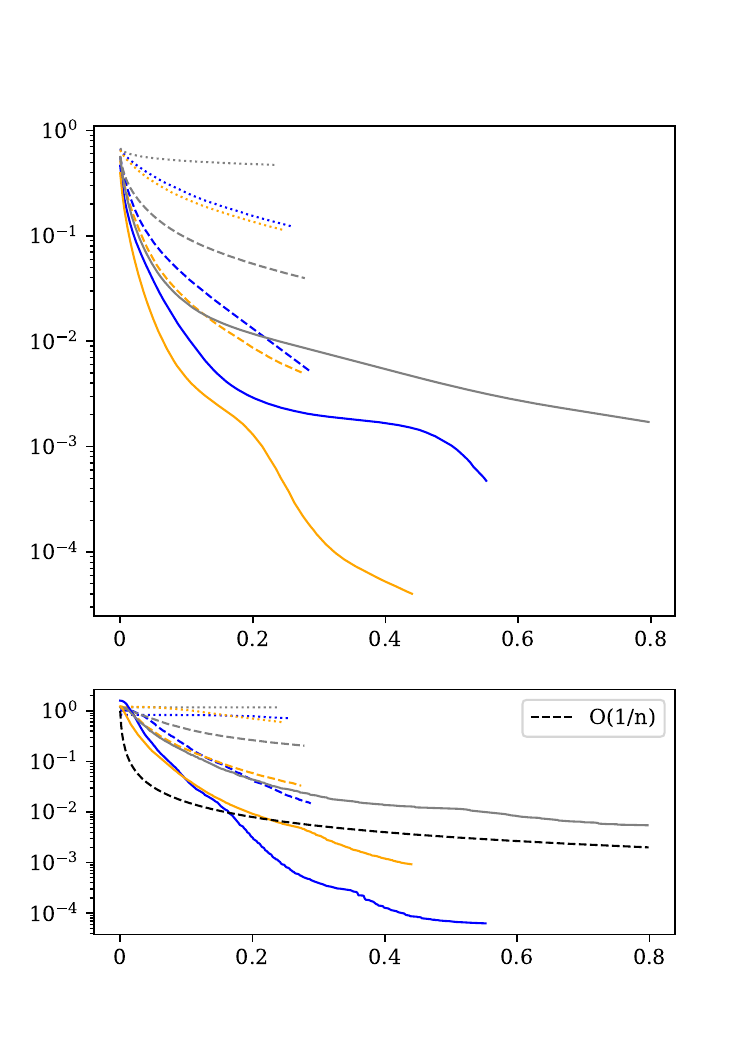}
        & \includegraphics[width=0.5\linewidth, clip=true, trim=10pt 40pt 30pt 50pt]{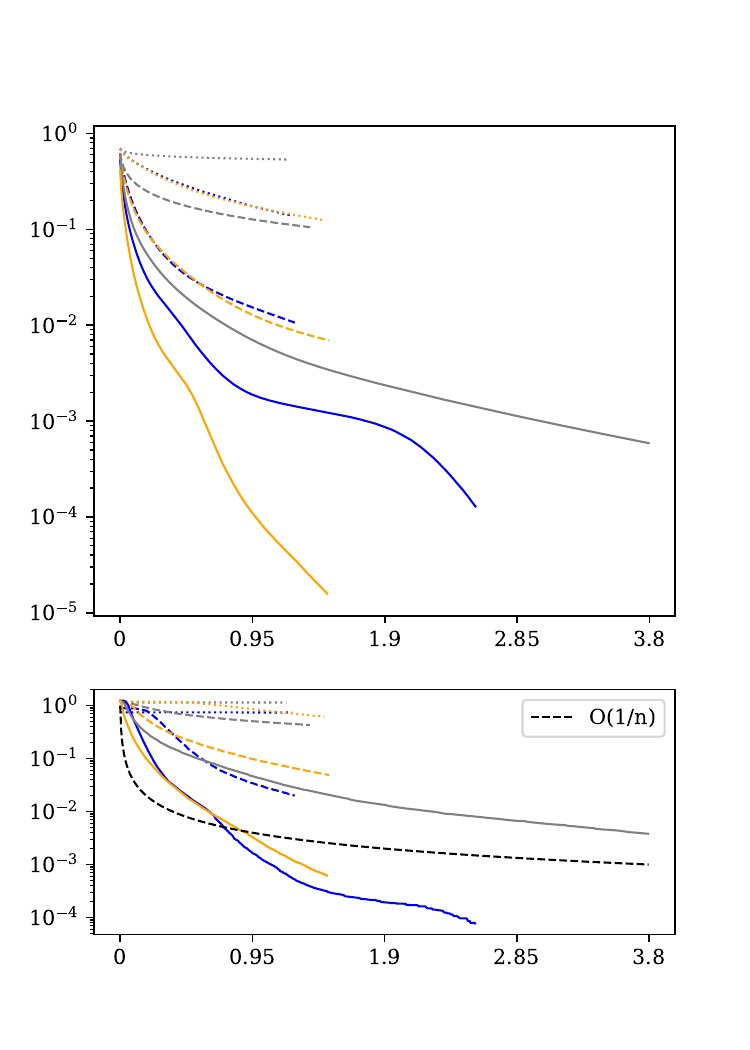} \\
        & Time, sec. $t$ & Time, sec. $t$ & Time, sec. $t$
    \end{tabular}
    \begin{tablenotes}
    \item[] \scalebox{1.2}{\hspace{4cm} $q=1$ \linegraydotted \hspace{2cm} $q=10$ \linegraydashed \hspace{2cm} $q=100$ \linegray}
    \end{tablenotes}
    \end{threeparttable}
    }
    \caption{Comparison
    from Fig.~\ref{fig:exp_x} by time rescaling.
    }
    \label{fig:exp_xt}
\end{figure}

\subsection{Study of Scalability}

In order to move beyond proof-of-concept results 
as provided in §~\ref{sec: num proof of concept}
and §~\ref{sec: num comp}
and conduct more in-depth studies, 
we are now comparing the considered methods from §~\ref{sec: num comp}
and vary the conditioning of \eqref{eq: prob}, i.e. of $B^{\tT}B$.
Therefore,
instead of Gaußian matrices,
we consider $r \in \{1,2,3\}$ 
and $B = \diag(10^0,..., 10^{r\nicefrac{i-1}{49}}, ...,10^r)^{\nicefrac{1}{2}}Q \in \bb R^{50\times50}$
where $Q \in \bb R^{50\times50}$ is uniformly sampled from the orthogonal group.

The time-rescaled means over $50$ runs of the methods
on Gaussian matrices $A$ and random sampled matrices $B$ from above
for the estimates as in §~\ref{sec: num comp} are reported in Fig.~\ref{fig:exp_t}.

We observe similar behaviors for Alg.~\ref{alg:cmgRq},
and the baseline method from \cite{B9} for the corresponding extremal generalized Rayleigh quotient computation in better conditioned settings.
For all settings of $r \in \{1,2,3\}$, 
the advantages of the proposed method 
over zeroth-order methods from the literature \cite{li2023stochastic}
are visible from the RQE and MSQE.
For the worst case, 
here $\kappa(B^{\tT} B) \approx 10^3$,
the advantage of the proposed method
to \cite{B9} becomes significant.

\begin{figure}
    \resizebox{\linewidth}{!}{%
    \begin{threeparttable}
    \begin{tabular}{c c c c}
        & $r=1$ & $r=2$ & $r=3$ \\
        \rotatebox{90}{\hspace{1.0cm} $\textrm{MSQR}_t$ \hspace{2.8cm} $\textrm{RQE}_t$}
        & \includegraphics[width=0.5\linewidth, clip=true, trim=10pt 40pt 30pt 50pt]{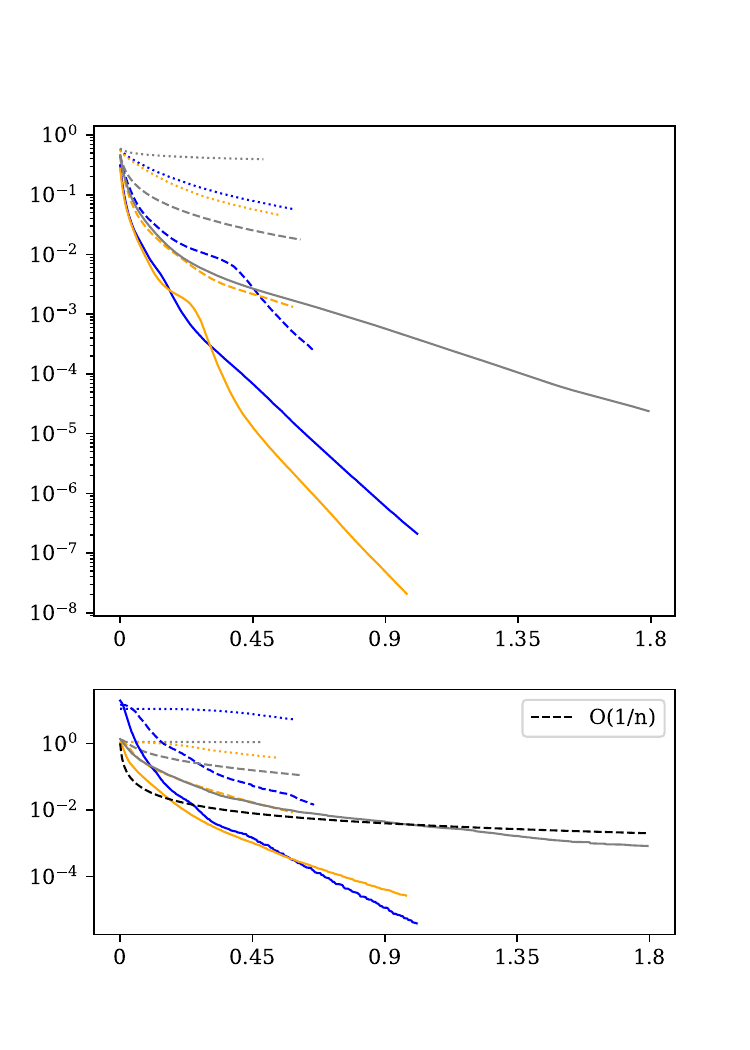}
        & \includegraphics[width=0.5\linewidth, clip=true, trim=10pt 40pt 30pt 50pt]{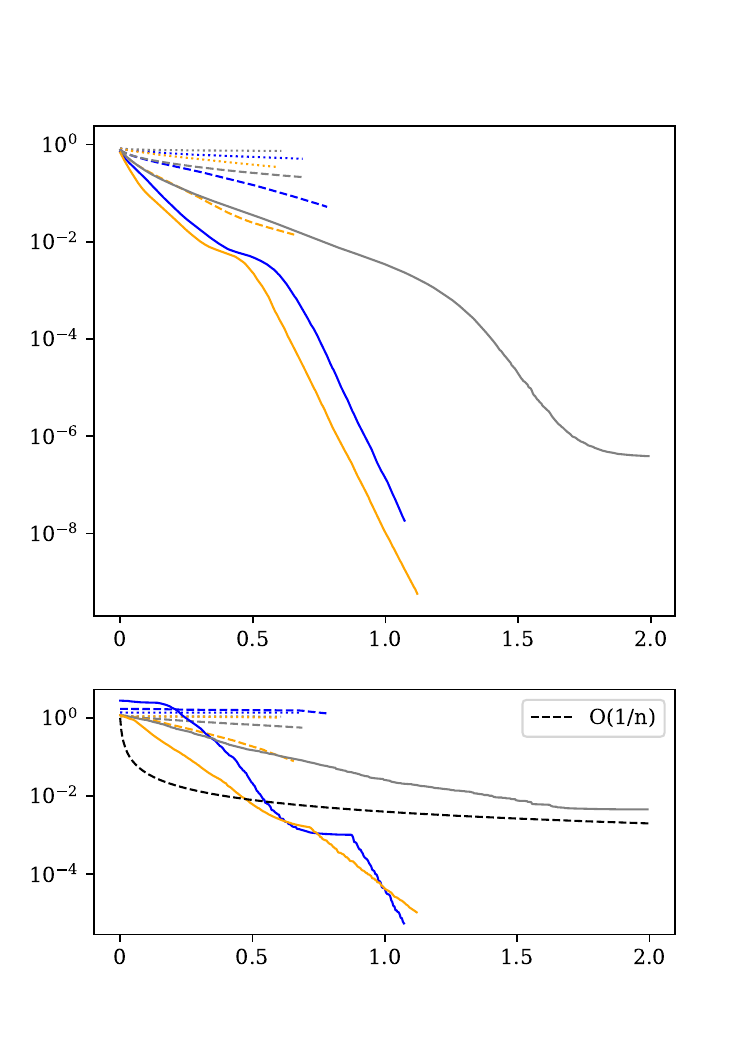}
        & \includegraphics[width=0.5\linewidth, clip=true, trim=10pt 40pt 30pt 50pt]{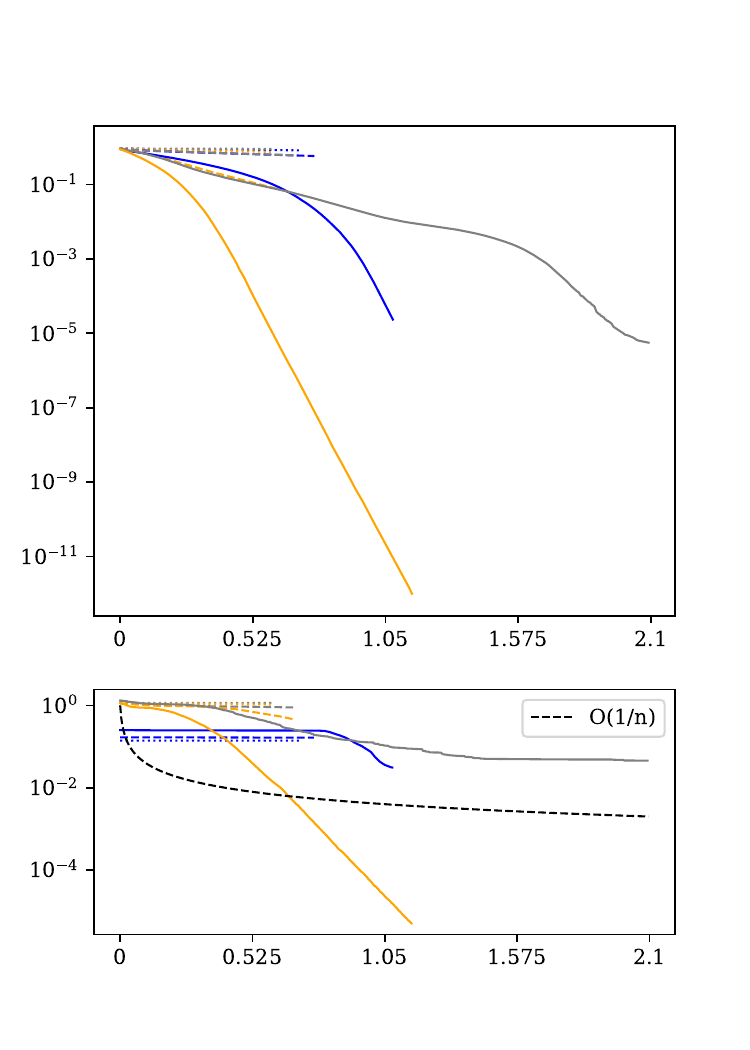} \\
        & Time, sec. $t$ & Time, sec. $t$ & Time, sec. $t$
    \end{tabular}
    \begin{tablenotes}
    \item[] \scalebox{1.2}{\hspace{3.4cm} $q=1$ \linegraydotted \hspace{2cm} $q=10$ \linegraydashed \hspace{2cm} $q=100$ \linegray}
    \end{tablenotes}
    \end{threeparttable}
    }
    \caption{Qualitative comparison
    of the methods from §~\ref{sec: num comp} as in Fig.~\ref{fig:exp_xt}
    on ill-conditioned problems by time rescaling.
    }
    \label{fig:exp_t}
\end{figure}

\subsection{Zeroth-Order Curvature-Based Method}
\label{sec: num-second}

Lastly,
we compare the novel sampling approach on the unit sphere, see Alg.~\ref{alg:cmgRq},
with the equipped $q$-sample variance-reduced zeroth-order estimator 
of the Riemannian Hessian, see Alg.~\ref{alg:czmgRq}, cf.~Cor.~\ref{cor: riemannian hessian estimator}.
We, again, report the estimates introduced in §~\ref{sec: num proof of concept}
and run Alg.~\ref{alg:cmgRq}, Alg.~\ref{alg:czmgRq}, and Alg.~\ref{alg: zeroth order}
in the setting as before $50$ times for different sizes of Gaussian matrices $(A, B)$.
The resulting mean of the estimates are shown in Fig.~\ref{fig: second comparison}.

We observe that the quasi-Newton variant Alg.~\ref{alg:czmgRq} of Alg.~\ref{alg:cmgRq}
shows a stricter convergence behavior 
not only for the RQE 
but also for the MSQR compared to the original method.
Especially in higher dimensions, 
this effect becomes even more visible. 
This shows the advantage of including curvature information
of the function \eqref{eq: obj} from higher order
and empirically closes the vulnerable point of the original proposed methods,
i.e. Alg.~\ref{alg:cmgRq} and \cite[Alg.~2]{B9},
for reduced convergence speed 
when being close to critical points.
This behavior is well-known from classical first-order Riemannian ascent methods \cite{li2023stochastic},
as drawn in §~\ref{sec: first order}. i.e. Alg.~\ref{alg: first order} in Appendix~\ref{sec: alg}. 
Here, only the maximizers and not the saddle points 
are the problematic regions for the original proposed methods.
However, 
the method has a higher per-iteration cost 
and no improved convergence rate is proved.

\begin{figure}
    \resizebox{\linewidth}{!}{%
    \begin{threeparttable}
    \begin{tabular}{c c c c}
        & $d = m = \tfrac{\ell}{2} = 10$ & $d = m = \tfrac{\ell}{2} = 50$ & $d = m = \tfrac{\ell}{2} = 100$ \\
        \rotatebox{90}{\hspace{0.8cm} $\textrm{MSQR}_k$ \hspace{2.0cm} $\textrm{RQE}_k$}
        & \includegraphics[width=0.5\linewidth, clip=true, trim=10pt 300pt 30pt 80pt]{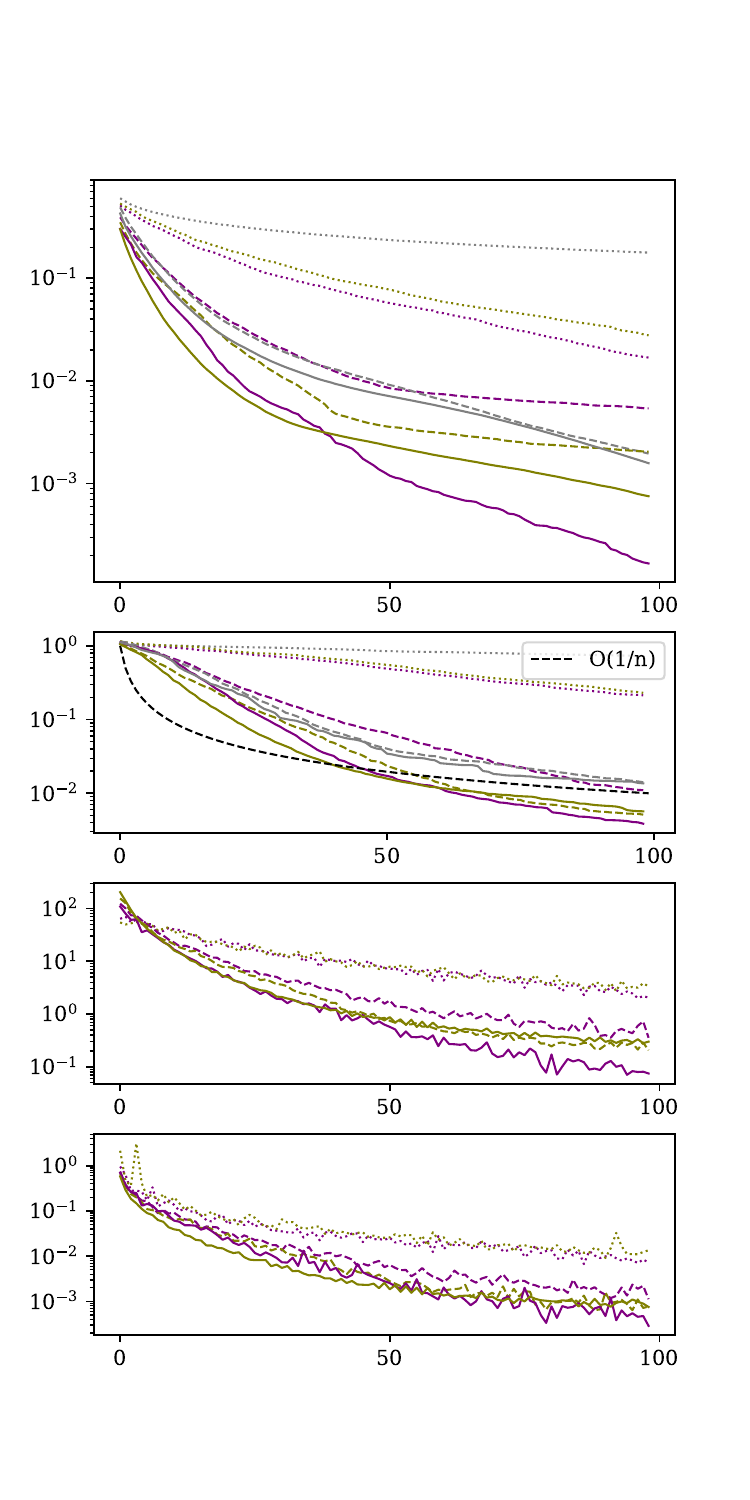}
        & \includegraphics[width=0.5\linewidth, clip=true, trim=10pt 300pt 30pt 80pt]{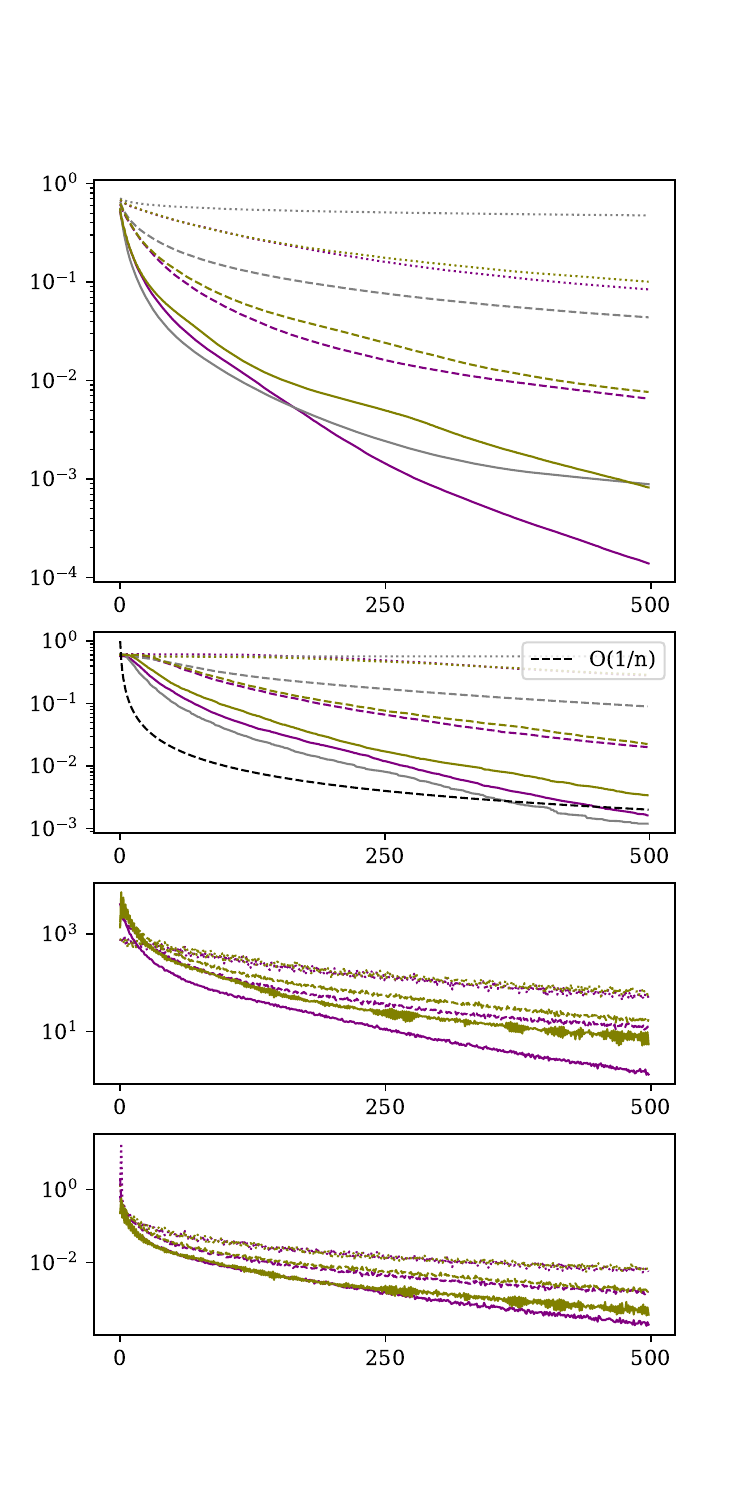}
        & \includegraphics[width=0.5\linewidth, clip=true, trim=10pt 300pt 30pt 80pt]{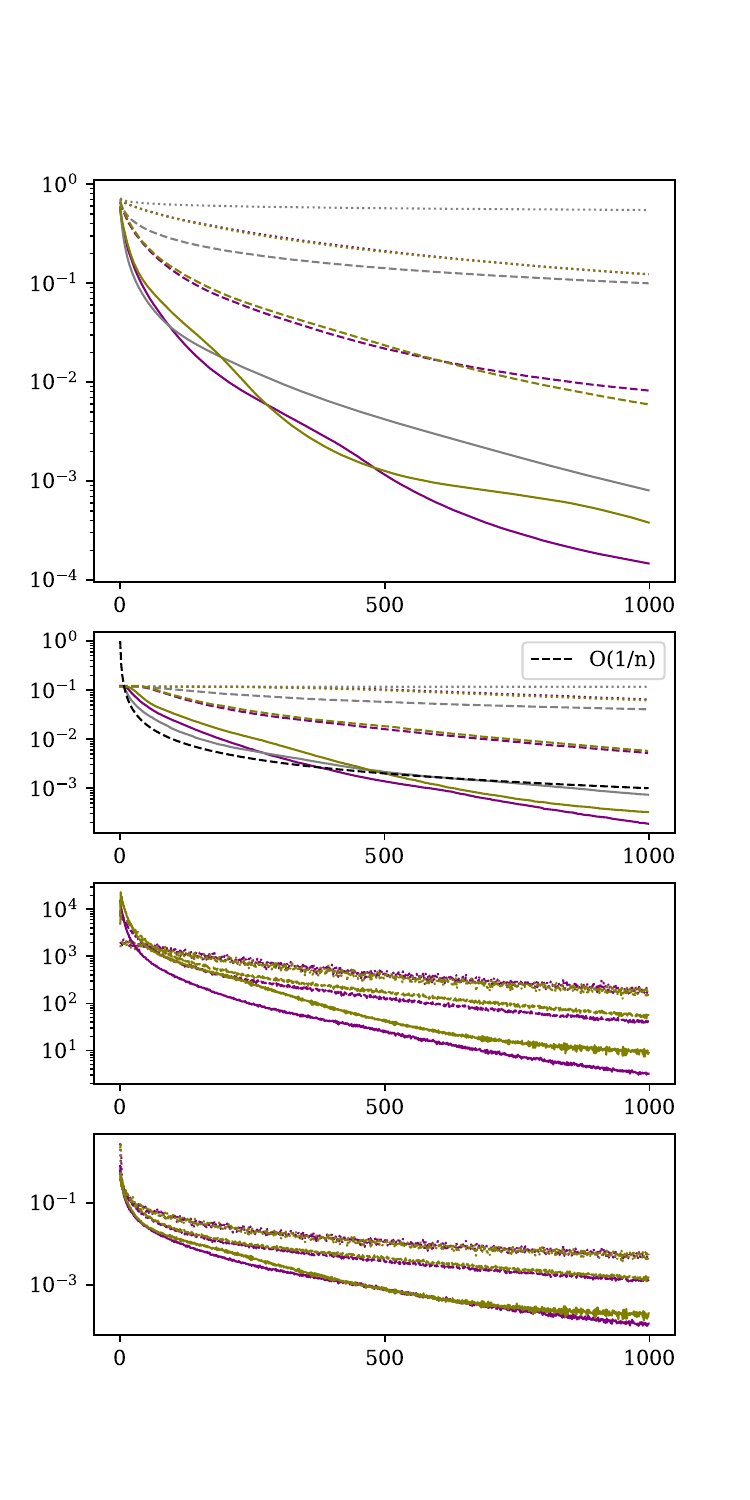} \\
        & Iterations $k$ & Iterations $k$ & Iterations $k$
    \end{tabular}
    \begin{tablenotes}
    \item[] \scalebox{1.2}{\hspace{3.4cm} $q=1$ \linegraydotted \hspace{2cm} $q=10$ \linegraydashed \hspace{2cm} $q=100$ \linegray}
    \end{tablenotes}
    \end{threeparttable}
    }
    \caption{Comparison 
    of Alg.~\ref{alg:cmgRq} (olive), Alg.~\ref{alg:czmgRq} (purple),
    and \cite{li2023stochastic} with Armijo line search.
    for different sizes of matrices $(A, B)$.
    For all quantities 
    the mean over $50$ runs
    on random Gaußian matrices $(A, B)$ is reported.
    }
    \label{fig: second comparison}
\end{figure}

\subsection{Maximum-SNR Beamforming}
\label{sec:snr-beamforming}

As a structured signal-processing application, 
we consider maximum \emph{signal-to-noise ratio} (SNR) beamforming 
for a microphone array \cite{vantrees2002optimumarray,benesty2008microphone,souden2010optimal}. 
The objective is to determine a spatial filter 
that preserves a target source 
while suppressing directional interference and sensor noise. 
This problem is a natural instance of the generalized operator-norm problem 
because the target and noise powers are represented by two different data operators \cite{heymann2016neural}. 
Moreover, products with these operators can be evaluated directly from multichannel snapshots, 
without explicitly forming the corresponding covariance matrices.

\paragraph{Speech and Noise Snapshots.}
Let $M$ be the number of microphones.
At a fixed frequency $f_0$, 
let $s_t\in\mathbb{C}^{M},$ for $t=1,\ldots,T_s$,
denote target-speech snapshots, 
and let $n_t\in\mathbb{C}^{M}$, for $t=1,\ldots,T_n$,
denote noise-only snapshots. 
We collect the samples in
\[
    S=
    \begin{bmatrix}
    s_1 & \cdots & s_{T_s}
    \end{bmatrix}
    \in\mathbb{C}^{M\times T_s},
    \qquad
    N=
    \begin{bmatrix}
    n_1 & \cdots & n_{T_n}
    \end{bmatrix}
    \in\mathbb{C}^{M\times T_n}.
\]
For a beamforming vector $w\in\mathbb{C}^{M}$, the empirical target and
noise powers at the beamformer output are
\begin{equation}
    \label{eq: mat snr}
    \begin{aligned}
    P_s(w)
    &=
    \frac{1}{T_s}
    \sum_{t=1}^{T_s}
    |w^{*}s_t|^2
    =
    w^{*}R_s w,
    \qquad
    R_s=\frac{SS^{*}}{T_s}, \\
    P_n(w)
    &=
    \frac{1}{T_n}
    \sum_{t=1}^{T_n}
    |w^{*}n_t|^2
    =
    w^{*}R_n w,
    \qquad
    R_n=\frac{NN^{*}}{T_n}.
    \end{aligned}
\end{equation}
To avoid a singular or nearly singular denominator, we introduce a diagonal
loading parameter $\delta>0$ and define the regularized empirical SNR \cite{carlson1988diagonalLoading,li2003robustCapon} by
\[
    f_\delta(w)
    =
    \frac{P_s(w)}
    {P_n(w)+\delta\|w\|_{\bb C}^2},
    \quad \text{with} \quad 
    \|w\|_{\bb C}^2 \coloneqq w^*w \coloneqq \overline{w}^{\tT}w
\]
the complex inner product.
Define
\begin{equation}
    \label{eq: mat A B snr}
    A
    =
    \frac{S^{*}}{\sqrt{T_s}}
    \in\mathbb{C}^{T_s\times M},
    \qquad
    B
    =
    \begin{bmatrix}
    N^{*}/\sqrt{T_n}\\[1mm]
    \sqrt{\delta}\,I_M
    \end{bmatrix}
    \in\mathbb{C}^{(T_n+M)\times M}.
\end{equation}
Then it holds $\|Aw\|_{\bb C}^2=P_s(w)$ and $\|Bw\|_{\bb C}^2=P_n(w)+\delta\|w\|_{\bb C}^2$
and the beamforming problem is exactly
\[
    \max_{\|w\|_{\bb C}\neq0} f_\delta(w)
    =
    \max_{\|w\|_{\bb C}\neq0}
    \frac{\|Aw\|_{\bb C}^2}{\|Bw\|_{\bb C}^2}
    \coloneqq 
    \left\|A/B\right\|_{\bb C}^2.
\]
The maximizing vector is the spatial filter with the largest
regularized output SNR.

\paragraph{Real Formulation.}
The algorithms in this work are implemented for real vectors. 
For a complex matrix $C \in \bb C^{m\times d}$, 
we use, similar to \cite[App.~B]{B9}
\[
    M_{\Re}(C)
    \coloneqq
    \begin{bmatrix}
    \Re(C) & -\Im(C)\\
    \Im(C) &  \Re(C)
    \end{bmatrix} \in \bb R^{2m\times2d}.
\]
Writing for $w = \Re[w] + \rm i \, \Im[w] \in \bb C^d$
vectorized $\vv{w} \coloneqq [\Re[w], \Im[w]]^{\tT} \in \bb R^{2d}$
gives $\|M_{\Re}(A) \vv{w}\| = \|A w\|_{\bb C}$ and $\|M_{\Re}(B) \vv{w}\| = \|B w\|_{\bb C}$.
Thus, the complex beamforming problem is equivalent to the real generalized
operator-norm problem associated with $M_{\bb R}(A)$ and $M_{\bb R}(B)$.

\paragraph{Synthetic Array Model.}
For the controlled experiment, we use a uniform linear array with ($M=6$)
microphones, inter-microphone spacing
$d_{\mathrm{mic}}=0.04 \rm m$, sound speed
$c=343 \rm m/\rm s$, and frequency
$f_0=1000 \rm{Hz}$ \cite{barker2015chime3,barker2017chime3analysis}. 
The $m$-th entry of the steering vector is
\[
    [a(\theta)]_m
    =
    \exp\left(
    -2\pi\mathrm{i} f_0
    \frac{(m-1)d_{\mathrm{mic}}\sin(\theta)}{c}
    \right),
    \qquad m=1,\ldots,M.
\]
The target source is placed at $\theta_s=20^\circ$, while two interfering
sources are placed at $\theta_1=-50^\circ$ 
and $\theta_2=65^\circ$. The snapshots are generated according to
\[
    s_t=a(\theta_s)z_t+\varepsilon_t,
    \quad \text{and} \quad 
    n_t
    =
    a(\theta_1)u_{1,t}
    +
    a(\theta_2)u_{2,t}
    +
    \eta_t,
\]
where $z_t,u_{1,t},u_{2,t}$ are independent circular complex Gaußian
amplitudes and $\varepsilon_t,\eta_t$ model spatially white sensor noise.
Unless otherwise stated, we use $T_s=T_n=4000$ snapshots.

\paragraph{Reference Solution and Reported Quantities.}
For this moderate-dimensional experiment, 
an offline reference is obtained, due to \eqref{eq: mat snr} and \eqref{eq: mat A B snr},
from $R_s w = \lambda \left(R_n+\delta I_M\right)w.$
The largest generalized eigenvalue
equals the optimal regularized squared quotient. 
For an approximate beamformer $\widehat w \in \bb C^M$, we report
\[
    \operatorname{SNR}_{\delta,\rm{dB}}(\widehat w)
    =
    10\log_{10} f_\delta(\widehat w).
\]

\paragraph{Beampattern visualization.}
Each final beamformer $w \in \bb C^M$ is normalized $\widetilde w$ to have unit response in the target
direction,
i.e. $\widetilde w \coloneqq \frac{w}{\overline{w^{*}a(\theta_s)}}$ 
such that $\widetilde w^{*}a(\theta_s)=1$.
The normalized beampattern is
\[
    G_{\widetilde w}(\theta)
    =
    20\log_{10}
    \left|
    \widetilde w^{*}a(\theta)
    \right|,
    \qquad
    \theta\in[-90^\circ,90^\circ].
\]
For visualization, the response is truncated below $G_{\min}=-60\,\rm{dB}$. 
Since all curves equal $0\,\rm{dB}$ in the target direction, 
their values at the two interference directions
directly measure spatial suppression.

\begin{figure}[t]
\begin{minipage}{0.50\linewidth}
    \resizebox{\linewidth}{!}{%
    \begin{tabular}{c}
        \includegraphics[width=1.5\linewidth, clip=true, trim=7pt 55pt 190pt 110pt]{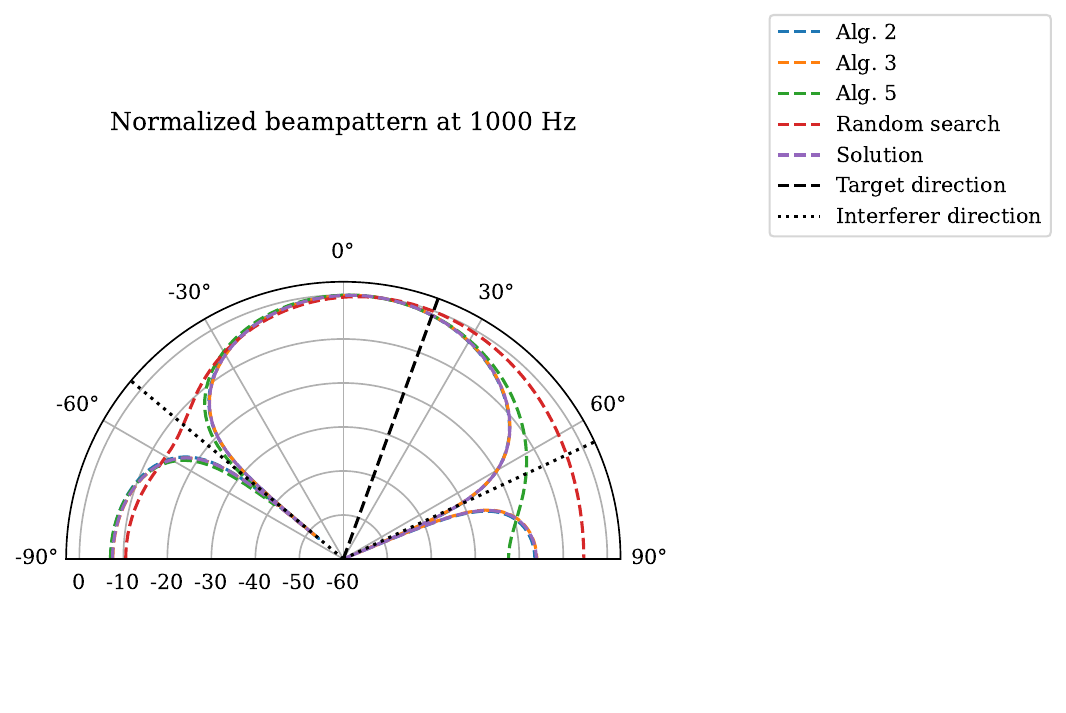} \\
        dB
    \end{tabular}}
    \includegraphics[width=0.48\linewidth, clip=true, trim=380pt 275pt 10pt 10pt]{plots/snr_beampattern_comparison_n6.pdf}%
    \includegraphics[width=0.48\linewidth, clip=true, trim=380pt 230pt 10pt 70pt]{plots/snr_beampattern_comparison_n6.pdf}
\caption{Normalized polar beampatterns at $f_0=1000$ (hertz).
All beamformers are normalized to unit response at the target
direction ($\theta_s=20^\circ$). The dotted radial lines mark the
interference directions ($\theta_1=-50^\circ$) and
($\theta_2=65^\circ$).}
\label{fig:snr-beampattern}
\end{minipage}%
\hfill
\begin{minipage}{0.44\linewidth}
    \resizebox{\linewidth}{!}{%
    \begin{tabular}{c c}
        \rotatebox{90}{\hspace{40pt}SNR$_{\delta, \rm{dB}, k}$} 
        & \includegraphics[width=1.5\linewidth, clip=true, trim=160pt 285pt 220pt 80pt]{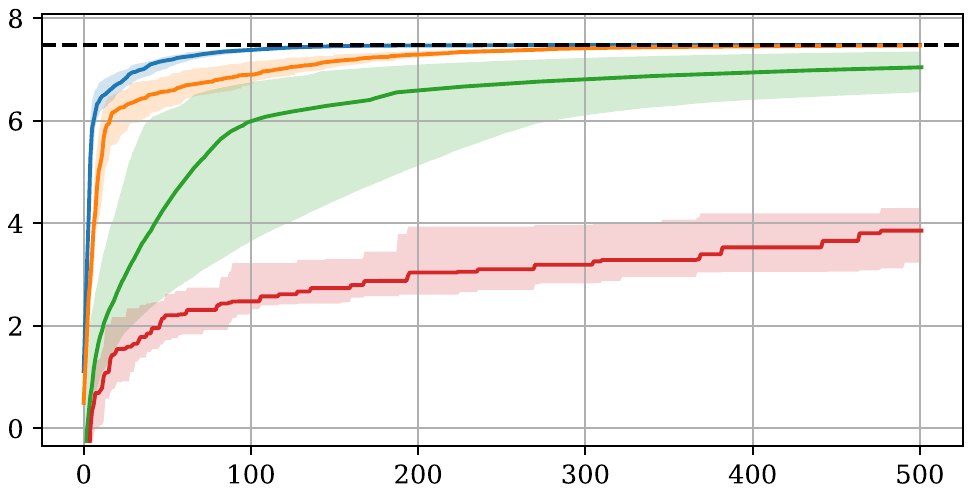}
        \\
        & Iterations $k$
    \end{tabular}}
    \resizebox{\linewidth}{!}{%
    \begin{tabular}{c c c c c c}
        \rotatebox{90}{\hspace{40pt}SNR$_{\delta, \rm{dB}}$} 
        & \multicolumn{5}{c}{\includegraphics[width=1.5\linewidth, clip=true, trim=0pt 0pt 0pt 0pt]{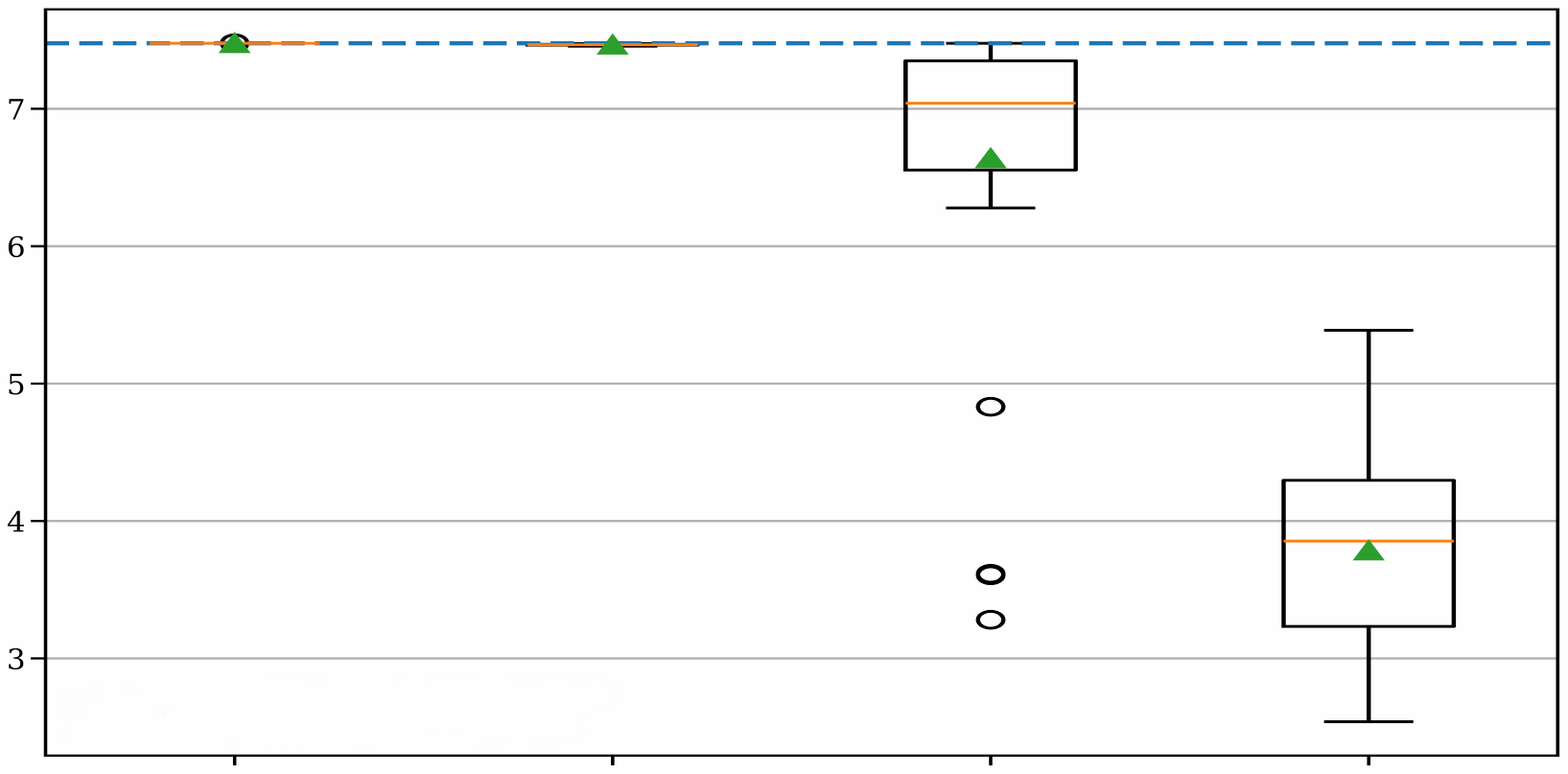}} 
        \\
        & \hspace{6pt} & \hspace{7pt} Alg.~2 \hspace{7pt}
        & \hspace{7pt} Alg.~3 \hspace{7pt}
        & \hspace{7pt} Alg.~5 \hspace{7pt}
        & Rand. search
    \end{tabular}}
    \caption{
    \textit{Top}: Regularized output SNR over iterations $k$ and $50$ independent runs.
    \textit{Bottom}: Distribution of the final regularized output SNR
    over $50$ independent runs. 
    The center line of each box
    is the median, the box contains the interquartile range.
    }
    \label{fig:snr-distribution}
\end{minipage}
\label{fig:snr}
\end{figure}

\paragraph{Distribution over Randomized Runs.}
All randomized algorithms are evaluated over $50$ independent runs. 
For run ($r = 1,...,R$) of method ($\rm{m}$), let
\[
    q_{\rm{m},r}
    \coloneqq
    10\log_{10}
    f_\delta\bigl(\widehat w_{\rm{m}}^{(r)}\bigr)
\]
be the final regularized output SNR in decibels. We summarize the distribution
using the median $\operatorname{med}_{\rm{m}} \coloneqq \operatorname{median}_{1\leq r\leq R}q_{\rm{m},r}$,
the first $Q_{1,\rm{m}} \coloneqq Q_{0.25}\left(\{q_{\rm{m},r}\}_{r=1}^{R}\right)$
and third quartiles $Q_{3,\rm{m}} \coloneqq Q_{0.75}\left(\{q_{\rm{m},r}\}_{r=1}^{R}\right)$,
and the interquartile range $\operatorname{IQR}_{\rm{m}}=Q_{3,\rm{m}}-Q_{1,\rm{m}}$.
The box plots in Figure~\ref{fig:snr-distribution} display the median,
interquartile range, and outlying runs.

We compare the proposed methods Alg.~\ref{alg:cmgRq} and~\ref{alg:czmgRq},
with the zeroth-order Riemannian gradient methods from the literature \cite{li2023stochastic},
cf.~Alg.~\ref{alg: zeroth order}, and a random search.
Since we have $\vv w \in \bb R^{12}$,
we use $q = 8$ and $500$ iterations for all computations 
and the setting for \cite{li2023stochastic} as before.

We observe that Alg.~\ref{alg:cmgRq} 
converges fastest regarding the SNR in Fig.~\ref{fig:snr-distribution}
with the smallest errors regarding the interquartile range over iterations.
Due to the lower-dimensional setting,
Alg.~\ref{alg:czmgRq} cannot develop the advantages from the curvature information
as observed in §~\ref{sec: num-second}.
Hence, it performs slightly worse than Alg.~\ref{alg:cmgRq}.
The zeroth-order Riemannian gradient method 
requires many more iterations to reach at least with the interquartile range the optimal value.
From Fig.~\ref{fig:snr-distribution}, there are some outliers
due to sensitivity to the theoretically prescribed parameter choice.
The na\"{\i}v approach by a random search is considered;
however, as expected, it is not successful.
Furthermore, from Fig.~\ref{fig:snr-beampattern}
Alg.~\ref{alg:cmgRq} closely reproduces the
reference main lobe while creating deep attenuation in the
interference directions.
The computation time is comparable to Fig.~\ref{fig:exp_xt}~(left).

\section{Conclusions}

In this paper,
we proposed a generalization 
for the zeroth-order method for the generalized Rayleigh quotient. 
The sampling technique on the tangent space 
is carried onto the entire sphere, and linked as an unbiased estimator of the true Riemannian gradient.
Each sampled update is solved exactly through a
$2\times2$ generalized eigenvalue problem,
which is efficiently solved by a closed form.
Furthermore, an unbiased estimator for the Riemannian Hessian is defined
and a novel, quasi-Newton like algorithm 
for maximizing the quotient of quadratic functions is provided. 
The almost sure convergence of the functional value to the largest generalized eigenvalue 
and the leading generalized eigenspace for the iterates in distance is proven.
The numerics substantiate the theoretical findings
by proof-of-concept results,
quantitative and qualitative numerical comparison with state-of-the-art methods from the literature,
and first real-world inspired examples 
extending the approach to the complex setting.

The considered method enables to get 
the smallest generalized Rayleigh quotient by changing the sign in Thm.~\ref{thm: svd}. 
Furthermore, the general form of the quotient of quadratic functions 
could be used in future research for translating the sampling technique 
and
architecture of the optimization algorithm
to more advanced and general optimization problems
by iterative re-weighted least-squares techniques.
The benefit of the curvature information is presently supported
only by the numerical experiments. 
A more detailed variance and convergence analysis of this estimator 
is left for future work.

\paragraph{Acknowledgement}
The author thanks  
O.~Melnyk, M.~Schoen, and G.~Steidl
for fruitful discussions
during the research for \cite{B9}.

\printbibliography

\appendix

\section{Algorithms}\label{sec: alg} 

\begin{algorithm}[H]
    \caption{First-Order Riemannian Gradient Ascent Method}
    \label{alg: first order}
    \begin{algorithmic}
        \State{\textbf{Given} $A \in \bb R^{m \times d}$ and $B \in \bb R^{\ell \times d}$ such that $\rank(B) = d$.}
        \State{\textbf{Initialize} with $v^0 \in \bb S^{d-1}$.}
        \For{$k = 1,2,3,...$}
        \State{\textbf{Calculate} $x^k = \nabla f(v^k)$,
        \textbf{select} step size $\tau_k > 0$,
        \textbf{update} $v^{k+1} = R_{v^k}(\tau_k x^k)$}
        \EndFor
        \State{\textbf{Return} $\norm{A/B}^2 \approx \tfrac{\norm{A v^k}^2}{\norm{B v^k}^2}$.}
    \end{algorithmic}
\end{algorithm}

For the first-order Riemannian ascent method,
the convergence to critical points for solving \eqref{eq: prob sphere}
can be guaranteed in a sub-linear rate \cite{Lee2012smoothmanifolds}.
Meaning that 
\begin{equation}
    \label{eq: sublinear}
    \min_{k \in \{0,...,n\}} \norm{\grad f(v^k)}^2 \in \mc O(\tfrac{1}{n})
\end{equation}
if $\tau_k = \nicefrac{1}{L}$ and $L \geq \norm{A^{\tT}A}(1 + \kappa(B^{\tT}B))$
where $\kappa(B^{\tT}B) = \lambda_1(B^{\tT}B) / \lambda_{d}(B^{\tT}B)$
is the condition number of $B^{\tT}B \in \mc S_+^d$.
If, additionally, $A^{\tT}A \succ 0$,
i.e. $\rank(A) = d$,
or equivalently $\ker(A) = \{0\}$,
then the linear convergence to the global maximizer can be ensured \cite{alimisis2021distributed}.

\begin{algorithm}
    \caption{Zeroth-Order Riemannian Gradient Ascent Method}
    \label{alg: zeroth order}
    \begin{algorithmic}
        \State{\textbf{Given} $A \in \bb R^{m \times d}$ and $B \in \bb R^{\ell \times d}$ such that $\rank(B) = d$.}
        \State{\textbf{Initialize} with $v^0 \in \bb S^{d-1}$ for instance $v^0 = x / \norm{x}$ for $x \sim \mc N(0, I_d)$.}
        \For{$k = 1,2,3,...$}
        \State{\textbf{Sample} $x_i \overset{\text{iid.}}{\sim} \mc N(0, I_d)$ for some $m \in \bb N$.}
        \State{\textbf{Construct} $x^k = \widehat{\grad}_m f(v^k)$ and $\mu_k > 0$ 
        as in §~\ref{sec: zeroth order}.}
        \State{\textbf{Select} appropriate step size $\tau_k > 0$ and
        \textbf{update} $v^{k+1} = R_{v^k}(\tau_k x^k)$.}
        \EndFor
        \State{\textbf{Return} $\norm{A/B}^2 \approx \tfrac{\norm{A v^k}^2}{\norm{B v^k}^2}$.}
    \end{algorithmic}
\end{algorithm}

For the zeroth-order method,
taking $\tau_k = \nicefrac{1}{2(d+4)L}$
and a square summable but not summable scaling sequence,
i.e. $(\mu_k)_{k \in \bb N} \subset \bb R_{> 0}$ 
such that $\sum_{k = 1}^\infty \mu_k^2 < \infty  = \sum_{k = 1}^\infty \mu_k$,
ensures the sublinear convergence rate to critical points \cite{li2023stochastic}.
Notably, 
where there is no guarantee for the convergence to global maximizer at all,
the recent proposed methods \cite{B8,B9,B12}
provably converge in a sublinear manner as for the first-order method \eqref{eq: sublinear}
especially to the global maximizers.
Here, for the generalized sampling procedure, cf.~Alg.~\ref{alg: main} and ~\ref{alg:czmgRq},
no convergence rate can be proven due to the missing orthogonal construction 
of the step direction, cf.~\eqref{eq: x^k} versus~\eqref{eq: tangent surrogate},
and the more complicated projective update scheme.

\section{Proofs}\label{sec: app}

\noindent\textbf{Lemma~\ref{lem: lipschitz func}}
    The objective from \eqref{eq: prob sphere}, i.e. \eqref{eq: obj}, is
    \smash{$2\norm{A}^2\left(
        \tfrac{1}{\lambda_d(B^{\tT}B)}
        +\tfrac{\norm{B}^2}{\lambda_d(B^{\tT}B)^2}
        \right)$}-Lipschitz continuous on $\bb S^{d-1}$.
\begin{proof}[Proof of Lem.~\ref{lem: lipschitz func}]
    For $u \in\bb S^{d-1}$, write
    $a(u) \coloneqq \|A u\|^2 =\langle u,A^{\tT}Au\rangle$ 
    and
    $d(u) \coloneqq \|B u\|^2=\langle u,B^{\tT}Bu\rangle$. 
    Then
    \begin{align*}
       |f(v) - f(w)|
       &\leq \frac{|a(v) - a(w)|}{d(v)} + |a(w)| \cdot \bigl|\tfrac{1}{d(v)} - \tfrac{1}{d(w)}\bigr| \\
       &\leq \tfrac{2\|A\|^2}{\lambda_d(B^{\tT} B)} \|v - w\| + \|A\|^2 \tfrac{|d(v) - d(w)|}{\lambda_d(B^{\tT} B)^2} \\
       &\leq 2 \|A\|^2 \left(\tfrac{1}{\lambda_d(B^{\tT} B)} + \tfrac{\|B\|^2}{\lambda_d(B^{\tT} B)^2}\right) \|v - w\|,
       \quad \forall \, v,w \in \bb S^{d-1}.
       \tag*{\qedhere}
    \end{align*}
\end{proof}

\noindent\textbf{Lemma~\ref{lem: lipschitz grad}}
    The (extended) gradient 
    of the objective from \eqref{eq: prob sphere}, i.e. \eqref{eq: obj},
    is \smash{$\tfrac{20}{\lambda_{d}(B^{\tT}B)^4} \|A\|^2\|B\|^6$}-Lipschitz continuous 
    on 
    $\bb S^{d-1}$. 
\begin{proof}[Proof of Lem.~\ref{lem: lipschitz grad}]
    By \eqref{eq: riemannian gradient}~f. and \eqref{eq: euclidean grad},
    we obtain 
    \begin{align*}
        &\hspace{-20pt}\|\grad f(v) - \grad f(w)\|
        = \|\nabla f(v) - \nabla f(w)\|\\
        & = 2\left\|\frac{\norm{B v}^2 A^{\tT}A v - \norm{A v}^2 B^{\tT}B v}{\norm{B v}^4}
        - \frac{\norm{B w}^2 A^{\tT}A w - \norm{A w}^2 B^{\tT}B w}{\norm{B w}^4}\right\| \\
        & = 2\biggl\|\frac{\norm{B w}^4\norm{B v}^2 A^{\tT}A v - \norm{A v}^2 \norm{B w}^4B^{\tT}B v}{\norm{B v}^4\norm{B w}^4} \\
        & \qquad\qquad - \frac{\norm{B v}^4\norm{B w}^2 A^{\tT}A w - \norm{A w}^2 \norm{B v}^4B^{\tT}B w}{\norm{B v}^4\norm{B w}^4}\biggr\| \\
        & \leq \tfrac{2}{\lambda_{d}(B^{\tT}B)^4} 
        \Bigl[\norm{B v}^2 \norm{B w}^4 \|A^{\tT}A(v - w)\| 
        + \norm{A v}^2 \norm{B w}^4 \|B^{\tT}B(v - w)\| \\
        &\qquad\qquad\qquad\quad + \bigl|\norm{B v}^2 \norm{B w}^4 - \norm{B v}^4 \norm{B w}^2\bigr| \norm{A^{\tT}A w} \\
        &\qquad\qquad\qquad\quad + \bigl|\norm{A v}^2 \norm{B w}^4 - \norm{A w}^2 \norm{B v}^4\bigr| \norm{B^{\tT}B w}\Bigl] \\
        & \leq \tfrac{20}{\lambda_{d}(B^{\tT}B)^4}
        \|A\|^2\|B\|^6
        \|v - w\|,
        \qquad\qquad v, w \in \bb S^{d-1}
    \end{align*}
    since 
    \begin{align*}
        \bigl|\norm{B v}^2 \norm{B w}^4 - \norm{B v}^4 \norm{B w}^2\bigr|
        & = \norm{B v}^2 \norm{B w}^2 \underbracket{\bigl|\norm{B w}^2 - \norm{B v}^2\bigr|}_{\leq 2 \norm{B}^2 \norm{v - w}} 
        \leq 2 \norm{B}^6 \norm{v - w},
    \end{align*}
    for any $v, w \in \bb S^{d-1}$ and 
    \begin{align*}
        &\bigl|\norm{A v}^2 \norm{B w}^4 - \norm{A w}^2 \norm{B v}^4\bigr|
        \leq \norm{A v}^2 \bigl|\norm{B v}^4 - \norm{B w}^4\bigr|
        + \norm{B v}^4\bigl|\norm{A v}^2 - \norm{A w}^2\bigr| \\
        &\qquad \leq \norm{A v}^2\underbracket{\bigl|\norm{B v}^2 + \norm{B w}^2\bigr|}_{\leq 2 \|B\|^2}
        \cdot \underbracket{\bigl|\norm{B v}^2 - \norm{B w}^2\bigr|}_{\leq 2 \|B\|^2 \|v - w\|} 
        + \norm{B v}^4\underbracket{\bigl|\norm{A v}^2 - \norm{A w}^2\bigr|}_{\leq \|A\|^2\|v - w\|} \\
        &\qquad\leq 6 \|A\|^2 \|B\|^4 \norm{v - w},
    \end{align*}
    for any $v, w \in \bb S^{d-1}$.
\end{proof}

\noindent\textbf{Proposition~\ref{prop: as conv eigenspace}}
    Let $\dim(G_{\lambda_1}) < d-1$
    and $(v^k)_{k \in \bb N}$ be the sequence of random variables 
    generated by Alg.~\ref{alg: main}.
    Then there exists a subsequence $(v^{k_j})_{j \in \bb N}$
    and a random variable $\lambda$ a.s. taking values in $\spec((B^{\tT}B)^{-1/2}A^{\tT}A(B^{\tT}B)^{-1/2})$
    such that 
    $\dist(G_\lambda, v^{k_j}) \to 0$ for $j \to \infty$ a.s.
    Moreover,
    the sequence $(\tfrac{a_k}{d_k})_{k \in \bb N}$
    converges to $\lambda$ a.s.
\begin{proof}[Proof of Prop.~\ref{prop: as conv eigenspace}]
    Conclude by the $L^1$-convergence from Cor.~\ref{lem: EE_a_k}
    and the second statement from Cor.~\ref{lem: EE_a_k}
    using the law of total expectation that 
    \begin{equation*}
        \bb E[(\tfrac{\alpha_k}{d_k})^2]
        = \bb E_{v^k \sim V^k}[\bb E_{x^k \sim \mc U(\bb S^{d-1}) \mid V^k = v^k} [(\tfrac{\alpha_k}{d_k})^2]]
        = \tfrac{1}{4d}  \bb E_{v^k \sim V^k}[d_k^2 \norm{\grad f(v^k)}^2] \to 0,
    \end{equation*}
    as $k \to \infty$.
    Hence,
    there exists a subsequence $(v^{k_j})_{j \in \bb N} \subset \bb S^{d-1}$
    such that 
    \begin{equation*}
        d_{k_j}\norm{\grad f(v^{k_j})} \to 0,
        \quad \text{as} \quad j \to \infty,
        \quad \text{a.s.}
    \end{equation*}
    Due to the boundedness $0 < \lambda_d(B^{\tT}B) \leq d_k \leq \lambda_1(B^{\tT}B)$,
    we obtain 
    \begin{equation*}
        \norm{\grad f(v^{k_j})} \to 0,
        \quad \text{as} \quad j \to \infty,
        \quad \text{a.s.}
    \end{equation*}
    Hence, by construction, cf.~\ref{eq: eigen},
    there exists a sequence of eigenvalues 
    $$
        \lambda_j \in \spec((B^{\tT}B)^{-1/2}A^{\tT}A (B^{\tT}B)^{-1/2}),
        \qquad j \in \bb N,
    $$
    such that 
    \begin{equation*}
        \norm{(A^{\tT}A - \lambda_j B^{\tT}B) v^{k_j}} \to 0,
        \quad \text{as} \quad 
        j \to \infty.
    \end{equation*}
    Therefore,
    we have 
    \begin{equation*}
        a_{k_j}
        = \scp{v^{k_j}}{A^{\tT}A v^{k_j}}
        = \underbracket{\scp{v^{k_j}}{(A^{\tT}A - \lambda_j B^{\tT}B) v^{k_j}}}_{\to 0, \quad j \to \infty} + \underbracket{\scp{v^{k_j}}{\lambda_j B^{\tT}B v^{k_j}}}_{ = \lambda_j d_{k_j}}
    \end{equation*}
    such that by Cor.~\ref{cor: monoton conv} we obtain
    \begin{align*}
        \lim_{k \to \infty} f(v^k)
        &= \lim_{j \to \infty} f(v^{k_j})
        = \lim_{k \to \infty} \frac{a_{k}}{d_{k}} \\
        &= \lim_{j \to \infty} \frac{a_{k_j}}{d_{k_j}}
        = \lim_{j \to \infty} \lambda_j
        =: \lambda \in \spec((B^{\tT}B)^{-1/2}A^{\tT}A (B^{\tT}B)^{-1/2}).
    \end{align*}
    Since $\bb S^{d-1}$ is compact,
    there exists a convergent subsequence $(v^{k_{j_\ell}})_{\ell \in \bb N} \subset \bb S^{d-1}$
    with limit $v \in \bb S^{d-1}$ such that 
    \begin{equation*}
        \lim_{\ell \to \infty} \norm{(A^{\tT}A - \lambda_{j_\ell} B^{\tT}B) v^{k_{j_\ell}}}
        = \norm{(A^{\tT}A - \lambda B^{\tT}B) v} = 0
    \end{equation*}
    and hence $(v, \lambda)$ is a generalized eigen pair of $(A^{\tT}A, B^{\tT}B)$.
    Finally, we conclude by 
    \begin{equation*}
        \norm{(A^{\tT}A - \lambda B^{\tT}B) v^{k_j}}
        \leq \norm{(A^{\tT}A - \lambda_j B^{\tT}B) v^{k_j}} + |\lambda - \lambda_j| \cdot \norm{B^{\tT}B v^{k_j}}
        \to 0, \quad \text{as} \quad j \to \infty,
    \end{equation*}
    such that $\dist(G_\lambda, v^{k_j}) \to 0$ as $j \to \infty$ a.s.
\end{proof}

\noindent\textbf{Lemma~\ref{lem: uniform lower bound}}
    Let $v, \tilde v \in \bb S^{d-1}$ and $\varepsilon > 0$. 
    Define $\bb B_\varepsilon(\tilde v) \coloneq \{x \in \bb R^d \mid \; \norm{\tilde v - x} < \varepsilon\}$ and
    \begin{align*}
        \mc D_v \coloneqq \left\{ x \in \bb S^{d-1} 
        \;\middle|\; 
        \exists \tau \in \bb R, 
        R_v(\tau x) 
        = \tfrac{v + \tau x}{\norm{v + \tau x}} \in \bb B_\varepsilon(\tilde v) \cup \bb B_\varepsilon(-\tilde v)\right\}.
    \end{align*}
    There exists some $p_{\varepsilon,d} > 0$ 
    such that $\inf_{v \in \bb S^{d-1}} \sigma_{d-1}(\mc D_v) > p_{\varepsilon,d}$.
\begin{proof}[Proof of Lem.~\ref{lem: uniform lower bound}]
    W.l.o.g assume that $\scp{v}{\tilde v} \geq 0$, 
    otherwise consider $-\tilde v$.
    Hence it holds $\norm{v - \tilde v} \leq \sqrt{2}$.
    Furthermore, if $\norm{v - \tilde v} < \varepsilon$, it suffices for any $x \in \bb S^{d-1}$
    to take $\tau = 0$ such that $\sigma_{d-1}(\mc D_v) = 1$.
    Now, 
    since for any $y \in \bb S^{d-1}$ and $w \in \bb R^d\setminus\{0\}$ holds that  
    \begin{align}
        \label{eq: uniform lower bound}
        \norm[\Big]{\tfrac{w}{\norm{w}} - y}
        &\leq \norm[\Big]{\tfrac{w}{\norm{w}} - w} + \norm{w - y}
        = \abs[\Big]{\tfrac{1}{\norm{w}} - 1} \norm{w} + \norm{w - y} \notag \\
        &= \abs[\big]{1 - \norm{w}} + \norm{w - y} 
        = \abs[\big]{\norm{y} - \norm{w}} + \norm{w - y} 
        \leq 2 \norm{w - y},
    \end{align}
    it suffices to take 
    \begin{equation*}
        \tilde{\mc D}_v \coloneqq \{x \in \bb S^{d-1} \mid \exists \tau \in \bb R, v + \tau x \in \bb B_{\nicefrac{\varepsilon}{2}}(\tilde v) \cup \bb B_{\nicefrac{\varepsilon}{2}}(-\tilde v)\}
        \subset \mc D_v.
    \end{equation*}
    As the latter can be trivially rewritten 
    \[
        \tilde{\mc D}_v = \{x \in \bb S^{d-1} \mid \exists \tau \in \bb R, \tau x \in \bb B_{\nicefrac{\varepsilon}{2}}(\tilde v - v) \cup \bb B_{\nicefrac{\varepsilon}{2}}(v+\tilde v)\}
    \]
    it includes a \textit{hyperspherical cap} $\mc K_\varepsilon \subset \tilde{\mc D}_v$ \cite{li2011concise} with opening angle 
    \[
        \tfrac{1}{2} \geq \sin(\alpha_\varepsilon) = \frac{\nicefrac{\varepsilon}{2}}{\norm{v - \tilde v}} \geq \tfrac{\varepsilon}{2\sqrt{2}} =: \sin(\beta_\varepsilon).
    \]
    An other hyperspherical cap $\tilde{\mc K}_\varepsilon \subset \mc K_\varepsilon$ is included
    defined by the opening angle $\beta_\varepsilon$.
    The surface area is given by the \textit{incomplete Beta-function} \cite{li2011concise}
    such that by the monotonicity holds
    \begin{align*}
        \sigma_{d-1}(\mc D_v)
        &\geq \sigma_{d-1}(\tilde{\mc D}_v)
        \geq \sigma_{d-1}(\mc K_\varepsilon)
        \geq \sigma_{d-1}(\tilde{\mc K}_\varepsilon)
        = \mathrm{B}(\sin(\beta_\varepsilon), \tfrac{d-1}{2}, \tfrac{1}{2}) \\
        &= \int_0^{\sin(\beta_\varepsilon)} t^{\tfrac{d-3}{2}}(1 - t)^{-\tfrac{1}{2}} \; \mathrm{d} t 
        \geq \int_0^{\sin(\beta_\varepsilon)} t^{\tfrac{d-3}{2}} \; \mathrm{d} t 
        = \tfrac{2}{d-1}\bigl(\tfrac{\varepsilon}{2\sqrt{2}}\bigr)^{\tfrac{d-1}{2}}
        \eqqcolon p_{\varepsilon,d},
    \end{align*}
    using that $(1-t)^{-1/2}\geq 1$ for $0\leq t\leq 1$
    in the last inequality.
\end{proof}

\noindent\textbf{Proposition~\ref{prop: rie hess}}
    Let $C \in \bb R^{d \times d}$
    and $\symC \coloneqq \tfrac{1}{2}(C + C^{\tT})$ denotes the Hermitian part of $C$.
    It holds
    \begin{equation*}
        \bb E_{x \sim \mc U(\bb S^{d-1})}[\scp{x}{Ax}] = \frac{\trace(\symA)}{d} 
        \quad\text{and}\quad
        \bb E_{x \sim \mc U(\bb S^{d-1})}[\scp{x}{Ax} xx^{\tT}] = \frac{2\symA + \trace(A)I_d}{d(d+2)}.
    \end{equation*}
\begin{proof}[Proof of Prop.~\ref{prop: rie hess}]
    For the first assertion,
    it holds 
    \begin{equation}
        \label{eq: symA inner}
        \scp{x}{Ax}
        = \frac{1}{2}(\scp{x}{Ax} + \langle A^{\tT}x, x\rangle)
        = \langle x, \underbracket{\tfrac{1}{2}(A + A^{\tT})}_{=: \symA} x\rangle
        = \scp{x}{\symA x}
        = \trace(\symA xx^{\tT}).
    \end{equation}
    Hence, 
    by the linearity of the trace,
    we have 
    \begin{equation*}
        \bb E_{x \sim \mc U(\bb S^{d-1})}[\scp{x}{Ax}]
        = \bb E_{x \sim \mc U(\bb S^{d-1})}[\trace(\symA xx^{\tT})]
        = \trace(\symA \bb E_{x \sim \mc U(\bb S^{d-1})}[xx^{\tT}])
        = \frac{1}{d} \trace(\symA),
    \end{equation*}
    using \eqref{eq: x^k}~f.
    For the second assertion 
    by utilizing that the mapping 
    \begin{equation*}
        \mc M : \mc S^d \to \mc S^d,
        \quad 
        C \mapsto \bb E_{x \sim \mc U(\bb S^{d-1})}[\scp{x}{Cx}xx^{\tT}]
    \end{equation*}
    is orthogonal equivariant, 
    which means that 
    \begin{align*}
        \mc M(Q^{\tT} C Q)
        &= \bb E_{x \sim \mc U(\bb S^{d-1})}[\scp{x}{Q^{\tT} C Q x} x x^{\tT}] 
        = \bb E_{x \sim \mc U(\bb S^{d-1})}[\scp{Q x}{C Q x} Q^{\tT}Q x (Q^{\tT}Q x)^{\tT}] \\
        &= \bb E_{x \sim \mc U(\bb S^{d-1})}[\scp{x}{C x} Q^{\tT} x x^{\tT} Q] 
        = Q^{\tT} \bb E_{x \sim \mc U(\bb S^{d-1})}[\scp{x}{C x} x x^{\tT}] Q \\
        &= Q^{\tT} \mc M(C) Q,
    \end{align*}
    holds by Schur's lemma \cite[§~1.2]{fulton2004representation} 
    and by the decomposition 
    of the set of Hermitian matrices 
    \begin{equation*}
        \mc S^d = \spa(I_d) \oplus \{C \in \mc S^d : \trace(C) = 0\}
    \end{equation*}
    that some parameters $\varkappa_1, \varkappa_2 \in \bb R$ exist
    such that
    \begin{equation*}
        \mc M(C) = \varkappa_1 C + \varkappa_2 \trace(C) I_d.
    \end{equation*}
    Furthermore,
    by Isserlis--Wick's theorem \cite{Isserlis1918,Wick1950}
    holds, by \eqref{eq: x^k}~f., with 
    \begin{equation*}
        \mc M(I_d) 
        = \bb E_{x \sim \mc U(\bb S^{d-1})}[xx^{\tT}] 
        = \tfrac{1}{d} I_d 
        = \varkappa_1 I_d + \varkappa_2 d I_d
    \end{equation*}
    and $\mc M(e_1e_1^{\tT}) = \bb E_{x \sim \mc U(\bb S^{d-1})}[x_1^2 x x^{\tT}]$
    from the $(1,1)$- and $(1,2)$-entry 
    that
    \begin{equation*}
        \bb E_{x \sim \mc U(\bb S^{d-1})}[x_1^4] 
        = \frac{3}{d(d+2)},
        \quad \text{and} \quad 
        \bb E_{x \sim \mc U(\bb S^{d-1})}[x_1^2 x_2^2] = \frac{1}{d(d+2)}.
    \end{equation*}
    Hence,
    by symmetry reasons holds 
        $\mc M(e_1 e_1^{\tT})
        = \tfrac{1}{d(d+2)} 
        \diag(3,1,..,1)
        = \varkappa_1 e_1 e_1^{\tT} + \varkappa_2 I_d$.
    Comparing the first two diagonal entries 
    in the latter equation 
    yields 
    \begin{equation*}
        \varkappa_1 + \varkappa_2 = \frac{3}{d(d+2)},
        \quad \varkappa_2 = \frac{1}{d(d+2)},
        \quad\text{such that}\quad
        \varkappa_1 = \frac{2}{d(d+2)},
        \quad \varkappa_2 = \frac{1}{d(d+2)}.
    \end{equation*}
    Applying all this onto $\symA$ due to \eqref{eq: symA inner}
    yields the assertion.
\end{proof}

\noindent\textbf{Corollary~\ref{cor: riemannian hessian estimator}}
    Let $A \in \bb R^{m \times d}$ and $B \in \bb R^{\ell \times d}$
    It holds 
    \begin{align*}
        \bb E_{x \sim \mc U(\bb S^{d-1})}
        \Bigl[\bigl(\norm{A x}^2 - f(v)\norm{B x}^2\bigr)(xx^{\tT} - \tfrac{1}{d+2} I_d)\Bigr] 
        = \frac{2}{d(d+2)} H(v).
    \end{align*}
    and 
    \begin{align*}
        \bb E_{y \sim \mc U(\bb T_v \cap \bb S^{d-1})}
        \Bigl[\bigl(\norm{A y}^2 - f(v)\norm{B y}^2\bigr)(yy^{\tT} - \tfrac{1}{d+1} P_v)\Bigr] 
        &= \frac{\norm{B v}^2}{(d-1)(d+1)} \hess f(v).
    \end{align*}
\begin{proof}[Proof of Cor.~\ref{cor: riemannian hessian estimator}]
    The first assertion is a direct consequence of Prop.~\ref{prop: rie hess}.
    For the second assertion,
    utilize \eqref{eq: tangent surrogate} 
    with $\bb E_{y \sim \mc U(\bb T_v \cap \bb S^{d-1})}[yy^{\tT}] 
    = \tfrac{1}{d-1}(I_d - vv^{\tT}) 
    = \tfrac{1}{d-1} P_v$ \cite[Lem.~4.1]{B9}
    and for some $C \in \bb R^{d \times d}$ that holds
    \begin{equation*}
        \bb E_{y \sim \mc U(\bb T_v \cap \bb S^{d-1})}[\scp{y}{Cy}] 
        = \tfrac{1}{d-1}\trace(P_v \symC P_v)
    \end{equation*}
    according to Prop.~\ref{prop: rie hess}.
    Furthermore holds 
    \begin{equation*}
        \bb E_{y \sim \mc U(\bb T_v \cap \bb S^{d-1})}[\scp{y}{Cy}yy^{\tT}]
        = \tfrac{1}{(d-1)(d+1)}\bigl(2 P_v \symC P_v + \trace(P_v \symC P_v) P_v\bigr).
    \end{equation*}
    Plugging in $C = H(v) = (A^{\tT}A - f(v) B^{\tT}B) \in \mc S^d$ from \eqref{eq: riemannian hess special}
    yields the assertion. 
\end{proof}

\end{document}